\newcommand*{\rn}[1]{%
  \expandafter\@rn\csname c@#1\endcsname%
}
\newcommand*{\@rn}[1]{%
  $\ifcase#1\or(i)\or(ii)\or(iii)\or(iv)\or(v)\or(vi)\or(vii)\or(viii)\or(ix)\or(x)%
    \else\@ctrerr\fi$%
}
\AddEnumerateCounter{\rn}{\@rn}{53.13}
  \newbox\gnBoxA
\newdimen\gnCornerHgt
\newdimen\gnArgHgt
\def\Godelnum #1{%
\setbox\gnBoxA=\hbox{$#1$}%
\gnArgHgt=\ht\gnBoxA%
\ifnum     \gnArgHgt<\gnCornerHgt \gnArgHgt=0pt%
\else \advance \gnArgHgt by -\gnCornerHgt%
\fi \raise\gnArgHgt\hbox{$\ulcorner$} \box\gnBoxA %
\raise\gnArgHgt\hbox{$\urcorner$}}
\newcommand{\pushright}[1]{\ifmeasuring@#1\else\omit\hfill$\displaystyle#1$\fi\ignorespaces}
\newcommand{\pushleft}[1]{\ifmeasuring@#1\else\omit$\displaystyle#1$\hfill\fi\ignorespaces}
\newcommand{\PP}{\mathbb{P}}
\newcommand{\1}{\mathbbm{1}}
\newcommand{\forces}{\Vdash}
\newcommand{\res}{\upharpoonright}
\newcommand{\dotieconcat}[2]{\text{\raisebox{.8ex}{$\smallfrown$}}}
\newcommand{\QQ}{\mathbb{Q}}
\newcommand{\kapman}{\mathbb{M}_{\kappa}}
\newcommand{\jkapman}{\mathbb{M}_{j(\kappa)}}
\newcommand{\mantle}{\mathbb{M}}
\newcommand{\zerosharp}{0^\#}
\newcommand{\cof}{\operatorname{cof}}
\newcommand{\ran}{\mathrm{ran}}
\newcommand{\ZFC}{\mathrm{ZFC}}
\newcommand{\ZF}{\mathrm{ZF}}
\newcommand{\GCH}{\mathrm{GCH}}
\newtheoremstyle{nopoint}% name of the style to be used
  {}{}{\itshape}{}{\bfseries}{}{5pt}{}% Manually specify head
\theoremstyle{plain}
\newtheorem{thm}{Theorem}[section]
\newtheorem{prop}[thm]{Proposition}
\newtheorem{lemm}[thm]{Lemma}
\newtheorem{fact}[thm]{Fact}
\newtheorem{cor}[thm]{Corollary}
\newtheorem{claim}[thm]{Claim}
\theoremstyle{definition}
\newtheorem{defn}[thm]{Definition}
\newtheorem{rem}[thm]{Remark}
\newtheorem{que}[thm]{Question}
\newtheorem*{que*}{Question}
\theoremstyle{nopoint}
\newtheorem*{lemm*}{Lemma}
\newtheorem*{thm*}{Theorem}
\newtheorem*{rem*}{Remark}
\newcommand{\Col}{\mathrm{Col}}
\newcommand{\Add}{\mathrm{Add}}
\newcommand{\AC}{\mathrm{AC}}
\newcommand{\crit}{\mathrm{crit}}
\newcommand{\supp}{\mathrm{supp}}
\newcommand{\Ord}{\mathrm{Ord}}
\numberwithin{thm}{section}
\theoremstyle{plain}
\theoremstyle{definition}
\newtheorem*{defn*}{Definition}
\newcommand{\oneman}{\mantle_{\omega_1}}
\newcommand{\cohman}{\mantle_{\mathsf{C}}}
\newcommand{\succman}{\mantle_{\kappa^+}}
\newcommand{\PPbar}{\underbar{$\PP$}}
\newcommand{\Cbar}{\underbar{$\dot C$}}
\newcommand{\AxA}{\mathrm{AA}}
\newcommand{\SAxA}{\mathrm{SAA}}
\title{The Axiom of Choice in the $\kappa$-Mantle}
\author{Andreas Lietz\footnote{Institut f\"ur Mathematische Logik und Grundlagenforschung, Universit\"at M\"unster, Einsteinstrasse 62, 48149 M\"unster, FRG. }\hspace{5pt}\footnote{Current address: Institut für Diskrete Mathematik und Geometrie, TU Wien, Wiedner Hauptstrasse 8-10/104, 1040 Wien, AT\\
This paper is part of the authors PhD thesis.}}
\date{September 2023}
\begin{document}

\maketitle

\begin{abstract}
    Usuba has asked whether the $\kappa$-mantle, the intersection of all grounds that extend to $V$ via a forcing of size ${<}\kappa$, is always a model of $\ZFC$. We give a negative answers by constructing counterexamples where $\kappa$ is a Mahlo cardinal, $\kappa=\omega_1$ and where $\kappa$ is the successor of a regular uncountable cardinal.
\end{abstract}

\section{Introduction}

Set-Theoretic Geology is the study of the structure of grounds\index{Ground}, that is inner models of $\ZFC$ that extend to $V$ via forcing, and associated concepts. Motivated by the hope to uncover canonical structure hidden underneath generic sets, the mantle was born.

\begin{defn}
The mantle\index{Mantle}, denoted $\mantle$, is the intersection of all grounds.
\end{defn}

This definition only makes sense due to the uniform definability of grounds.

\begin{fact}
There is a first order $\in$-formula $\varphi(x, y)$ such that 
$$W_r=\{x\vert \varphi(x, r)\}$$
defines a ground for all $r\in V$ and all grounds are of this form. Moreover, if $\kappa$ is a cardinal and $W$ extends to $V$ via a forcing of size ${<}\kappa$ then there is $r\in V_\kappa$ with $W=W_r$.
\end{fact}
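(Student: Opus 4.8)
The plan is to recognize this as the ground-model definability theorem of Laver and, independently, Woodin, in the uniform formulation due to Fuchs--Hamkins--Reitz, and to obtain it through Hamkins's analysis of the $\delta$-approximation and $\delta$-cover properties. The two inputs I would isolate, and either cite or reprove, are: \textbf{(I)} if $V=W[G]$ where $G$ is generic over $W$ for a poset $\PP\in W$ with $|\PP|^{W}\leq\gamma$, then, putting $\delta:=(\gamma^{+})^{W}$, the pair $(W,V)$ has the $\delta$-approximation and $\delta$-cover properties, $\delta$ is a cardinal of $V$, and $W$ computes $\delta^{+}$ correctly; and \textbf{(II)} (Hamkins's uniqueness theorem) any inner model $N\models\ZFC$ that has the $\delta$-approximation and $\delta$-cover properties in $V$ and satisfies $(\delta^{+})^{N}=(\delta^{+})^{V}$ is uniquely determined by $P(\delta)^{N}$. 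Input (I) is standard. For (II) the plan is the usual rank recursion: the cover property pins down which sets of ordinals of $V$-cardinality ${<}\delta$ belong to $N$ in terms of $P(\delta)^{N}$ (every such $N$-set is, inside $N$, the image of a subset of $\delta$ under an order isomorphism in $N$), and the approximation property then lets one decide ``$x\in N$'' for an arbitrary set of ordinals $x$ by quantifying only over such small approximations; so $N\cap V_{\alpha}$ is computed by recursion on $\alpha$ from the single parameter $P(\delta)^{N}$.

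Granting (I) and (II), I would build $\varphi(x,y)$ as follows. The formula first decodes from $y$ an infinite cardinal $\gamma$ together with the set $P(\gamma)^{N}$ of a putative ground $N$; concretely $y$ codes the pair $\langle\gamma,P(\gamma)^{W}\rangle$, and from $P(\gamma)^{W}$ one recovers $H_{(\gamma^{+})^{W}}^{W}$ (hence $\delta:=(\gamma^{+})^{W}$ and $P(\delta)^{W}$) absolutely, since every element of $H_{(\gamma^{+})^{W}}^{W}$ is the transitive collapse of a well-founded extensional relation on $\gamma$ lying in $W$, and both well-foundedness and the collapse are absolute between $W$ and $V$. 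Then $\varphi(x,y)$ asserts that $x$ lies in the unique inner model $N\models\ZFC$, if one exists, that has the $\delta$-approximation and $\delta$-cover properties, satisfies $(\delta^{+})^{N}=(\delta^{+})^{V}$, and has $P(\delta)^{N}$ equal to the decoded set --- ``unique'' being legitimate by (II), and $\varphi$ literally spelling out the rank recursion from the proof of (II) so as to remain first order. For $y$ not of the decodable form, or admitting no such $N$, we set $W_{y}:=V$; since $V$ is itself a ground (via the trivial forcing), $W_{y}$ is a ground for \emph{every} $y$. Conversely, given any ground $W$, write $V=W[G]$ with $\PP\in W$, put $\gamma:=|\PP|^{W}$, and let $y$ code $\langle\gamma,P(\gamma)^{W}\rangle$; by (I) the model $N:=W$ witnesses the existential clause, so $W=W_{y}$, and thus every ground is of the form $W_{r}$.

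For the ``moreover'': if $W$ extends to $V$ via a forcing of size ${<}\kappa$, fix $\PP\in W$ witnessing this and put $\gamma:=|\PP|^{W}<\kappa$. Then $P(\gamma)^{W}\subseteq V_{\gamma+1}$, so the code $y$ of $\langle\gamma,P(\gamma)^{W}\rangle$ has rank below $\gamma+3$; since $\kappa$ is a cardinal and $\gamma<\kappa$ we have $\gamma+3<\kappa$, hence $y\in V_{\kappa}$, while $W=W_{y}$ by the previous paragraph. This is precisely the reason the parameter is built from $P(\gamma)^{W}$ rather than from $P(\delta)^{W}$ or $H_{\delta}^{W}$: in the edge case where $\gamma$ is a cardinal and $\kappa=\gamma^{+}$ (so $\delta=(\gamma^{+})^{W}=\kappa$), the sets $P(\delta)^{W}$ and $H_{\delta}^{W}$ need not lie in $V_{\kappa}$, whereas $P(\gamma)^{W}$ always does.

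The step I expect to be the main obstacle --- and the only one requiring genuine work rather than a citation --- is input (II): engineering the rank recursion so that a single $\in$-formula with only the set parameter $P(\delta)^{N}$ reconstitutes the proper class $N$. That is exactly where the approximation and cover properties are used, and where the bookkeeping concentrates: coding ${<}\delta$-sized approximating sets of ordinals by subsets of $\delta$, and treating bounded versus cofinal approximating sets separately in the recursion on rank.
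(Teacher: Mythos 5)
The paper itself does not prove this Fact; it cites Woodin, Laver, and Hamkins's strengthening as presented by Fuchs--Hamkins--Reitz, and your proposal reconstructs exactly that standard route: the $\delta$-approximation/cover lemma for small forcing, Hamkins's uniqueness theorem (which you state correctly, including the often-omitted $\delta^{+}$-correctness hypothesis), and a uniform formula with the convention $W_{y}=V$ for degenerate $y$. Three remarks on the parts you intend to prove rather than cite. Your sketch of input (II) is not yet a proof: knowing that every small set of $N$ is, inside $N$, order-isomorphic to an element of $P(\delta)^{N}$ does not ``pin down'' which sets of ordinals of $V$ lie in $N$; ruling out that two models with the same $P(\delta)$ have different small sets is the actual content of the uniqueness theorem, and it is exactly where the $\delta^{+}$-correctness is used in a non-obvious way, so either cite it or budget for the delicate argument. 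Similarly, ``$x$ lies in the unique inner model $N$'' must be made first order; the standard device is to quantify over set-sized candidates (the unique transitive $M$ of height $\theta$, for all large $\theta$, with the approximation and cover properties relative to $H_{\theta}$ and with the decoded $P(\delta)$), via a local uniqueness lemma, rather than a literal class-length recursion, which your sketch does not actually exhibit. Also, your default clause should send $y$ to $V$ not only when no $N$ exists but also when the unique $N$ fails to be a ground (groundhood is expressible via names and genericity); otherwise $W_{r}$ need not be a ground for every $r$.

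The one step that would fail as written is the decoding claim that from $P(\gamma)^{W}$ one recovers $P(\delta)^{W}$ ``absolutely'' via wellfounded extensional relations on $\gamma$: that coding recovers exactly $H^{W}_{(\gamma^{+})^{W}}=H^{W}_{\delta}$, and unbounded subsets of $\delta=(\gamma^{+})^{W}$ have hereditary size $\delta$, so they are not elements of $H^{W}_{\delta}$ and are not reached by it. Since the uniqueness theorem needs agreement on all of $P(\delta)$, this is a gap at precisely the successor edge case $\kappa=\gamma^{+}$ that your choice of parameter was designed to handle. The gap is fillable: $\delta=(\gamma^{+})^{N}$ is already determined by $s=P(\gamma)^{N}$ (as the supremum of order types of wellorders of $\gamma$ coded in $s$), $\delta$ is regular in $N$, so any $a\in N$ with $\vert a\vert^{N}\leq\gamma$ meets $\delta$ in a bounded set; hence, for a candidate $N$ with the $\delta$-approximation property, a set $A\subseteq\delta$ lies in $N$ if and only if every bounded initial segment $A\cap\xi$ lies in the decoded $H^{N}_{\delta}$. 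This characterizes $P(\delta)^{N}$ from $s$ together with the approximation property, after which Hamkins's uniqueness theorem applies, and your parameter $\langle\gamma,P(\gamma)^{W}\rangle$ and the $V_{\kappa}$ bound in the ``moreover'' clause go through.
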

This was proven independently by Woodin \cite{woodinmultiverse} \cite{woodinrecentdevelopments}, Laver \cite{laverextension} and was later strengthened by Hamkins, see \cite{fhrstg}.

This allows us to quantify freely over grounds as we will frequently do.
\smallskip\\
It was quickly realized that every model of $\ZFC$ is the mantle of another model of $\ZFC$, see \cite{fhrstg}, which eradicated any chance of finding nontrivial structure in the mantle. However, the converse question remained open for some while, namely whether the mantle is provably a model of $\ZFC$. This tough nut was cracked by Toshimichi Usuba.
\begin{fact}[Usuba,\cite{usuddg}]
 The mantle is always a model of $\ZFC$.   
\end{fact}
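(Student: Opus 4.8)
The statement splits into a genuinely hard ZFC theorem about the global structure of grounds --- the \emph{strong downward--directed grounds hypothesis} (sDDG) --- and a comparatively soft deduction of $\ZF$ and of $\AC$ for $\mantle$ from it. The plan for the hard part is to prove: \emph{the grounds are downward set--directed}; equivalently, by the parametrization Fact above, for every ordinal $\gamma$ there is a ground $W$ with $W\subseteq W_r$ for all $r\in V_\gamma$. Fix such a $\gamma$ and choose a regular $\kappa>\gamma$ large enough that each $W_r$ with $r\in V_\gamma$ arises via forcing of size ${<}\kappa$, so that each such $W_r$ has the $\kappa$-approximation and $\kappa$-cover properties in $V$ (Hamkins, Laver). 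One then builds, from the $W_r$ ($r\in V_\gamma$) together with enough reflection, an ``intersection--type'' class $W\supseteq\mantle$ and shows that $W$ has the global covering property over $V$ for a somewhat larger regular $\lambda$ (of the order of $(2^{<\kappa})^{+}$); Bukovsk\'{y}'s characterization of grounds then makes $V$ a $\lambda$-c.c.\ generic extension of $W$, so $W$ is a ground, and by construction $W\subseteq W_r$ for all $r\in V_\gamma$. I expect the main obstacle to be precisely this step: one must simultaneously arrange that $W$ satisfies enough set theory to run Bukovsk\'{y}'s theorem \emph{and} that it sits below all the $W_r$, which forces the intricate bookkeeping with the approximation property that is the technical heart of Usuba's argument.

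Granting sDDG, the next step is to extract, for every ordinal $\delta$, a single ground $N_\delta$ with $N_\delta\cap V_\delta=\mantle\cap V_\delta$. Using sDDG, pick for each ordinal $\beta$ a ground $U_\beta$ with $U_\beta\subseteq W_r$ for all $r\in V_\beta$, and thin out recursively to a $\subseteq$-decreasing sequence $\langle U'_\beta\mid\beta\in\Ord\rangle$ of grounds with $\bigcap_\beta U'_\beta=\mantle$: if $U'_\beta=W_t$, let $U'_{\beta+1}=U_{\beta'}$ for some $\beta'>\max(\beta,\op{rank}(t))$, and at limits pass to $U_{\beta'}$ for $\beta'$ above the relevant ranks. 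For fixed $\delta$ the sequence $\langle U'_\beta\cap V_\delta\mid\beta\in\Ord\rangle$ is a $\subseteq$-decreasing sequence of subsets of the set $V_\delta$, hence eventually constant, with common value $U'_{\beta_\delta}\cap V_\delta$ for some $\beta_\delta$; a short argument, combining ``eventually constant'' with ``$U'_\beta\subseteq W_r$ once $\op{rank}(r)<\beta$'', shows this value is exactly $\mantle\cap V_\delta$, so $N_\delta:=U'_{\beta_\delta}$ works.

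Finally, $\ZF$ for $\mantle$ is the Fuchs--Hamkins--Reitz result (see \cite{fhrstg}); it can also be seen directly for e.g.\ Power Set and Separation by reflecting to a suitable $N_\delta$, since for $\delta>\op{rank}(x)+\omega$ one has $\mathcal P(x)^{\mantle}=\mathcal P(x)\cap\mantle=\mathcal P(x)\cap N_\delta=\mathcal P(x)^{N_\delta}$, a set of low rank lying in $N_\delta\cap V_\delta=\mantle\cap V_\delta\subseteq\mantle$. For $\AC$ it suffices, in $\ZF$, to well-order every $V^{\mantle}_\delta$ inside $\mantle$. Fix $\delta$ and pick $\delta^{\ast}>\delta+\omega$ with $N_{\delta^{\ast}}\cap V_{\delta^{\ast}}=\mantle\cap V_{\delta^{\ast}}$. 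As $N_{\delta^{\ast}}\models\ZFC$, there is a well-ordering $\prec\in N_{\delta^{\ast}}$ of $V^{N_{\delta^{\ast}}}_\delta=V_\delta\cap\mantle=V^{\mantle}_\delta$; since $\prec$ has rank below $\delta^{\ast}$ it lies in $N_{\delta^{\ast}}\cap V_{\delta^{\ast}}=\mantle\cap V_{\delta^{\ast}}\subseteq\mantle$, and ``being a well-ordering'' is downward absolute, so $\prec$ witnesses in $\mantle$ that $V^{\mantle}_\delta$ is well-orderable. Hence $\mantle\models\AC$. The genuinely new and difficult ingredient is the sDDG of the first paragraph; everything after it is a soft consequence.
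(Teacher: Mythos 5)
This Fact is not proved in the paper at all: it is imported as a black box from Usuba \cite{usuddg}, and the only proof-like content the paper records nearby is the Fuchs--Hamkins--Reitz observation (in the introduction) that downward \emph{set}-directedness of the grounds yields $\AC$ in $\mantle$. Your second and third paragraphs are a correct version of exactly that soft deduction: granting set-directedness, your construction of grounds $N_\delta$ with $N_\delta\cap V_\delta=\mantle\cap V_\delta$ and the downward absoluteness of ``$\prec$ is a wellorder'' do give $\mantle\models\ZF+\AC$ (with $\ZF$ already available from \cite{fhrstg}); the paper's sketch reaches $\AC$ even faster, by choosing for each wellorder $\prec$ of a putative counterexample $X\in\mantle$ a ground omitting $\prec$ and then a single ground below all of them.

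The genuine gap is in your first paragraph, which is where all of Usuba's theorem actually lives. You correctly identify the target (strong downward directedness: for every $\gamma$ a ground below all $W_r$, $r\in V_\gamma$) and the intended tools ($\kappa$-approximation and cover properties, Bukovsk\'y's characterization of set-generic extensions via the uniform $\lambda$-covering property), but you do not construct the common ground: ``one then builds an intersection-type class $W\supseteq\mantle$ and shows it has the covering property'' is a statement of the goal, not an argument. Note in particular that one cannot simply take $W=\bigcap_{r\in V_\gamma}W_r$ and hope the covering property and $\ZFC$ come along: intersections of grounds need not be grounds, and indeed the present paper is devoted to showing that such intersections (restricted mantles) can fail choice. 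Usuba's proof requires a genuine amalgamation argument, exploiting the $\kappa$-approximation and cover properties of all the $W_r$ simultaneously to manufacture a $\ZFC$-model contained in every $W_r$ and to verify the uniform covering property at roughly $(2^{<\kappa})^+$ before Bukovsk\'y's theorem can be applied; none of that is carried out, and you flag it yourself as the expected obstacle. So the proposal is an accurate outline of Usuba's strategy together with a correct soft deduction from sDDG, but it is not a proof of the Fact.
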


Thereby the mantle was established as a well behaved canonical object in the theory of forcing. Fuchs-Hamkins-Reitz \cite{fhrstg} suggested to study restricted forms of the mantle.

\begin{defn}
Let $\Gamma$ be a class\footnote{In this case, we think of $\Gamma$ as a definition, possibly with ordinal parameters, so that $\Gamma$ can be evaluated grounds of $V$.} $\Gamma$ of forcings.

\begin{enumerate}[label=\rn*]
\item A $\Gamma$\textit{-ground}\index{Ground!Gamma@$\Gamma$-} is a ground $W$ that extends to $V$ via a forcing $\mathbb P\in\Gamma^W$.
\item The $\Gamma$\textit{-mantle}\index{Mantle!Gamma@$\Gamma$-} $\mantle_\Gamma$ is the intersection of all $\Gamma$-grounds.
\item We say that the $\Gamma$-grounds are \textit{downwards directed}\index{Ground!Gamma@$\Gamma$!downwards directed} if for any two $\Gamma$-grounds $W_0, W_1$ there is a $\Gamma$-ground $W_\ast\subseteq W_0, W_1$.
\item We say that the $\Gamma$-grounds are downwards set-directed\index{Ground!Gamma@$\Gamma$!downwards set-directed} if for any set-indexed collection of $\Gamma$-grounds $\langle W_r\mid r\in X\rangle$ there is a $\Gamma$-ground $W_\ast$ contained in all $W_r$ for $r\in X$.
\item We say that $\Gamma$ is \textit{ground absolute} if the $\Gamma$-grounds of a $\Gamma$-ground $W$ are exactly those common grounds of $V$ and $W$ that are $\Gamma$-grounds from the perspective of $V$, i.e. being a $\Gamma$-ground is absolute between $V$ and all $\Gamma$-grounds.
\end{enumerate}
\end{defn}

\begin{rem}
Note that if $\Gamma$ is provably (in $\ZFC$) closed under quotients and two-step iterations then $\Gamma$ is ground absolute.
\end{rem}
Fuchs-Hamkins-Reitz \cite{fhrstg} have shown abstractly that if $\Gamma$ is ground absolute and has directed grounds then $\mantle_\Gamma\models \ZF$. To prove $\mantle\models\AC$ they seemingly need the stronger assumption that the $\Gamma$-grounds are downwards set-directed, the argument is as follows: Suppose $X\in\mantle$ is not wellordered in $\mantle$. Then for every wellorder $\prec$ of $X$, we choose $W_\prec$ a $\Gamma$-ground from which $\prec$ is missing. By downwards set directedness, there is a $\Gamma$-ground $W$ contained in all such grounds $W_\prec$, but then $X\in W$ is not wellordered in $W$ either, contradiction. The main result of this part shows that indeed simple downwards directedness does not suffice to prove choice in $\mantle_\Gamma$ in general.\\

We will be interested in $\mantle_\Gamma$ for $\Gamma$ the class of all forcings of size ${<}\kappa$, where $\kappa$ is some given cardinal. In this case, we denote the $\Gamma$-mantle by $\kapman$ and call it the $\kappa$\textit{-mantle}\index{Mantle!kappa@$\kappa$-}. The associated grounds are the $\kappa$\textit{-grounds}\index{Ground!kappa@$\kappa$-}. The interest of the $\kappa$-mantle arose in different contexts. 

The following is known:

\begin{fact}[Usuba, \cite{usuext}]\label{stronglimitzffact}
If $\kappa$ is a strong limit then $\kapman\models \ZF$.
\end{fact}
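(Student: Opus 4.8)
The plan is to apply the abstract criterion of Fuchs-Hamkins-Reitz recalled above: taking $\Gamma$ to be the class of forcings of size ${<}\kappa$, it suffices to check that $\Gamma$ is ground absolute and that the $\kappa$-grounds are downwards directed, for then $\kapman=\mantle_\Gamma\models\ZF$.

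\emph{Ground absoluteness.} If $\kappa$ is inaccessible this is immediate from the earlier remark: $\Gamma$ is then closed under quotients and two-step iterations, since a maximal antichain deciding the size of the second iterand has size ${<}\kappa$ and, $\kappa$ being regular, a single $\mu<\kappa$ then bounds that size, so after normalizing $\mathbb P\ast\dot{\mathbb Q}$ has a dense subset of size $\le|\mathbb P|\cdot\mu<\kappa$. For singular strong limit $\kappa$ this fails, so I would verify ground absoluteness directly. Fix a $\kappa$-ground $W$ of $V$, say $V=W[G]$ with $\mathbb P\in W$, $|\mathbb P|^W<\kappa$, and a common ground $W'\subseteq W$. \emph{Upward:} if $W=W'[H]$ with $H$ generic over $\mathbb Q\in W'$, $|\mathbb Q|^{W'}<\kappa$, then, normalizing $\mathbb P$ inside $W$ to live on an ordinal $\rho<\kappa$ and choosing a $\mathbb Q$-name $\dot{\mathbb P}\in W'$ for it forced to have underlying set $\check\rho$, the model $V$ is a generic extension of $W'$ by $\mathbb Q\ast\dot{\mathbb P}$, which has the dense subset $\{(q,\check\alpha)\mid\alpha<\rho,\ q\forces\check\alpha\in\dot{\mathbb P}\}$ of size ${<}\kappa$. \emph{Downward:} if $V=W'[K]$ with $K$ generic over $\mathbb R\in W'$, $|\mathbb R|^{W'}=\nu<\kappa$, then from $W'\subseteq W\subseteq W'[K]$ and $W\models\ZFC$ the intermediate model theorem yields $W=W'[K\cap\mathbb B]$ for a complete subalgebra $\mathbb B\le\RO^{W'}(\mathbb R)$, and $|\mathbb B|^{W'}\le|\RO^{W'}(\mathbb R)|\le 2^\nu<\kappa$ since $\kappa$ is a strong limit; so $W'$ is a $\kappa$-ground of $W$.

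\emph{Downwards directedness.} This is the substantive step, handled by Usuba's covering argument. Given $\kappa$-grounds $W_0,W_1$, fix a regular $\delta$ with $\max(|\mathbb P_0|^{W_0},|\mathbb P_1|^{W_1})<\delta<\kappa$, which exists because $\kappa$ is a limit cardinal; then $W_0,W_1$ both have the $\delta$-cover and $\delta$-approximation properties. By Hamkins' uniqueness theorem each ground with these properties is determined by its $P(\delta)$, so there are only $\le 2^{2^{\delta}}<\kappa$ many of them; applying (the proof of) Usuba's downward directed grounds hypothesis, carried out with bookkeeping of cardinalities, one obtains a common ground $W_\ast\subseteq W_0\cap W_1$ which is itself a ground of $V$ via a forcing of size bounded by a fixed function of $\delta$, hence ${<}\kappa$ by the strong limit hypothesis. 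Thus $W_\ast$ is a $\kappa$-ground and the $\kappa$-grounds are downwards directed, completing the proof.

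I expect the last step to be the main obstacle: extracting from Usuba's argument that the common ground comes equipped with a small witnessing forcing, rather than merely being \emph{some} inner ground, and the strong limit assumption is precisely what absorbs the exponential blow-ups in the cardinal computation. Finally I would note why this yields only $\ZF$ and not $\ZFC$: for a \emph{set}-indexed family of $\kappa$-grounds the sizes of the witnessing forcings can be cofinal in $\kappa$ (for instance when $\kappa$ is singular), so there is no single $\delta<\kappa$ to feed into the covering argument and downwards set-directedness cannot be inferred --- in accordance with the counterexamples established in this paper.
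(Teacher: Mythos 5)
Your proposal is correct and follows essentially the route the paper indicates for this cited result: the Fuchs--Hamkins--Reitz criterion (ground absoluteness plus downward directedness of the $\Gamma$-grounds gives $\mantle_\Gamma\models\ZF$), with directedness of the $\kappa$-grounds supplied by Usuba's covering argument with cardinality bookkeeping, the strong limit hypothesis absorbing the $2^{2^\delta}$-type bounds. Your extra verification of ground absoluteness (two-step iterations via a condition bounding the second iterand, and the intermediate model theorem with $|\mathbb B|\le 2^\nu<\kappa$ downward) is sound and fills in exactly the details the paper leaves to the reader and to \cite{usuext}.
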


Usuba proved this by showing that the $\kappa$-grounds are directed in this case. Usuba subsequently asked:

\begin{que}[Usuba, \cite{usuext}]
Is $\kapman$ always a model of $\ZFC$?
\end{que}

We will answer this question in the negative by providing counterexamples for three different types of cardinals $\kappa$.\\

We also mention that Fuchs-Hamkins-Reitz demonstrated that $\mantle_\Gamma$ can fail to be a model of choice for a different class of forcings, $\Gamma=\{\sigma\text{-closed forcings}\}$.

\begin{fact}[Fuchs-Hamkins-Reitz, \cite{fhrstg}]
If $\Gamma$ is the class of all $\sigma$-closed forcings it is consistent that $\mantle_\Gamma\models \ZF\wedge\neg \mathrm{AC}$.
\end{fact}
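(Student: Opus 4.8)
The plan is to realize a symmetric submodel of a $\sigma$-closed forcing extension as the $\sigma$-closed mantle of that extension, adapting the Fuchs-Hamkins-Reitz template that realizes an arbitrary model of $\ZFC$ as a mantle, but inserting symmetry so that choice is destroyed in the limit. Concretely, I would work over $L$ and fix a $\sigma$-closed forcing $\mathbb{P}$ carrying a large automorphism group: the natural candidate is an Easton-support, proper-class-length product of copies of $\mathrm{Add}(\omega_1,1)$ indexed by a definable class $I$, adding a sequence $\langle a_\iota : \iota \in I\rangle$ of Cohen subsets of $\omega_1$; let $\mathcal{G}$ be the group induced by the permutations of $I$, let $\mathcal{F}$ be the normal filter of subgroups generated by the pointwise stabilizers of the \emph{countable} subsets of $I$, and let $N = \mathrm{HS}^{V[G]}$ be the associated symmetric inner model, so that $N$ is generated over $\Ord$ by the single set $A := \{a_\iota : \iota \in I\}$. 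A standard support-and-swapping argument shows $A \in N$ while $A$ is not well-orderable in $N$ — a well-order of $A$ would have a countable support $E$, and a transposition of two indices outside $E$ fixes that well-order as a set of pairs but exchanges two of its elements — so $N \models \ZF \wedge \neg\AC$, since symmetric submodels always satisfy $\ZF$.

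The crux is the identification $N = \mantle_\Gamma^{V[G]}$, where $\Gamma$ is the class of $\sigma$-closed forcings, and this splits into two inclusions. For $\mantle_\Gamma \supseteq N$ it suffices, since $N$ is generated over the ordinals by $A$, to show $A$ lies in \emph{every} $\sigma$-closed ground $W$ of $V[G]$; this is a robustness statement to be extracted from the homogeneity of $\mathbb{P}$: if $\mathbb{Q} \in W$ is $\sigma$-closed with $W[H] = V[G]$, a mutual-genericity argument lets one permute the $a_\iota$ over $W$, so $\mathbb{Q}$ cannot force a value for the unordered set $A$ unless $A \in W$ already. For $\mantle_\Gamma \subseteq N$ one shows that every $x \in V[G]\setminus N$ — in particular each individual generic $a_\iota$ and each well-order of $A$ — is omitted by some $\sigma$-closed ground; the witnesses are the grounds $V[\langle a_\iota : \iota \in J\rangle]$ for co-countable subclasses $J \subseteq I$, each a $\sigma$-closed ground via the $\sigma$-closed product over $I\setminus J$, and a $\Delta$-system / mutual-genericity computation shows that their common part is exactly $N$.

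The main obstacle — and the reason a fixed set-length product does not suffice — is that one must control \emph{all} $\sigma$-closed grounds of $V[G]$, not merely the sub-product grounds just described: a priori an exotic $\sigma$-closed ground could meet the others below $N$ and collapse the mantle to a model of $\AC$ (indeed, for a set-length product the $\sigma$-closed mantle is readily seen to be just $L$). This is where the proper-class length is used: a reflection/bedrock-type argument should show that every $\sigma$-closed ground of $V[G]$ is, modulo $\Ord$, generated by a sub-product $\langle a_\iota : \iota \in J\rangle$ and hence lies above $N$, while the co-countable sub-products witness that nothing outside $N$ survives the intersection. The same analysis gives that the $\sigma$-closed grounds are directed — for two sub-product grounds the union of the index complements yields a common $\sigma$-closed ground below both — and since $\sigma$-closed forcings are closed under two-step iterations, hence ground absolute, the abstract Fuchs-Hamkins-Reitz criterion recalled above yields $\mantle_\Gamma \models \ZF$; combined with $N = \mantle_\Gamma$ this gives $\mantle_\Gamma \models \ZF \wedge \neg\AC$, as desired.
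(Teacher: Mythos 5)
The statement you are proving is only quoted in the paper as a Fact from Fuchs--Hamkins--Reitz, so there is no in-paper proof to match; judged on its own terms, your construction does not work. The fatal point is the shape of the forcing: since every coordinate of your class product is the \emph{same} forcing $\Add(\omega_1,1)$ with countable support, removing \emph{any} set-sized subfamily of coordinates is a $\sigma$-closed set forcing, so $L[G\res(\Ord\setminus x)]$ is a $\sigma$-closed ground for every set $x\subseteq I$, not just for countable $x$. Combined with the fact that the class product is $\Ord$-cc (a $\Delta$-system argument on countable supports under $\GCH$), every set of ordinals in $V[G]$ lies in some $L[G\res y]$ with $y$ a set, and if it also lies in the complementary ground $L[G\res(\Ord\setminus y)]$ then by Solovay's mutual-genericity lemma it lies in $L$. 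Hence the $\sigma$-closed mantle of your $V[G]$ has exactly the $L$-sets of ordinals; in particular its $\mathcal{P}(\omega_1)$ is well-ordered there, and none of your intended witnesses to $\neg\AC$ survive. The deferred ``crux'' (that all $\sigma$-closed grounds are sub-product grounds) would not rescue this: the sub-product grounds you do have already shrink the mantle below your $N$, e.g.\ $a_\iota\notin L[G\res(\Ord\setminus\{\iota\})]$, so $N\not\subseteq\mantle_\Gamma$.

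The symmetric-model side is also broken. With a proper-class index set, $A=\{a_\iota\mid\iota\in I\}$ is a proper class, so ``$A$ has no well-order in $N$'' is not a failure of $\AC$ (which is about sets); and since each canonical name $\dot a_\iota$ is fixed by the pointwise stabilizer of $\{\iota\}$, every single $a_\iota$ is hereditarily symmetric, so your $N$ contains class-many distinct subsets of $\omega_1$ and fails Power Set --- it is not a $\ZF$ model at all. (Your own observation that a set-length product gives mantle $=L$ is correct, but passing to class length in this homogeneous way trades that problem for the two above.) What a correct argument needs --- and what the paper's own $\kappa$-mantle constructions exploit --- is a product whose coordinates have strictly increasing closure with tail-like support, so that only tails can be stripped off by forcings in $\Gamma$: then an ``unordered''/fuzzy family (a sequence of eventual-coincidence classes) lies in every $\Gamma$-ground while a choice sequence for it would reconstruct a generic that some $\Gamma$-ground omits. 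For the $\sigma$-closed Fact one should mimic this with, say, a class product of $\Add(\kappa_\alpha^+,1)$ over increasing cardinals (so the quotients that stay $\sigma$-closed are tail quotients), rather than identically-sized factors at $\omega_1$; as written, your approach proves the opposite of what you want about its own model.
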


%\subsection{Large Cardinals and $\AC$ in their Mantle}

It turns out that there is an interesting tension between large cardinal properties of $\kappa$ and the failure of choice in $\kapman$. On the one side, Usuba has shown:

\begin{fact}[Usuba, \cite{usuext}]\label{usubaext}
If $\kappa$ is extendible then $\kapman=\mantle$. In particular $\kapman$ is a model of $\ZFC$.
\end{fact}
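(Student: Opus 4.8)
The inclusion $\mantle\subseteq\kapman$ is trivial, since the $\kappa$-grounds form a subclass of all grounds and $\mantle$ is the intersection of the latter. For the reverse inclusion it suffices to establish two claims: \textbf{(a)} $\kapman$ is itself a $\kappa$-ground of $V$ (so in particular $\kapman\models\ZFC$), and \textbf{(b)} $\kapman$ is a bedrock, i.e.\ $\kapman\subseteq W$ for every ground $W$ of $V$. Indeed (b) gives $\kapman\subseteq\bigcap\{W\mid W\text{ a ground}\}=\mantle$.

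Claim (a) is the heart of the matter. Recall from the displayed Fact that the $\kappa$-grounds are exactly the models $W_r$ with $r\in V_\kappa$, so they form a \emph{set}-indexed family; by Usuba's downward-set-directedness of grounds (the engine behind the Fact that $\mantle\models\ZFC$) there is a ground $W_0\subseteq W_r$ for all $r\in V_\kappa$, so $W_0\subseteq\kapman$, and in fact $\kapman=W_0$ provided $W_0$ is a $\kappa$-ground. To see this, fix a cardinal $\theta$ bounding the rank of a forcing witnessing that $W_0$ is a ground, and choose $\lambda>\theta$ large enough (a $\beth$-fixed point with $V_\lambda\prec_{\Sigma_2}V$ will do) that grounds of $V$ below $\lambda$, and their truncations, are computed correctly in $V_\lambda$. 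Using extendibility, fix an elementary $j\colon V_\lambda\to V_{j(\lambda)}$ with $\crit(j)=\kappa$ and $j(\kappa)>\lambda$. Since $j\res V_\kappa=\mathrm{id}$, every $\kappa$-ground of $V$ appears, truncated to $V_{j(\lambda)}$, among the $j(\kappa)$-grounds of $V_{j(\lambda)}$; applying $j$ to a statement asserting that a forcing of size $\theta$ witnesses that $W_0$ lies below all $W_r$ with $r\in V_\kappa$, elementarity yields in $V_{j(\lambda)}$ a model sitting below all $j(\kappa)$-grounds and realized as a ground via a forcing of size $<j(\kappa)$. Pulling this back below $\kappa$ shows that $\kapman$ is a $\kappa$-ground of $V$, proving (a).

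For (b), note first that $\kappa$ remains extendible in $\kapman$ by the L\'evy--Solovay argument applied to the $<\kappa$-sized forcing from $\kapman$ to $V$; also, since $\kappa$ is inaccessible, any $\kappa$-ground of $\kapman$ composes with that forcing to a $\kappa$-ground of $V$, hence by (a) equals $\kapman$---so $\kapman$ has no proper $\kappa$-grounds. Iterating (a) inside $\kapman$, $\kapman^{\kapman}$, \dots\ and invoking downward-directedness at each stage, the resulting descending chain of $\kappa$-grounds stabilises, so $\kapman$ has no proper grounds at all. Finally, given an arbitrary ground $W$ of $V$, downward directedness yields a common ground $W_\ast\subseteq\kapman\cap W$; as $\kapman\models\ZFC$, $W_\ast$ is a ground of $\kapman$ by the intermediate-model theorem, hence $W_\ast=\kapman$, and therefore $\kapman\subseteq W$. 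This gives (b), so $\kapman=\mantle$, and it is a model of $\ZFC$ since it is a $\kappa$-ground of $V$.

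The main obstacle is Claim (a): extendibility---rather than mere strong-limit-ness, which by Fact~\ref{stronglimitzffact} only yields $\ZF$---is exactly what is needed to turn the decreasing intersection $\kapman=\bigcap_{r\in V_\kappa}W_r$ into an actual $\kappa$-ground, and making the reflection rigorous requires care in negotiating the proper-class character of grounds against the set-sized embedding, in particular in choosing $\lambda$ so that truncated grounds, and the forcing relation witnessing ground-hood, are absolute between $V_\lambda$, $V$, and $V_{j(\lambda)}$.
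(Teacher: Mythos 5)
The paper does not actually prove this statement---it is imported from Usuba's work as a black box---so I can only judge your argument on its own terms. Your outer scaffolding is fine: $\mantle\subseteq\kapman$ is trivial, the $\kappa$-grounds are set-indexed by $V_\kappa$, and Usuba's downward set-directedness of grounds does produce a ground $W_0\subseteq\kapman$, so everything reduces to showing that $W_0$ is a $\kappa$-ground and that it lies below every ground. But precisely there the proposal has a genuine gap, in both (a) and (b).

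In (a), the elementarity step does not deliver what you claim. If $W_0$ is a ground of $V_\lambda$ via a forcing of size $\theta$, then applying $j$ yields a ground of $V_{j(\lambda)}$ via a forcing of size $j(\theta)$; since there is no reason to have $\theta<\kappa$ (that is essentially what is being proved), we may have $j(\theta)\geq j(\kappa)$, so you do not obtain a ground ``realized via a forcing of size $<j(\kappa)$.'' The inequality you are implicitly using is $\theta<\lambda<j(\kappa)$, but this concerns the un-shifted objects, and neither $\lambda$ nor the witnessing forcing lies usefully in the range of $j$, so the concluding ``pulling this back below $\kappa$'' has no mechanism behind it. The real content of Usuba's theorem is the lemma that extendibility of $\kappa$ makes \emph{every} ground a $\kappa$-ground; its proof runs through the $\kappa$-approximation and cover properties and the Laver--Woodin--Hamkins characterization of grounds, not through a direct $j$-image of the sentence ``$W_0$ is a ground via a forcing of size $\theta$.'' The same missing lemma undermines (b): from ``$\kapman$ has no proper $\kappa$-grounds'' and ``$\kapman^{\kapman}=\kapman$'' you cannot conclude that $\kapman$ has no proper grounds at all, since a proper ground of $\kapman$ realized only by a forcing of size $\geq\kappa$ is not excluded by anything you have shown, and ``the descending chain stabilises'' is not an argument. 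Once one has the lemma that every ground is a $\kappa$-ground, the theorem follows in two lines ($\mantle=\kapman$ is immediate, and $W_0\subseteq\mantle\subseteq W_0$ gives the bedrock), so the right move is to isolate and prove that lemma rather than distribute its content, unproven, across (a) and (b).
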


Indeed, this result was the initial motivation of investigating the $\kappa$-mantle. Sargsyan-Schindler \cite{ssvar} showed that a similar situation arises in the least iterable inner model with a strong cardinal above a Woodin cardinal for $\kappa$ the unique strong cardinal in this universe. See also \cite{varsovianii} and \cite{varsovianomega} for further results in this direction.\\
On another note, Schindler has proved the following.
\begin{fact}[Schindler, \cite{schnotes}]\label{measurablethm}
If $\kappa$ is measurable then $\kapman\models \ZFC$.
\end{fact}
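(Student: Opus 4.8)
A measurable cardinal is inaccessible, hence a strong limit, so Fact~\ref{stronglimitzffact} already yields $\kapman\models\ZF$, and the entire task is to verify the axiom of choice inside $\kapman$. I would follow the Fuchs-Hamkins-Reitz strategy recalled in the introduction: if some $X\in\kapman$ were not wellorderable in $\kapman$, then for each wellordering $\prec$ of $X$ (one exists in $V$) pick a $\kappa$-ground $W_\prec$ with $\prec\notin W_\prec$; a common $\kappa$-ground $W_\ast$ of the set-indexed family $\{W_\prec\}_\prec$ would then contain $X$ but omit every wellordering of $X$, contradicting $W_\ast\models\ZFC$. So it suffices to show the $\kappa$-grounds are downward set-directed, and since there are at most $|V_\kappa|=\kappa$ many of them this reduces to amalgamating an arbitrary family of at most $\kappa$ many $\kappa$-grounds into one. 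Mere pairwise directedness, which is what Usuba's proof of Fact~\ref{stronglimitzffact} gives for a general strong limit, is not enough: the counterexamples constructed later in this paper have directed $\kappa$-grounds, so measurability must be used essentially.

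The measurability enters through a normal measure together with the L\'evy--Solovay phenomenon. Fix a normal measure $U$ on $\kappa$ with ultrapower $j\colon V\to M=\Ult(V,U)$, so $\crit(j)=\kappa<j(\kappa)$, ${}^\kappa M\subseteq M$, $V_{\kappa+1}^M=V_{\kappa+1}^V$ and $j\res V_\kappa=\mathrm{id}$. For any $\kappa$-ground $W$, say $V=W[G]$ with $|\PP|<\kappa$, L\'evy--Solovay gives that $\bar U:=U\cap W$ is a normal measure on $\kappa$ in $W$, that $j\res W$ is the corresponding ultrapower map of $W$, and that small forcing commutes with the ultrapower, so $j(W)=\Ult(W,\bar U)$ and $M=j(W)[G]$ with $|\PP|<j(\kappa)$; hence $j(W)$ is a $j(\kappa)$-ground of $M$. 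Combined with $j(\kapman)=\mantle_{j(\kappa)}^{M}$, which is immediate by elementarity applied to the uniform parametrisation of grounds, this shows that $j$ restricts to a fully elementary map $j\res\kapman\colon\kapman\to\mantle_{j(\kappa)}^{M}$.

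It remains to prove set-directedness. Families of strictly fewer than $\kappa$ many $\kappa$-grounds can be amalgamated by the method of Usuba's proof of Fact~\ref{stronglimitzffact}: pairwise amalgamations are realised by forcings in $V_\kappa$, one iterates along the family, and limit stages of length $<\kappa$ close off because $\kappa$, being inaccessible, is closed under the ordinal bounds produced by Usuba's uniform downward directedness. The essentially new case is a family $\langle W_{r_\xi}\mid\xi<\kappa\rangle$ of full size $\kappa$, and here the plan is to transport the amalgamation into $M$: the sequence $\langle r_\xi\mid\xi<\kappa\rangle$ lies in $M$ since ${}^\kappa M\subseteq M$, and in $M$ it indexes $\kappa<j(\kappa)$ many $j(\kappa)$-grounds of $M$ --- a family small relative to the inaccessible $j(\kappa)$ --- which therefore admits a common $j(\kappa)$-ground of $M$ by the ${<}j(\kappa)$-directedness established inside $M$. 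The main obstacle I expect is the descent: from a common $j(\kappa)$-ground of $M$ below all $W_{r_\xi}^{M}$ one must extract a common $\kappa$-ground of $V$ below all $W_{r_\xi}$, using the elementarity of $j$, the L\'evy--Solovay factorisation $M=j(W_{r_0})[G]$, and a matching of the witnessing index from $V^{M}_{j(\kappa)}$ with one in $V_\kappa$. An alternative attack on the same point is to verify directly, via Bukovsk\'y's Theorem, that $\bigcap_{\xi<\kappa}W_{r_\xi}$ has the $\kappa$-covering property in $V$, constructing the needed covers by amalgamating --- under the guidance of $U$ --- the covers each individual $W_{r_\xi}$ provides, where $\kappa$ is still measurable. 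Either way, once the $\kappa$-grounds are set-directed the Fuchs-Hamkins-Reitz argument delivers $\kapman\models\ZFC$.
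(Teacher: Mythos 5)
This Fact is quoted in the paper from Schindler's notes without a proof, so there is no in-paper argument to compare against; judged on its own terms, your proposal is a strategy sketch whose decisive steps are missing. You reduce the Fact to the much stronger assertion that the $\kappa$-grounds are downward set-directed, and neither half of your proposed verification of that assertion is actually carried out. For families of size ${<}\kappa$ you appeal to ``iterating'' Usuba's argument, but Usuba's strong-limit result gives directedness for finitely many $\kappa$-grounds; transfinite amalgamation is not a routine iteration (the common $\kappa$-ground produced at one stage does not obviously cohere with the next, and nothing in your sketch handles limit stages beyond the phrase ``close off''), so this case is asserted rather than proved. For the crucial size-$\kappa$ case you transfer the family to $M=\Ult(V,U)$ and invoke the (again unproved, merely reflected) directedness there to get a common $j(\kappa)$-ground $N$ of $M$; but $M$ is not a ground of $V$, and grounds of $M$ bear no general relation to grounds of $V$, so the ``descent'' you flag as the main obstacle is not a technical afterthought but the entire content of the theorem. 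You leave it open, and the Bukovsk\'y alternative is only named, not executed.

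Two further points sharpen the gap. First, even the weaker conclusion that would suffice for the Fuchs--Hamkins--Reitz argument --- some inner model $W_\ast\models\AC$ with $X\in W_\ast\subseteq\bigcap_\xi W_{r_\xi}$, not necessarily a $\kappa$-ground of $V$ --- is not delivered by your $N$: you would need $X\in N$, i.e.\ something like $\kapman\subseteq\mathbb{M}_{j(\kappa)}^{M}$ (or even just $\kapman\subseteq M$), and none of this is established; the Lévy--Solovay analysis only yields the easy inclusion $j(W)\subseteq W$, hence $\mathbb{M}_{j(\kappa)}^{M}\subseteq\kapman$, which points the wrong way. Second, downward set-directedness of the $\kappa$-grounds at a measurable is, as far as I can tell, not a known theorem and is strictly stronger than what the Fact needs, so routing the proof through it without an argument leaves the proposal resting on an unproved intermediate statement. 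The correct framing of the problem (Lévy--Solovay, the ultrapower restriction $j\res\kapman$, and the observation that mere pairwise directedness cannot suffice in view of the Mahlo counterexample) is genuine progress, but as it stands this is not a proof.
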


The big difference to Fact \ref{usubaext} is that the existence of a measurable is consistent with the failure of the Bedrock Axiom\footnote{The Bedrock Axiom states that the universe has a minimal ground, which turns out to be equivalent to ``$\mantle$ is a ground".}. Particularly, we might have $\kapman\neq\mantle$ for $\kappa$ measurable.\\
If we go even lower in the large cardinal hierarchy then even less choice principles seem to be provable in the corresponding mantle. The relevant results here are due to Farmer Schlutzenberg.

\begin{fact}[Schlutzenberg, \cite{farmermantles}]
Suppose that $\kappa$ is weakly compact. Then 
\begin{enumerate}[label=$(\roman*)$]
\item $\kapman\models\kappa\text{-}\mathrm{DC}$ and
\item for every $A\in H_{\kappa^+}\cap \kapman$,
$$\kapman\models``A\in H_{\kappa^+}\text{ is wellorderable}".$$
\end{enumerate}
\end{fact}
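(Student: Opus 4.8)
The plan is to reduce both parts to a single code-amalgamation construction and to isolate the two places where weak compactness is genuinely needed. First I would record three preliminaries. Since $\kappa$ is weakly compact it is inaccessible, hence a strong limit, so by Fact~\ref{stronglimitzffact} the $\kappa$-grounds are downward directed and $\kapman\models\ZF$; moreover, by the parametrization of grounds quoted above, every $\kappa$-ground is $W_r$ for some $r\in V_\kappa$, so there are at most $|V_\kappa|=\kappa$ many of them. Second, a forcing of size ${<}\kappa$ is $\kappa$-c.c.\ and hence neither destroys nor creates weak compactness of $\kappa$ (L\'evy--Solovay together with the fact that $\kappa$-c.c.\ forcing adds no cofinal branch to a $\kappa$-tree), so $\kappa$ is weakly compact --- in particular has the tree property --- in \emph{every} $\kappa$-ground. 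Third, if $A\in H_{\kappa^+}^{V}$ and $W$ is a $\kappa$-ground via $\mathbb P\in W$ with $|\mathbb P|^W<\kappa$, then $V$ and $W$ have the same cardinals above $|\mathbb P|$, and a short computation with $|\tc(\{A\})|^W$ gives $A\in H_{\kappa^+}^{W}$; thus $A\in H_{\kappa^+}^{W}$ for all $\kappa$-grounds $W$.

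\textbf{Part (ii).} Fix $A\in H_{\kappa^+}\cap\kapman$; it suffices to find a single $c\subseteq\kappa$ with $c\in\kapman$ coding the structure $(\tc(\{A\}),\in,A)$, for then $\kapman$ Mostowski-collapses $c$, recovers $A$, and reads off the wellorder of $\tc(\{A\})$ induced by $\dom(c)\subseteq\kappa$, witnessing both $A\in H_{\kappa^+}^{\kapman}$ and the wellorderability of $A$ in $\kapman$. With a suitably chosen notion of code (so that coherent $\kappa$-sequences of initial segments assemble to codes), let $T$ be the tree, ordered by extension, of partial codes $p$ of length $\alpha<\kappa$ such that $p\in\kapman$ and $p$ extends to a code for $A$ inside every $\kappa$-ground. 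Its levels have size ${<}\kappa$ (by inaccessibility $2^{|\alpha|}<\kappa$), and $T\in\kapman$ --- each $\kappa$-ground $W$ defines the same tree $T$ from its own $\kappa$-grounds, which by ground absoluteness of $\Gamma=\{\text{forcings of size }{<}\kappa\}$ are the $\kappa$-grounds of $V$ below $W$, and by directedness these determine the defining condition. The crucial point is that $T$ is pruned: given $p\in T$ and a $\kappa$-ground $W$, let $E_W\neq\emptyset$ be the set of one-step extensions of $p$ extendible to a code for $A$ in $W$; if $W''\subseteq W\cap W'$ is a common $\kappa$-ground then $E_{W''}\subseteq E_W\cap E_{W'}$, so $\{E_W\}$ is a ${<}\kappa$-directed family (once ${<}\kappa$-directedness of the $\kappa$-grounds is available) of nonempty subsets of a set of size ${<}\kappa$, hence has nonempty intersection. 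Finally, apply the tree property of $\kappa$ to the pruned $\kappa$-tree $T$ to get a cofinal branch, chosen in $\kapman$ (e.g.\ as the leftmost such branch relative to the canonical wellorderings of the levels), and assemble it into $c$.

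\textbf{Part (i).} Over $\ZF$, $\kappa\text{-}\mathrm{DC}$ is equivalent to a standard tree statement: every pruned tree of length-${<}\kappa$ sequences (every node extends, chains of limit length have upper bounds) has a cofinal branch. So let $T\in\kapman$ be such a tree and seek a cofinal branch in $\kapman$. When the levels of $T$ are wellorderable of size ${<}\kappa$ in $\kapman$ and in every $\kappa$-ground, the argument of Part (ii) applies nearly verbatim: pass to the subtree of nodes amalgamable through all $\kappa$-grounds, use ${<}\kappa$-directedness for prunedness and the tree property for a cofinal branch, and take the leftmost one to land in $\kapman$. The general case is reduced to this by Part (ii): the relevant levels of $T$ lie in $H_{\kappa^+}^{\kapman}$, hence are wellorderable in $\kapman$, and replacing $T$ by an isomorphic copy on ordinals built from the codes furnished by Part (ii) returns us to the previous situation.

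\textbf{Expected main obstacle.} Two steps carry the weight, and both must use weak compactness essentially --- by the body of this paper, mere downward directedness of the $\kappa$-grounds is consistent with $\kapman\not\models\AC$. The first is upgrading directedness to ${<}\kappa$-directedness of the $\kappa$-grounds (any ${<}\kappa$ of them have a common $\kappa$-ground below); I would attempt this either by iterating Usuba's pairwise directedness while controlling the sizes of the intermediate forcings via inaccessibility, or --- more robustly at limit stages --- by a $\Pi^1_1$-reflection argument embedding a size-$\kappa$ structure that contains the relevant ground parameters. The second, which I expect to be harder, is ensuring that the branch extracted via the tree property actually lies in $\kapman$: without choice the leftmost-branch construction can stall at limit levels, so one must either verify that the amalgamation trees are everywhere cofinally branching in a way that survives limits, or carry the definability of the whole construction through $\kapman$ --- which is where the mutual dependence of (i) and (ii) (the tree argument wants wellorderable levels, which is exactly what (ii) supplies) has to be packaged into one induction.
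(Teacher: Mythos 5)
This Fact is quoted from Schlutzenberg \cite{farmermantles}; the paper itself contains no proof of it, so your proposal can only be judged against the statement, and judged that way it has genuine gaps. The load-bearing lemma of your Part (ii) --- that the $\kappa$-grounds are downward ${<}\kappa$-set-directed --- is exactly what you never prove. Usuba's theorem quoted in the paper gives only pairwise directedness for strong limit $\kappa$, and your two suggested upgrades are not arguments: iterating pairwise directedness breaks down at limit stages (a descending sequence of $\kappa$-grounds need not have a $\kappa$-ground below it --- the paper's Mahlo model lives off precisely such configurations), and ``a $\Pi^1_1$-reflection argument'' is left unspecified. Without this lemma the nonempty-intersection argument for the family $\{E_W\}$ collapses. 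Moreover, even granting it, your amalgamation tree $T$ is only shown to admit one-step (successor) extensions; nothing in the proposal produces nodes at limit levels $\lambda<\kappa$: the union of a branch through $T\res\lambda$ chosen in $V$ is a ${<}\kappa$-sequence of elements of $\kapman$ and need not itself lie in $\kapman$, nor remain extendible to a code for $A$ in every $\kappa$-ground. So you have not shown that $T$ has height $\kappa$, and the tree property has nothing to act on; this limit-level problem is the same difficulty you flag at the end, and it is not resolved anywhere in the proposal.

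Two further points. The device you propose for landing the cofinal branch in $\kapman$ (the ``leftmost branch'' relative to wellorderings of the levels) does not work as stated: the leftmost recursion can die at limit levels, and definability of a branch in $V$ does not by itself give membership in every ground. Ironically, the fix is already in your own preliminaries: each $\kappa$-ground $W$ extends to $V$ by a forcing of size ${<}\kappa$, and since $\kappa$ is inaccessible in $W$ such a forcing adds no cofinal branch to a $\kappa$-tree lying in $W$; hence \emph{any} cofinal branch of $T\in\kapman$ found in $V$ already lies in every $\kappa$-ground and therefore in $\kapman$, with no definable selection needed. Finally, Part (i) is essentially not addressed: $\kappa\text{-}\mathrm{DC}$ in $\kapman$ concerns arbitrary relations, equivalently trees whose levels can have arbitrary size and certainly need not lie in $H_{\kappa^+}^{\kapman}$, so the proposed reduction ``the relevant levels lie in $H_{\kappa^+}^{\kapman}$, hence are wellorderable by (ii)'' is unjustified, and the tree property (which requires levels of size ${<}\kappa$) cannot be applied after any reduction you describe. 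As it stands the proposal establishes neither (i) nor (ii).
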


\begin{defn}
Suppose $\alpha$ is an ordinal and $X$ is a set. $({<}\alpha, X)$-choice\index{(<alpha X) choice@$({<}\alpha, X)$-choice} holds if for any $\beta<\alpha$ and any sequence $\vec x\coloneqq\langle x_\gamma\mid\gamma<\beta\rangle$ of nonempty elements of $X$ there is a choice sequence for $\vec x$, that is a sequenece $\langle y_\gamma\mid\gamma<\beta\rangle$ with $y_\gamma\in x_\gamma$ for all $\gamma<\beta$.
\end{defn}

\begin{fact}[Schlutzenberg, \cite{farmermantles}]\label{farmerinaccessiblefact}
Suppose $\kappa$ is inaccessible. Then we have 
\begin{enumerate}[label=$(\roman*)$]
    \item $V_\kappa\cap\kapman\models\ZFC$ and
    \item $\kapman\models ``({<}\kappa, H_{\kappa^+})\text{-choice}"$.
\end{enumerate}
\end{fact}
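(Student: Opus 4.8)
Write $\kapman$ for the $\kappa$-mantle, and note first that if $W$ is a $\kappa$-ground then $\kappa$ is still inaccessible in $W$ (regularity passes to inner models, and $(2^\lambda)^W\leq(2^\lambda)^V<\kappa$), so $V_\kappa^W\models\ZFC$, and since rank is absolute $V_\kappa^W=V_\kappa\cap W$. For part (i) the plan is to prove the identity $V_\kappa\cap\kapman=\mantle^{V_\kappa}$, the \emph{ordinary} mantle computed inside the model $V_\kappa$, and then to quote Usuba's theorem that the mantle of any model of $\ZFC$ models $\ZFC$ — applied inside $V_\kappa$, which is legitimate since $\kappa$ is inaccessible. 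For part (ii) the plan is to reduce $({<}\kappa,H_{\kappa^+})$-choice to a closure property of $H_{\kappa^+}^{\kapman}$ under short sequences.

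For part (i), one inclusion is easy: if $W$ is a $\kappa$-ground, say $V=W[G]$ with $G$ generic for $\PP\in V_\kappa^W$ of size $<\kappa$, then (using that $\kappa$ is inaccessible in $W$, one may take nice names of hereditary size $<\kappa$) $V_\kappa=V_\kappa^W[G]$, so $V_\kappa^W$ is a ground of $V_\kappa$; hence $\mantle^{V_\kappa}$, the intersection of \emph{all} grounds of $V_\kappa$, is contained in $\bigcap\{V_\kappa^W\mid W\text{ a }\kappa\text{-ground of }V\}=V_\kappa\cap\kapman$. The converse — that every ground $U$ of $V_\kappa$ is of the form $V_\kappa^W$ for some $\kappa$-ground $W$ of $V$ — is the crux. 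Here I would use the uniform ground definability of the earlier Fact in its local form: $U$, being a ground of $V_\kappa$ via a forcing of some size $\delta<\kappa$, is defined over $V_\kappa$ by the uniform formula $\varphi(\cdot,p)$ from a parameter $p\in V_\gamma^U$ for a suitable $\gamma<\kappa$, and this definition refers only to $V_\kappa$; one then checks that $\varphi(\cdot,p)$ defines, over $V$, a $\kappa$-ground $W$ with $V_\kappa\cap W=U$ by verifying the Bukovsk\'y--Hamkins $\delta$-cover and $\delta$-approximation properties for $W$ inside $V$. The only place inaccessibility is needed is exactly here: a function witnessing a failure of covering or approximation has domain and range of size $<\kappa$, hence — $\kappa$ being regular — already lies in $V_\kappa$, so the covering and approximation data that $U$ enjoys relative to $V_\kappa$ automatically lift to data relative to $V$. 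Granting this, $V_\kappa\cap\kapman=\mantle^{V_\kappa}\models\ZFC$.

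For part (ii), work with the $\ZF$-definition $y\in H_{\kappa^+}$ iff $\tc(y)$ is the surjective image of an ordinal $\leq\kappa$ (or empty). Then $H_{\kappa^+}^{\kapman}\subseteq H_{\kappa^+}^W$ for every $\kappa$-ground $W$, since a surjection $h\colon\kappa\twoheadrightarrow\tc(x)$ lying in $\kapman$ lies in $W$ as well. I would next prove the identity $H_{\kappa^+}^{\kapman}=\bigcap_W H_{\kappa^+}^W$; the nontrivial inclusion is that a set $x\in\kapman$ with $|\tc(x)|^W\leq\kappa$ for \emph{every} $\kappa$-ground $W$ admits a surjection $\kappa\twoheadrightarrow\tc(x)$ inside $\kapman$, and this is where the downward directedness of the $\kappa$-grounds together with the regularity of $\kappa$ must be used to amalgamate the witnessing surjections — this is the main obstacle of this part. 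Given the identity, the result follows: each $H_{\kappa^+}^W$ is closed under $<\kappa$-sequences (in $W$, using $\beta<\kappa$, $\kappa$ regular, and $W\models\ZFC$), and the same computation shows that a sequence $\vec x=\langle x_\gamma\mid\gamma<\beta\rangle\in\kapman$ with $\beta<\kappa$ and all $x_\gamma\in\bigcap_W H_{\kappa^+}^W$ lies in $H_{\kappa^+}^W$ for every $W$, hence in $H_{\kappa^+}^{\kapman}$. Fixing a surjection $h\colon\kappa\twoheadrightarrow\tc(\vec x)$ in $\kapman$ and noting $\bigcup_{\gamma<\beta}x_\gamma\subseteq\tc(\vec x)$, the map $\gamma\mapsto h(\min\{\xi<\kappa\mid h(\xi)\in x_\gamma\})$ is definable in $\kapman$ from $h$ and $\vec x$ and is a choice sequence for $\vec x$; this establishes $({<}\kappa,H_{\kappa^+})$-choice in $\kapman$.

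The main obstacle in both parts is the same phenomenon: transferring witnessing objects — the local ground-definition data in (i), the surjections onto transitive closures in (ii) — from the individual $\kappa$-grounds (or from $V_\kappa$, resp.\ $V$) down into the $\kappa$-mantle, which is exactly where the regularity and strong-limitness of $\kappa$ and the directedness of the $\kappa$-grounds are brought to bear.
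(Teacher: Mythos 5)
A preliminary remark: the paper does not prove this statement at all — it is quoted as a Fact from Schlutzenberg — so your argument can only be judged on its own terms, and on those terms both parts reduce the Fact to claims you do not establish, one of which is false as stated. In part (i), the easy inclusion $\mantle^{V_\kappa}\subseteq V_\kappa\cap\kapman$ is fine, but the converse — that every ground $U$ of $V_\kappa$ is of the form $V_\kappa\cap W$ for a $\kappa$-ground $W$ of $V$, equivalently $V_\kappa\cap\kapman\subseteq\mantle^{V_\kappa}$ — is exactly the step you call the crux, and it fails in general. If it held, then in any model of the Ground Axiom (where $\kapman=V$, so $V_\kappa\cap\kapman=V_\kappa$) one would get $V_\kappa\models\mathrm{GA}$ for every inaccessible $\kappa$; but one can add a Cohen real $c$ to $L$ and then run a Reitz-style ground-axiom coding class forcing acting only above $\kappa$, obtaining a model of $\mathrm{GA}$ in which $\kappa$ is still inaccessible and $V_\kappa=L_\kappa[c]$, whose mantle is $L_\kappa\subsetneq V_\kappa$. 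Your proposed verification also does not work on its own terms: a failure of the $\delta$-cover or $\delta$-approximation property for a class $W\subseteq V$ is witnessed by functions and sets of arbitrarily large rank, not of size ${<}\kappa$, so nothing "automatically lifts" from $V_\kappa$ to $V$; moreover the local parameter $p$ determines at most $W\cap V_\kappa$ and gives no candidate for $W$ above rank $\kappa$, and cover plus approximation for a definable class does not by itself make that class a ground of $V$ (one needs an actual poset and generic, e.g.\ via Bukovsk\'y's uniform covering criterion, and first of all a $\ZFC$-class to apply it to). So (i) needs a genuinely different argument; identifying $V_\kappa\cap\kapman$ with the mantle of $V_\kappa$ and citing Usuba inside $V_\kappa$ is not available.

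In part (ii), the derivation of $({<}\kappa,H_{\kappa^+})$-choice from the identity $H_{\kappa^+}^{\kapman}=\bigcap_W H_{\kappa^+}^{W}$ is correct (including the final selector $\gamma\mapsto h(\min\{\xi<\kappa\mid h(\xi)\in x_\gamma\})$), but the identity's hard direction — producing a single surjection from $\kappa$ onto $\tc(x)$ lying in \emph{every} $\kappa$-ground — is precisely what you leave open, and it is the whole content of the statement. The directedness actually available here (Usuba, for $\kappa$ a strong limit) yields a common $\kappa$-ground only for set-indexed families of fewer than $\kappa$ many $\kappa$-grounds, whereas your amalgamation must handle the class of all $\kappa$-grounds at once; and this very paper shows that the analogous amalgamation one step up, $({<}\kappa{+}1,H_{\kappa^+})$-choice, consistently fails at a Mahlo cardinal, so the missing step is delicate and cannot be waved through by "directedness plus regularity." As it stands, both parts have genuine gaps at their central steps.
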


\subsection*{Acknowledgments}
The author thanks Ralf Schindler and Farmer Schlutzenberg for valuable discussions on the present topic. Moreover, the author thanks Ralf Schindler for his support and guidance during his work on this project.\\
Funded by the Deutsche Forschungsgemeinschaft (DFG, German Research
Foundation) under Germany’s Excellence Strategy EXC 2044 390685587, Mathematics M\"unster:
Dynamics - Geometry - Structure.

\section{Overview}
In Section \ref{kappamahlosection}, we will argue that ``$\kappa$ is measurable" cannot be replaced by ``$\kappa$ is Mahlo" in Fact \ref{measurablethm}, as wells as that $({<}\kappa, H_{\kappa^+})$-choice cannot be strengthened to $({<}\kappa+1, H_{\kappa^+})$-choice in Fact \ref{farmerinaccessiblefact}.

\begin{thm}\label{nochoice} 
If $\ZFC$ is consistent with the existence of a Mahlo cardinal, then it is consistent with $\ZFC$  that there is a Mahlo cardinal $\kappa$ so that $\kapman$ fails to satisfy the axiom of choice. In fact we may have
$$\kapman\models``({<}\kappa+1,H_{\kappa^+})\text{-choice fails}".$$
\end{thm}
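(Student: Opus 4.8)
The plan is to make $\kapman$ resemble a symmetric submodel of a forcing extension of a minimal model, one failing choice precisely for $\kappa$-sequences. First I would work in $L$ (or any minimal model with no nontrivial grounds), where $\kappa$ is Mahlo -- harmless relative to the hypothesis, since Mahloness of $\kappa$ reflects down to $L$, in which $\GCH$ holds -- and force with an Easton-support product $\PP=\prod_{\gamma\in I}\QQ_\gamma$ over the set $I$ of inaccessibles below $\kappa$, where $\QQ_\gamma$ is a poset of size $<\kappa$ that adjoins a ``generic sock'' $P_\gamma=\{A_\gamma,B_\gamma\}$ at $\gamma$, designed so that (a) $\QQ_\gamma$ carries a large group of automorphisms fixing the name $\dot P_\gamma$ but moving $\dot A_\gamma$ and $\dot B_\gamma$, and (b) the complete subalgebra of $\RO(\QQ_\gamma)$ fixed by this group is co-small, i.e.\ $\QQ_\gamma$ is recovered over it by a forcing of size $<\kappa$. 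Since $|\PP|=\kappa$, the ground model is not a $\kappa$-ground; write $G$ for the generic, $L[G]$ for the extension, and reindex $I$ as $\langle\gamma_\alpha\mid\alpha<\kappa\rangle$. As $\PP$ is $\kappa$-c.c.\ (a $\Delta$-system argument using that $\kappa$ is inaccessible) and Easton, $\kappa$ remains Mahlo in $L[G]$.

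Next I would pin down the $\kappa$-grounds. Because $L$ has no nontrivial grounds and every inner model of $\ZFC$ contains $L$, the intermediate model theorem shows each $\kappa$-ground of $L[G]$ has the form $L[G\cap\mathbb B]$ for a complete subalgebra $\mathbb B\le\RO(\PP)$. Determining which such $\mathbb B$ give a small quotient -- using that ``small'' coincides with ``bounded'' since $\kappa$ is inaccessible -- one finds that the $\kappa$-grounds are exactly the models obtained from $L[G]$ by retaining, along some bounded set $u\subseteq I$ of coordinates, only the automorphism-invariant information at each $\gamma\in u$. Two consequences: (i) the $\kappa$-grounds are downwards directed (pass to $u_0\cup u_1$), hence $\kapman\models\ZF$ by the Fuchs--Hamkins--Reitz argument, though they are not set-directed (a cofinal family of single coordinates has no common $\kappa$-ground); and (ii) $\kapman$ contains a ``symmetric core'' $\langle Y_\alpha\mid\alpha<\kappa\rangle$, a $\kappa$-sequence of nonempty elements of $H_{\kappa^+}$ extracted as the automorphism-invariant part of the stage-$\gamma_\alpha$ generic, which survives into every $\kappa$-ground since the erasing at any coordinate keeps its symmetric part.

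The crux is that no choice function for $\langle Y_\alpha\mid\alpha<\kappa\rangle$ lies in $\kapman$. Given a name $\dot f$ and a condition forcing that $\dot f$ is such a choice function, $\kappa$-c.c.\ gives that $\dot f$ is coded below each proper initial segment of $\kappa$, so by Mahloness the inaccessible $\gamma=\gamma_\alpha$ at which $\dot f$ is ``locally decided'' form a stationary set, and I pick one lying in whatever club has been accumulated in the argument. Passing to the $\kappa$-ground $W$ that has retained only the invariant information at $\gamma$, $L[G]$ is a generic extension of $W$ by the co-small quotient forcing, on which the automorphism group of $\QQ_\gamma$ acts; running a symmetric-extension argument with $W$ as the ground model -- and exploiting that the behaviour of $f$ at $\gamma$ is exactly what $W$ has forgotten -- one concludes $f\notin W\supseteq\kapman$. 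Hence $\langle Y_\alpha\mid\alpha<\kappa\rangle$ witnesses, in $\kapman$, the failure of $({<}\kappa+1,H_{\kappa^+})$-choice, so in particular $\kapman\not\models\AC$, while $\kappa$ is Mahlo in $L[G]$.

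I expect the main obstacle to be the design of $\QQ_\gamma$ reconciling (a) and (b) while keeping the symmetric core genuinely non-choosable: ``co-small invariant subalgebra'' pushes $\QQ_\gamma$ towards rigidity, whereas ``no ordinal-definable way to separate $A_\gamma$ from $B_\gamma$ even over the rest of $G$'' pushes it towards high symmetry -- and since any two sets of ordinals are separated by, e.g., the lexicographic ordering, the socks cannot be coded by sets of ordinals but must be built in a L\"auchli-style layered manner. A secondary difficulty is ensuring the Mahlo reflection is strong enough that the symmetry present at a single coordinate already suffices to eject an \emph{arbitrary} candidate choice function from the associated $\kappa$-ground, rather than only the ``generic'' ones.
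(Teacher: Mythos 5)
Your plan has a structural flaw at its crux, beyond the design problem you flag yourself. Suppose the ``symmetric core'' $\langle Y_\alpha\mid\alpha<\kappa\rangle$ really is in $\kapman$ and $Y_\alpha=\{A_{\gamma_\alpha},B_{\gamma_\alpha}\}$. Every $\kappa$-ground $W$ is a \emph{transitive model of} $\ZFC$ containing $\kapman$ as a subclass, so $W$ contains the pair and hence both socks at every coordinate, and therefore $W$ contains plenty of choice functions for the whole sequence. Consequently you cannot refute a candidate choice function $f\in\kapman$ by ``ejecting'' it from the single ground $W$ that has ``forgotten'' the distinction at $\gamma$: $W$ has forgotten nothing that $f(\alpha)$ reveals, since $f(\alpha)\in Y_\alpha\subseteq W$, and a symmetry argument over $W$ can at best exclude the \emph{generic} labelling, not an arbitrary $f$ (this is exactly your ``secondary difficulty'', and it is where the argument breaks, not a side issue). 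Relatedly, if the socks are sets of ordinals then $\kapman$ chooses outright via the least element of the symmetric difference, and if they are not, it is unclear how they live in $H_{\kappa^+}$ of an intersection of $\ZFC$ grounds at all; you acknowledge this L\"auchli-style design problem but do not solve it, so the central forcing $\QQ_\gamma$ is never constructed. Finally, the claimed classification of the $\kappa$-grounds as \emph{exactly} the models retaining invariant information along a bounded set of coordinates is asserted, not proved, and as an ``exactly'' statement it is dubious (many complete subalgebras with small quotient are not of this shape); fortunately only one inclusion is needed, and it can be obtained from the $\kappa$-approximation property rather than from an intermediate-model classification.

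The paper's proof avoids both problems by making the ``socks'' large equivalence classes rather than pairs, so that a choice in the mantle aggregates into an object the mantle provably lacks. One forces over $L$ with the ${<}\kappa$-support product $\PP=\prod_{\lambda\in I\cap\kappa}\Add(\lambda,1)$ ($I$ the inaccessibles), proves an Axiom~A$(\kappa,\kappa)$ fusion lemma to keep $\kappa$ Mahlo, and shows via the $\kappa$-approximation property that $\kapman^{L[G]}=\bigcap_{\lambda\in I\cap\kappa}L[G_{>\lambda}]$ and that $\mathcal P(\kappa)\cap\kapman^{L[G]}$ consists exactly of the \emph{fresh} subsets of $\kappa$ (all proper initial segments in $L$). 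Viewing $G$ as a $\kappa\times\kappa$ matrix, each column $c_\beta$ is $\Add(\kappa,1)$-generic over $L$, hence fresh and in the mantle, and the sequence of eventual-coincidence classes $\langle[c_\beta]_\sim\mid\beta<\kappa\rangle$ lies in the mantle because every tail ground sees a tail of every column. If the mantle had a choice sequence $\langle x_\beta\rangle$, then in $L[G]$ the coincidence ordinals $\delta_\beta$ are defined, and Mahloness of $\kappa$ \emph{in the extension} yields an inaccessible $\alpha=I_\alpha$ closed under $\beta\mapsto\delta_\beta$; then $x_\beta(\alpha)=c_\beta(\alpha)$ for all $\beta<\alpha$, so the mantle computes the row $r_\alpha$, i.e.\ the $\Add(\alpha,1)$-generic, which is not fresh --- a contradiction that is global and works against an \emph{arbitrary} choice function, which is precisely what your single-coordinate symmetry argument cannot deliver.
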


In Section \ref{omega1mantlesection}, we will investigate the $\kappa$-mantle for $\kappa=\omega_1$, as well as the $\Gamma$-mantle where $\Gamma=\{\text{Cohen forcing}\}$, denoted by $\cohman$. We will first proof that these mantles are always models of $\ZF$ and will go on to provide a result analogous to Theorem \ref{nochoice}.

\begin{thm}
It is consistent relative to a Mahlo cardinal that both $\oneman$ and $\cohman$ fail to satisfy the axiom of choice. 
\end{thm}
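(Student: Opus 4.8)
The first step is to reduce to a single mantle. By the earlier results of this section, $\oneman$ and $\cohman$ are models of $\ZF$ in any model of $\ZFC$, so only the failure of $\AC$ is at issue. Moreover, a nontrivial $\omega_1$-ground coincides with a Cohen ground: if $V=W[g]$ where $\mathbb Q$ is the countable forcing used and $W\ne V$, then below some condition in $g$ the poset $\mathbb Q$ is atomless, hence forcing equivalent to $\Add(\omega,1)$, so $W$ is a Cohen ground; conversely $\Add(\omega,1)$ is countable. Hence $\oneman=\cohman$ in any model, and it suffices to produce a model in which $\cohman\models\neg\AC$.

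The plan is then to replay the proof of Theorem~\ref{nochoice} with a Mahlo cardinal $\lambda$ in the role of $\kappa$, and afterwards to L\'evy-collapse $\lambda$ to $\omega_1$. So: start with $V_0\models\GCH$ carrying a Mahlo cardinal $\lambda$; run the Theorem~\ref{nochoice} forcing $\mathbb P$ attached to $\lambda$ — a reverse-Easton construction supported on the inaccessibles $\gamma<\lambda$ whose $\gamma$-th factor adds generic subsets of $\gamma$ in the (symmetric) pattern that defeats $({<}\lambda+1,H_{\lambda^+})$-choice in $\mantle_\lambda$ — to obtain $V_1$; then force $\Col(\omega,{<}\lambda)$ over $V_1$ to obtain $V$, in which $\omega_1^V=\lambda$. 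The collapse is what converts the problem: each factor of $\mathbb P$, having size a former inaccessible $\gamma<\lambda=\omega_1^V$, is now a countable atomless poset, i.e.\ Cohen forcing, so each $\lambda$-ground that Theorem~\ref{nochoice} obtains by ``releasing'' coordinates of $\mathbb P$ through ${<}\lambda$-sized forcing becomes, once composed with the collapse generic, a Cohen ground of $V$. The former inaccessibles below $\lambda$ form — by $\lambda$-c.c.\ of the collapse — a stationary $S\subseteq\omega_1=\lambda^V$ with the reflection strength that ``$\lambda$ Mahlo'' provided before, and the canonical $\lambda$-sequence $\langle x_\gamma\mid\gamma\in S\rangle$ witnessing the failure of $({<}\lambda+1,H_{\lambda^+})$-choice becomes an $\omega_1$-sequence of nonempty members of $\Htwo$ in $V$. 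One would then check that it lies in every Cohen ground, hence in $\cohman^V$, while every candidate choice sequence for it is missing from some Cohen ground — by the same reflection argument, now powered by ``$\omega_1$ is the image of a Mahlo cardinal''. This gives $\cohman^V=\oneman^V\models\neg\AC$, indeed $\models\neg({<}\omega_1+1,\Htwo)$-choice.

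The routine parts would be: factoring $\mathbb P\ast\dot\Col(\omega,{<}\lambda)$ so that releasing coordinate $\gamma$ still exhibits $V$ as a Cohen extension of the released model; verifying that the relevant Cohen grounds are directed (re-deriving $\cohman\models\ZF$); and checking $\langle x_\gamma\mid\gamma\in S\rangle\in\cohman^V$. The hard part will be porting the central step of Theorem~\ref{nochoice} through the collapse: for every candidate choice sequence $f$ one must exhibit a Cohen ground of $V$ omitting $f$. Deleting a single coordinate cannot suffice, since a Cohen ground — being a model of $\ZFC$ — has its own choice sequence for $\langle x_\gamma\mid\gamma\in S\rangle$; the ground witnessing $f\notin W$ must disagree with $V$ on an entire stationary set of coordinates on which $f$ is ``non-generic'', and finding such a ground inside $V$ is exactly where the reflection coming from the Mahlo cardinal is spent. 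Two further $\omega_1$-specific subtleties need to be handled: (i) the L\'evy collapse must not create exotic Cohen grounds of $V$ that would pull $\cohman^V$ down to a model of $\AC$, so one needs a characterization of all Cohen grounds of $V$ — ideally, that each factors through a released model together with the collapse generic; and (ii) the stationary and club structure on $\omega_1$ that the reflection argument rests on must survive the collapse, which is what $\lambda$-c.c.\ guarantees.
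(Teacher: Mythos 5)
There is a genuine gap, and it starts in your very first reduction. The claim that $\oneman=\cohman$ in any model is false, and the paper explicitly points this out: an $\omega_1$-ground $W$ may have $\omega_1^W<\omega_1^V$, in which case the forcing taking $W$ to $V$ has size $<\omega_1^V$ but is \emph{uncountable in $W$} (the typical case being $\Col(\omega,\mu)$ with $\mu=\omega_1^W$), so $W$ is not a Cohen ground. Since every Cohen ground is an $\omega_1$-ground, the true containment is $\oneman\subseteq\cohman$, and therefore producing a model with $\cohman\models\neg\AC$ does \emph{not} by itself yield $\oneman\models\neg\AC$: the witnessing ``un-choosable'' sequence must be shown to lie in every $\omega_1$-ground, including the collapse-type grounds your argument never sees. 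The paper handles both mantles simultaneously by working with the auxiliary model $N=\bigcap_{\alpha<\kappa}L[G_{>\alpha}]$, showing $\oneman\subseteq\cohman\subseteq N$ and that all three have the same $\mathcal P(\kappa)$ (the fresh sets, via Solovay's mutual-genericity fact), so that a wellorder in either mantle would live in $N$ and contradict the argument carried out there.

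Beyond that, your construction is genuinely different from the paper's and its hard steps are left undone by your own admission. The paper does not force the Mahlo-case poset and then L\'evy-collapse; it forces once, over $L$, with the ${<}\kappa$-support product $\prod_{\alpha<\kappa}\Col(\omega,\alpha)$, proves this satisfies $\AxA(\kappa,\kappa)$ (this is the only use of Mahloness, via $\diamondsuit$ on regular $\lambda<\kappa$), so that $\kappa=\omega_1^{L[G]}$ and every $\omega$-sequence of ordinals appears in some $L[G_{\leq\alpha}]$; the fuzzy sequence is then an $\omega$-length sequence $\langle[d_n]_\sim\mid n<\omega\rangle$ built from the collapsing functions, and the boundedness of the agreement points $\delta_n$ uses only that $\omega_1$ has uncountable cofinality, not any residual largeness of $\kappa$ in the extension. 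In your two-step plan, exactly the steps you defer --- characterizing all Cohen (and $\omega_1$-) grounds of $L[G][H]$ so that no ``exotic'' ground shrinks the mantle's $\mathcal P(\omega_1)$ below the fresh sets, verifying the witnessing sequence survives into every such ground, and running the reflection argument after Mahloness has been destroyed by the collapse --- are the entire mathematical content, and without analogues of the paper's $\AxA(\kappa,\kappa)$ lemma and the Solovay intersection argument it is not clear they can be carried out. As it stands the proposal is a plausible program, not a proof.
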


In Section \ref{successorcardinalsection}, we will generalize this to any successor of a regular cardinal.

\begin{thm}
Suppose that 
\begin{enumerate}[label=$(\roman*)$]
    \item $\GCH$ holds,
    \item the Ground Axiom\footnote{The Ground Axiom\index{Ground Axiom} states that there is no nontrivial ground. See \cite{reitga} for more information on this axiom.} holds and
    \item $\kappa$ is a regular uncountable cardinal.
\end{enumerate}
Then there is a cardinal preserving generic extension in which the $\kappa^+$-mantle fails to satisfy the axiom of choice.
\end{thm}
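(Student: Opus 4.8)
The plan is to produce the required model by forcing over $V$ with a carefully designed ${<}\kappa$-supported iteration $\PP$ of length $\kappa^{+}$, and to read off the failure of choice in the $\kappa^{+}$-mantle of $V[G]$ from the structure of its $\kappa^{+}$-grounds, which the Ground Axiom makes transparent. First I would record the following reduction. If $V$ satisfies the Ground Axiom and $V[G]$ is a set-generic extension of $V$ via $\PP\in V$, then every ground $W$ of $V[G]$ is an intermediate model of the form $V[G\cap\mathbb{B}]$ for some complete subalgebra $\mathbb{B}\leq\RO^{V}(\PP)$: indeed $W$ and the ground $V$ have a common ground $U$ by Usuba's downward directedness theorem, $U$ is then a ground of $V$ and hence equals $V$ by the Ground Axiom, so $V\subseteq W\subseteq V[G]$ with both steps forcing, and the intermediate model theorem applies. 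Consequently $\succman^{V[G]}$ is the intersection of all $V[G\cap\mathbb{B}]$ for which the quotient $\RO^{V}(\PP)/\mathbb{B}$ is, in $V[G\cap\mathbb{B}]$, forcing-equivalent to a poset of size ${<}\kappa^{+}$; the whole problem thus reduces to combinatorial control of such ``small-quotient'' complete subalgebras.

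Now the forcing. Let $\PP=\langle\PP_{\alpha},\dot{\mathbb{Q}}_{\alpha}\mid\alpha<\kappa^{+}\rangle$ be the ${<}\kappa$-supported iteration in which $\dot{\mathbb{Q}}_{\alpha}$ is forced to be a ${<}\kappa$-closed poset of size ${\leq}\kappa$ that does two things: (a) using $\Add(\kappa,1)\times\Add(\kappa,1)$, it adds a pair $x_{\alpha}:=\{\dot a_{\alpha},\dot b_{\alpha}\}$ of mutually generic subsets of $\kappa$, equipped with the involution $\pi_{\alpha}$ that swaps the two Cohen factors — so $\pi_{\alpha}$ fixes the unordered pair $x_{\alpha}$ but exchanges its two members; and (b) for every $\gamma<\alpha$, it appends a fresh ${<}\kappa$-closed ``robust coding'' poset — an appropriate ${<}\kappa$-closed form of almost-disjoint coding — which writes the unordered pair $x_{\gamma}$ into a new generic subset of $\kappa$ in such a way that $x_{\gamma}$ becomes computable over $V$ from the stage-$\alpha$ generic alone. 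Each coding block is definable from the unordered pair it codes and hence is invariant under $\pi_{\alpha}$, so every $\pi_{\alpha}$ extends to an automorphism of $\PP$ acting as the identity off stage $\alpha$. Since $\kappa$ is regular and $\GCH$ holds we have $\kappa^{<\kappa}=\kappa$, so each initial segment $\PP_{\alpha}$ has size ${\leq}\kappa$ and a $\Delta$-system argument gives that $\PP$ is $\kappa^{+}$-cc; together with the ${<}\kappa$-closure coming from the iterands and the support, $\PP$ preserves all cardinals and cofinalities (there being no cardinals strictly between $\kappa$ and $\kappa^{+}$). Let $A=\{x_{\alpha}\mid\alpha<\kappa^{+}\}\in V[G]$; $A$ is a family of two-element sets, and we will show it has no choice function in $\succman^{V[G]}$ while lying in $\succman^{V[G]}$ itself.

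For the first of these, fix $\gamma_{0}<\kappa^{+}$ and let $\mathbb{B}_{0}\leq\RO^{V}(\PP)$ be the fixed-point subalgebra of the extension of $\pi_{\gamma_{0}}$; its quotient is essentially the quotient of $\Add(\kappa,1)\times\Add(\kappa,1)$ by its $\pi_{\gamma_{0}}$-fixed part, which is forcing-equivalent to a poset of size ${\leq}\kappa$, so $W_{0}:=V[G\cap\mathbb{B}_{0}]$ is a $\kappa^{+}$-ground. As $\pi_{\gamma_{0}}$ fixes $W_{0}$ pointwise while swapping $a_{\gamma_{0}}$ and $b_{\gamma_{0}}$, neither member of $x_{\gamma_{0}}$ lies in $W_{0}$; hence there is no $f\in W_{0}$ with $f(x_{\gamma_{0}})\in x_{\gamma_{0}}$, and since $\succman^{V[G]}\subseteq W_{0}$ the family $A$ has no choice function in $\succman^{V[G]}$. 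For the second — the technical heart — let $\mathbb{B}$ be any small-quotient complete subalgebra of $\RO^{V}(\PP)$. Each $x_{\gamma}$ is coded by $\PP$ at all $\kappa^{+}$ many stages $\alpha>\gamma$, via coding blocks that are mutually generic over one another; a mutual-genericity analysis shows that if $\mathbb{B}$ contained none of the corresponding coding subalgebras then the quotient $\RO^{V}(\PP)/\mathbb{B}$ would have to restore $\kappa^{+}$ many mutually generic ${\leq}\kappa$-sized pieces of $G$, which no poset of size ${<}\kappa^{+}$ can do. Hence for each $\gamma$ some coding block for $x_{\gamma}$ is available in $V[G\cap\mathbb{B}]$, so every $x_{\gamma}$ — and then, by recovering the entire sequence $\langle x_{\gamma}\mid\gamma<\kappa^{+}\rangle$ from the part of the generic that $\mathbb{B}$ retains, the set $A$ itself — belongs to $V[G\cap\mathbb{B}]$. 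Therefore $A\in\succman^{V[G]}$, and the axiom of choice fails in $\succman^{V[G]}$. (One also checks, for instance via the downward directedness of the $\kappa^{+}$-grounds visible from the description above together with the Fuchs--Hamkins--Reitz criterion, that $\succman^{V[G]}\models\ZF$, so that this is a genuine failure of choice.)

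The main obstacle is the robustness claim $A\in\succman^{V[G]}$. It requires analysing \emph{arbitrary} complete subalgebras of $\RO^{V}(\PP)$ with small quotient — in particular, ruling out ``diagonal'' subalgebras that might pin down some $x_{\gamma}$ without containing any single coding block — and turning the heuristic ``a ${<}\kappa^{+}$-sized quotient cannot restore $\kappa^{+}$ many mutually generic pieces of the generic'' into a rigorous mutual-genericity argument. Arranging the coding posets and bookkeeping the mutual genericity across the $\kappa^{+}$ stages of the iteration so as to support this argument — while keeping the iteration ${<}\kappa$-closed and $\kappa^{+}$-cc — is the crux of the construction.
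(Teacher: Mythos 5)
There is a fatal structural flaw in your plan, independent of the unproven ``technical heart''. Your choice-violating family $A$ consists of two-element sets of subsets of $\kappa$, and such a family can never witness a failure of choice in any transitive model of $\ZF$: given distinct $a,b\subseteq\kappa$, let $\delta=\min(a\,\triangle\, b)$ and select the one of $a,b$ containing $\delta$; this is a parameter-free definable choice function on all pairs of distinct subsets of $\kappa$, so if $A\in\succman^{V[G]}$ then $\succman^{V[G]}$ automatically has a choice function for $A$. Relatedly, your two main claims contradict each other: grounds and the mantle are transitive, so $A\in\succman^{V[G]}$ forces every $x_\gamma$, and hence both $a_\gamma$ and $b_\gamma$, into \emph{every} $\kappa^{+}$-ground; thus the ground $W_0$ cannot omit the members of $x_{\gamma_0}$. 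Indeed, in your own design the later coding blocks are $\pi_{\gamma_0}$-invariant, hence lie in $\mathbb{B}_0$, hence their generics are in $W_0$ and already compute $x_{\gamma_0}$ there. So the automorphism argument for ``no choice function'' fails exactly when the robustness claim $A\in\succman^{V[G]}$ succeeds, and proving the latter (which you only sketch heuristically) would refute the former. The ``pairs of socks'' device only works for structureless objects in symmetric submodels; it cannot work for sets of ordinals inside an intersection of $\ZFC$ grounds.

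For contrast, the paper's proof avoids both problems: it forces over a $\GCH+$Ground Axiom model with the ${<}\kappa^{+}$-support \emph{product} $\PP=\prod_{\alpha<\kappa^{+}}\Add(\kappa,1)$, shows $\kappa^{+}$ is preserved (and every $\kappa$-sequence of ordinals is captured by some $V[G_{\leq\alpha}]$) via a strategic Axiom A argument using $\diamondsuit_\kappa$, and works with the auxiliary model $N=\bigcap_{\alpha<\kappa^{+}}V[G_{>\alpha}]$, whose subsets of $\kappa^{+}$ coincide with those of the $\kappa^{+}$-mantle (fresh sets, via Solovay's lemma on mutual generics and the $\kappa^{+}$-approximation property). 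The choice-violating object is not a family of pairs but the sequence of equivalence classes $\langle[d_\beta]_\sim\mid\beta<\kappa\rangle$ of the rows of the generic matrix under eventual coincidence -- classes admitting no definable transversal -- and the contradiction is derived from a \emph{hypothetical} wellorder: a choice sequence in $N$ would have its points of eventual agreement bounded by some $\delta_\ast<\kappa^{+}$ (here cardinal preservation is used), and the slice $\langle x_\beta(\delta_\ast)\mid\beta<\kappa\rangle\in N$ would be $\Add(\kappa,1)$-generic over the ground model, contradicting that $N$ has no new subsets of $\kappa$. If you want to salvage your approach, you would have to replace the unordered pairs by such ``wide'' classes and derive the failure of choice from a putative choice object rather than exhibiting a single ground in which selection fails.
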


In this case however, it is not known if the $\kappa^+$-mantle is a model of $\ZF$ in general. The proof of all these three theorems follows a similar pattern, though the details differ from case to case and it seems that we cannot employ a fully unified approach.

\section{The Axiom of Choice May Fail in $\kapman$}

\subsection{The case ``$\kappa$ is Mahlo"}\label{kappamahlosection}

Here, we will construct a model where the $\kappa$-mantle for a Mahlo cardinal $\kappa$ does not satisfy the axiom of choice. We will start with $L$ and assume that $\kappa$ is the least Mahlo there. The final model will be a forcing extension of $L$ by
$$\PP=\prod_{\lambda\in I\cap\kappa}^{\text{${<}\kappa$-support}} \Add(\lambda, 1)$$
where $I$ is the class of all inaccessible cardinals. We define $\PP$ to be a product forcing and not an iteration (in the usual sense), as we want to generate many $\kappa$-grounds. Let $G$ be $\PP$-generic over $L$. We will show that $\kappa$ is still Mahlo in $L[G]$ and that $\kapman^{L[G]}$ does not satisfy the axiom of choice. We remark that, would we start with a model in which $\kappa$ is measurable, $\PP$ would provably force $\kappa$ to not be measurable.
\medskip\\
First, let's fix notation. For $\lambda<\kappa$, we may factor $\PP$ as $\PP_{\leq\lambda}\times\PP_{>\lambda}$ where in each case we only take a product over all $\gamma\in I\cap\kappa$ with $\gamma\leq\lambda$ and $\gamma>\lambda$ respectively. Observe that $\PP_{>\lambda}$ is a ${<}\kappa$-support product while $\PP_{\leq\lambda}$ is a full support product. We also factor $G$ as $G_{\leq\lambda}\times G_{>\lambda}$ accordingly. For $\lambda\in I\cap\kappa$ we denote the generic for $\Add(\lambda, 1)^L$ induced by $G$ as $g_\lambda$. In addition to this, for $\alpha\leq\kappa$ we denote the $\alpha$-th inaccessible cardinal by $I_\alpha$. \\
For $\alpha<\kappa$ let $E_\alpha\colon \kappa\rightarrow 2$ be the function induced by $g_{I_\alpha}$.  It will be convenient to think of $G$ as a $\kappa\times\kappa$-matrix $M$ which arises by stacking the maps $(E_\alpha)_{\alpha<\kappa}$ on top of each other, starting with $E_{I_0}$ and proceeding downwards, and then filling up with $0$'s to produce rows of equal length $\kappa$. Let us write 
$$e_{\alpha, \beta}=\begin{cases}
E_\alpha(\beta) & \text{if }\beta< I_\alpha\\
0 & \text{else.}\\
\end{cases}
$$

The $(e_{\alpha, \beta})_{\alpha, \beta<\kappa}$ are the entries of $M$:
\iffalse
$$M=\begin{pmatrix}
e_{0, 0} & e_{0, 1} & e_{0, 2} & \cdots & 0 & \cdots & 0  & \cdots & 0 & \cdots \\
e_{1, 0} & e_{1, 1} & e_{1, 2} & \cdots & e_{1, I_0} & \cdots & 0 & \cdots & 0 & \cdots\\
\vdots   & \vdots   & \vdots   & \vdots & \vdots          & \ddots & 0 & \cdots & 0 & \cdots\\
e_{\alpha, 0} & e_{\alpha, 1} & e_{\alpha, 2} & \cdots & e_{\alpha, I_0} & \cdots & e_{\alpha, I_1} & \cdots & 0 & \cdots \\
\vdots & \vdots & \vdots & \vdots & \vdots & \vdots & \vdots & \ddots & 0 & \cdots \\
\end{pmatrix}
$$
\fi

\begin{tikzpicture}
\matrix (m)[matrix of math nodes, left delimiter=(,right delimiter=), anchor = west, inner sep=6pt] at (0, 0){
e_{0, 0} & e_{0, 1} & e_{0, 2} & \cdots & 0 & \cdots & 0  & \cdots & 0 & \cdots \\
e_{1, 0} & e_{1, 1} & e_{1, 2} & \cdots & e_{1, I_0} & \cdots & 0 & \cdots & 0 & \cdots\\
\vdots   & \vdots   & \vdots   & \ddots & \vdots          & \ddots & 0 & \cdots & 0 & \cdots\\
e_{\alpha, 0} & e_{\alpha, 1} & e_{\alpha, 2} & \cdots & e_{\alpha, I_0} & \cdots & e_{\alpha, I_1} & \cdots & 0 & \cdots \\
\vdots & \vdots & \vdots & \ddots & \vdots & \vdots & \vdots & \ddots & 0 & \cdots \\
};
\node[red,fit=(m-2-1)(m-2-10),inner sep=1pt, label={[xshift = 0.15cm, text=red]left:$r_1$}, draw, rounded corners]{};
\node[blue,fit=(m-1-3)(m-5-3),inner sep=1pt,label={[text=blue]above:$c_2$}, draw, rounded corners]{};
\node[teal,fit=(m-4-1)(m-5-10),inner sep=1pt,label={[text=teal]below:$M_{\geq\alpha}$}, draw, rounded corners]{};
\node(M) at (-.8, 0) {$M=$};
\end{tikzpicture}

We will give the $\alpha$-th row of $M$ the name $r_\alpha$ and we denote the $\beta$-th column of $M$ by $c_\beta$. One trivial but key observation is that $r_\alpha$ carries the same information as $g_{I_\alpha}$.

We will be frequently interested in the matrix $M$ with its first $\alpha$ rows deleted for some $\alpha<\kappa$, so we will give this matrix the name $M_{\geq\alpha}$. Note that $M_{\geq\alpha}$ corresponds to the generic $G_{\geq I_\alpha}$. Finally observe that we may think of conditions in $\mathbb P$ as partial matrices that approximate such a matrix $M$ in the sense that they already have the trivial $0$'s in the upper right corner, in any row $\alpha<\kappa$ they have information for ${<}I_\alpha$ many $\beta<I_\alpha$ on whether $e_{\alpha, \beta}$ is $0$ or $1$ and they contain non-trivial information in less than $\kappa$-many rows.

\begin{lemm}\label{sameinaccessibleslemm}
$L$ and $L[G]$ have the same inaccessibles.
\end{lemm}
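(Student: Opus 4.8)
The plan is to show both inclusions of the sets of inaccessibles. Since $\PP$ adds subsets, cardinals could only be destroyed by collapsing, so the nontrivial content is that no inaccessible of $L$ below $\kappa$ (or equal to $\kappa$, but that is handled by the separate Mahlo-preservation argument) is ruined, and conversely that no new inaccessible appears in $L[G]$. First I would observe that $\PP$ has a natural factoring: for any inaccessible $\lambda \le \kappa$ of $L$, write $\PP = \PP_{<\lambda} \times \PP_{\geq\lambda}$, where $\PP_{<\lambda} = \prod^{\text{full support}}_{\gamma \in I \cap \lambda} \Add(\gamma,1)$ and $\PP_{\geq\lambda}$ is the $<\kappa$-support product of $\Add(\gamma,1)$ over $\gamma \in I \cap [\lambda,\kappa)$. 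The key point is the size/chain-condition bound: $\PP_{<\lambda}$ has size $\le \lambda$ (it is a product of $<\lambda$ forcings each of size $<\lambda$, using that $\lambda$ is inaccessible in $L$, hence $\GCH$ gives the right cardinal arithmetic), actually $|\PP_{<\lambda}| \le \lambda$ and it is $\lambda$-cc trivially by size; and $\PP_{\geq\lambda}$ is ${<}\lambda$-closed (even ${<}\kappa'$-closed for $\kappa'$ the least inaccessible in $[\lambda,\kappa)$, but ${<}\lambda$-closed suffices, since the least factor is $\Add(\lambda,1)$ which is ${<}\lambda$-closed, wait — $\Add(\lambda,1)$ is ${<}\lambda$-closed, and the $<\kappa$-support product of ${<}\lambda$-closed forcings over index set with least element $\ge\lambda$ is ${<}\lambda$-closed).

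Now fix an inaccessible $\lambda$ of $L$ with $\lambda \le \kappa$. To see $\lambda$ stays inaccessible in $L[G]$: regularity of $\lambda$ is preserved because $\PP_{<\lambda}$ is $\lambda$-cc (size $\le\lambda$ and actually $<\lambda$-cc won't hold, but $\lambda$-cc does since each condition lies in some $V_\beta^L$, $\beta<\lambda$, and there are $\le\lambda$ of them — one needs the antichain argument via $\GCH$), so no $<\lambda$-sequence cofinal in $\lambda$ is added by $\PP_{<\lambda}$, and $\PP_{\geq\lambda}$ is ${<}\lambda$-closed so adds no new bounded subsets of $\lambda$ at all, in particular no short cofinal sequence; combining, $\lambda$ remains regular. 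For the strong limit property: given $\mu < \lambda$ in $L[G]$, $\mathcal P(\mu)^{L[G]}$ — again split the forcing, $\PP_{\geq\lambda}$ adds no subsets of $\mu$, and $\PP_{<\lambda}$ has size $\le\lambda$, so $|\mathcal P(\mu)^{L[G]}| \le |\mathcal P(\mu)^L| \cdot |\PP_{<\lambda}| \le \mu^+ \cdot \lambda = \lambda$ wait I need $<\lambda$: actually by $\lambda$-cc, nice names for subsets of $\mu$ are bounded below $\lambda$ in rank, so $2^\mu < \lambda$ in $L[G]$ too. Hence $\lambda$ is inaccessible in $L[G]$, giving $I^L \cap (\kappa+1) \subseteq I^{L[G]}$.

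Conversely, suppose $\lambda \le \kappa$ is inaccessible in $L[G]$; I must show $\lambda \in I^L$. Cardinals and cofinalities below $\kappa^+$ are preserved by $\PP$ (each relevant piece is either small-cc or highly closed, and one checks the global product is, say, $\kappa^+$-cc by a $\Delta$-system argument using $\GCH$ in $L$, together with the fact that the ${<}\kappa$-support keeps things tame — this is where the ${<}\kappa$-support rather than full support matters), so $\lambda$ is already a regular cardinal in $L$. If $\lambda$ were a successor cardinal $\mu^+$ in $L$, it would remain $\mu^+$ in $L[G]$, contradicting inaccessibility; so $\lambda$ is a limit cardinal in $L$. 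If $\lambda$ were not strong limit in $L$, say $2^\mu \ge \lambda$ in $L$ for some $\mu<\lambda$, then since $L \subseteq L[G]$ we would have $2^\mu \ge \lambda$ in $L[G]$, again a contradiction. Hence $\lambda$ is inaccessible in $L$.

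The main obstacle, and the place requiring the most care, is the chain-condition/closure bookkeeping for the \emph{product} $\PP$ — in particular verifying that $\PP_{<\lambda}$ is $\lambda$-cc (this genuinely uses that $\lambda$ is inaccessible in $L$ and the $\GCH$, via a $\Delta$-system lemma on the full-support product of small forcings) and that the global forcing $\PP$ preserves cardinals and cofinalities up to $\kappa^+$, which is where the choice of ${<}\kappa$-support (rather than full support) on the tail is essential. Once those two structural facts are in hand, both inclusions are routine factoring arguments as sketched above. I would also remark that the case $\lambda = \kappa$ of the forward inclusion ($\kappa$ stays inaccessible, indeed Mahlo) is presumably proved in the immediately following lemma, so here one may restrict attention to $\lambda < \kappa$ and cite that separately, or absorb it by noting the same split applies with $\PP_{\geq\kappa}$ trivial.
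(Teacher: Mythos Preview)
Your approach has a genuine gap. You claim that $\PP_{<\lambda}$ has size ${\leq}\lambda$ (``a product of ${<}\lambda$ forcings each of size ${<}\lambda$'') and is therefore $\lambda$-cc. This fails whenever $\lambda<\kappa$ is a limit of inaccessibles---and since $\kappa$ is Mahlo, such $\lambda$ abound. In that case $|I\cap\lambda|$ may well equal $\lambda$, and $\PP_{<\lambda}$ is a \emph{full}-support product over $\lambda$ many coordinates (the ${<}\kappa$-support condition is vacuous on an index set of size ${<}\kappa$); its size is $2^\lambda$, not ${\leq}\lambda$. Nor is it $\lambda$-cc: for each $f\colon I\cap\lambda\to 2$ let $p_f(\gamma)=\langle f(\gamma)\rangle$; then $\{p_f\mid f\in{}^{I\cap\lambda}2\}$ is an antichain of size $2^\lambda$. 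A $\Delta$-system argument cannot rescue this, since the supports are all of $I\cap\lambda$.

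The paper avoids the problem by never factoring at $\lambda$ itself. To preserve a double successor $\delta^{++}$ (and hence all limit cardinals), it factors at an arbitrary $\delta<\kappa$: the tail $\PP_{>\delta}$ is ${\leq}\delta^{++}$-closed (the least remaining factor is $\Add(\gamma,1)$ for the least inaccessible $\gamma>\delta$, and any such $\gamma$ exceeds $\delta^{++}$), while $|\PP_{\leq\delta}|\leq\delta^{\delta}=\delta^{+}$ by $\GCH$ in $L$---here the size bound is genuine because there are at most $\delta$ many coordinates, each of size ${\leq}\delta$. To preserve regularity of an inaccessible $\lambda$, the paper assumes toward a contradiction that $\cof^{L[G]}(\lambda)=\delta<\lambda$ and factors at $\delta$: $\PP_{>\delta}$ is ${\leq}\delta$-closed so $\lambda$ is still regular in $L[G_{>\delta}]$, and $|\PP_{\leq\delta}|\leq\delta^+<\lambda$, so this small forcing cannot add a short cofinal sequence in $\lambda$ either. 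The converse direction (no new inaccessibles appear in $L[G]$) is immediate and the paper does not spell it out.
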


\begin{proof}
First, we show that all limit cardinals of $L$ are limit cardinals in $L[G]$. It is enough to prove that all double successors $\delta^{++}$ are preserved. This is obvious for $\delta\geq\kappa$ as $\PP$ has size $\kappa$. For $\delta<\kappa$, $\PP_{>\delta}$ is ${\leq}\delta^{++}$-closed so that all cardinals $\leq\delta^{++}$ are preserved in $L[G_{>\delta}]$. Furthermore, $\PP_{<\delta}$ has size at most $\delta^{+}$ in $L[G_{>\delta}]$ by $\GCH$ in $L$.  Hence $\delta^{++}$ is still a cardinal in $L[G]$.
\smallskip\\
Now we have to argue that all $\lambda\in I$ remain regular. Again, this is clear if $\lambda>\kappa$. On the other hand, assume $\delta\coloneqq \cof(\lambda)^{L[G]}<\lambda$. As $\PP_{>\delta}$ is ${\leq}\delta$-closed, $\lambda$ is still regular in $L[G_{>\delta}]$. Hence, a witness to $\cof(\lambda)=\delta$ must be added in the extension of $L[G_{{>}\delta}]$ by $\PP_{\leq\delta}$. But this forcing has size $<\lambda$ in $L[G_{{>}\delta}]$ and thus could not have added such a sequence.
\end{proof}

In fact, $\PP$ does not collapse any cardinals (if $V=L$), but some more work is required to prove this. This is, however, not important for our purposes. Next, we aim to show that $\kappa$ remains Mahlo in $L[G]$. 

To prove this, it is convenient to introduce a generalization of Axiom A.

\begin{defn}\label{generalaa}
For $\kappa$ an ordinal, $\lambda$ a cardinal we say that a forcing $\QQ$ satisfies Axiom A$(\kappa, \lambda)$,\index{Axiom A@Axiom A$(\kappa,\lambda)$} abbreviated by $\mathrm{AA}(\kappa, \lambda)$, if there is a sequence $\langle\leq_\alpha\mid\alpha<\kappa\rangle$ of partial orders on $\QQ$ so that
\begin{enumerate}[label=$(\mathrm{AA}.\roman*)$]
\item\label{AAcond1} $\forall \alpha\leq \beta<\kappa\ \leq_{\beta}\subseteq\leq_\alpha\subseteq\leq_\QQ$,
\item\label{AAcond2} for all antichains $A$ in $\QQ$, $\alpha<\kappa$ and $p\in\QQ$ there is $q\leq_\alpha p$ so that $\vert\{a\in A\mid a\| q\}\vert<\lambda$ and
\item\label{AAcond3} for all $\beta<\kappa$ if $\vec p=\langle p_\alpha\mid\alpha<\beta\rangle$ satisfies $p_{\gamma}\leq_\alpha p_\alpha$ for all $\alpha<\gamma<\beta$ then there is a fusion $p_\beta$ of $\vec p$, that is $p_\beta\leq_\alpha p_\alpha$ for all $\alpha<\beta$. 
\end{enumerate}
\end{defn}

\begin{rem}
The usual Axiom A is thus Axiom A$(\omega+1, \omega_1)$.
\end{rem}

\begin{prop}\label{axastatpresprop}
Suppose $\lambda$ is regular uncountable cardinal and $\QQ$ satisfies $\AxA(\lambda,\lambda)$. Then $\QQ$ preserves stationary subsets of $\lambda$.
\end{prop}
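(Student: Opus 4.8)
The plan is to adapt the standard proof that Axiom A (proper, in fact) forcings preserve stationary subsets of $\omega_1$, running the fusion argument at the cardinal $\lambda$ instead. Fix a stationary set $S\subseteq\lambda$, a condition $p\in\QQ$, and a $\QQ$-name $\dot C$ with $p\forces$``$\dot C$ is club in $\lambda$''. It suffices to find $q\leq_\QQ p$ and $\xi\in S$ with $q\forces\xi\in\dot C$; in fact I would show the set of such $\xi$ is stationary by starting the whole argument below an arbitrary club-name in $V$, or simply by noting that since $p$ was arbitrary and $S$ arbitrary stationary, genericity gives $\forces \dot C\cap S\neq\emptyset$. The core is a single-$\xi$ hit, so I focus on that.

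The key step is to build, by recursion of length $\lambda$, a decreasing-in-the-right-sense sequence $\langle p_\alpha\mid\alpha<\lambda\rangle$ together with an increasing continuous sequence of ordinals $\langle\delta_\alpha\mid\alpha<\lambda\rangle$ below $\lambda$ such that $p_{\alpha+1}\leq_\alpha p_\alpha$, the fusion hypothesis of \ref{AAcond3} is maintained at limits by taking $p_\lambda$-style fusions of the tail, and $p_{\alpha+1}\forces\dot C\cap(\delta_\alpha,\delta_{\alpha+1})\neq\emptyset$. To arrange the last clause I use \ref{AAcond2}: given $p_\alpha$ and $\delta_\alpha$, the condition $p_\alpha$ forces $\dot C$ to be unbounded, so for each potential value there is a maximal antichain of conditions deciding a member of $\dot C$ above $\delta_\alpha$; applying \ref{AAcond2} with parameter $\alpha$ yields $q\leq_\alpha p_\alpha$ meeting fewer than $\lambda$ elements of this antichain, hence (since those fewer-than-$\lambda$ decided values are bounded in the regular $\lambda$) I may pick $\delta_{\alpha+1}<\lambda$ above all of them and above $\delta_\alpha$; then $q\forces \dot C\cap(\delta_\alpha,\delta_{\alpha+1}]\neq\emptyset$ — wait, more carefully, $q$ forces that \emph{some} compatible extension puts a point of $\dot C$ below $\delta_{\alpha+1}$, and since $\dot C$ is forced unbounded this is automatic; the honest bookkeeping is: enumerate in advance, using a fixed wellorder, a task of deciding ``the least element of $\dot C$ above $\check\eta$'' for relevant $\eta$, and close off. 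I set $p_{\alpha+1}=q$. At limit stages $\gamma<\lambda$ take $\delta_\gamma=\sup_{\alpha<\gamma}\delta_\alpha$ and $p_\gamma$ a fusion of $\langle p_\alpha\mid\alpha<\gamma\rangle$ via \ref{AAcond3}, which is legitimate because \ref{AAcond1} guarantees $p_{\alpha'}\leq_\alpha p_\alpha$ for all $\alpha<\alpha'<\gamma$.

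Having built the sequences of full length $\lambda$, let $\xi=\sup_{\alpha<\lambda}\delta_\alpha$; the map $\alpha\mapsto\delta_\alpha$ is continuous and increasing, so $\{\delta_\alpha\mid\alpha<\lambda\}$ is club in $\lambda$ — in particular I can arrange, by interleaving, that $\xi\in S$ using that $S$ is stationary (intersect the club of closure points $\delta_\alpha$ with $S$; pick the recursion so that $\xi$ lands in $S$, e.g. by passing to the club of limit points and choosing a limit ordinal $\alpha^\ast<\lambda$ with $\delta_{\alpha^\ast}\in S$, then stopping the construction there with a final fusion $p_{\alpha^\ast}$). Then for the fusion $p^\ast$ of $\langle p_\alpha\mid\alpha<\alpha^\ast\rangle$ (or $p_{\alpha^\ast}$ itself), we have $p^\ast\leq_\QQ p$ and $p^\ast\forces \dot C\cap(\delta_\alpha,\delta_{\alpha+1})\neq\emptyset$ for cofinally many $\alpha<\alpha^\ast$, whence $p^\ast\forces \sup(\dot C\cap\delta_{\alpha^\ast})=\delta_{\alpha^\ast}$, so $p^\ast\forces\delta_{\alpha^\ast}\in\dot C$ since $\dot C$ is forced closed, and $\delta_{\alpha^\ast}\in S$. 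As $p$ and $S$ were arbitrary, $\QQ$ preserves stationary subsets of $\lambda$.

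The main obstacle is purely bookkeeping: ensuring that a single length-$\lambda$ fusion simultaneously (i) respects \ref{AAcond3} at every limit — which needs the coherence $p_{\alpha'}\leq_\alpha p_\alpha$ across the whole tail, delivered by \ref{AAcond1} — and (ii) decides enough of $\dot C$ that the supremum $\delta_{\alpha^\ast}$ is forced into $\dot C$, while (iii) steering $\delta_{\alpha^\ast}$ into the stationary set $S$. Point (ii) is where regularity and uncountability of $\lambda$ are essential: \ref{AAcond2} only bounds the antichain-overlap by $<\lambda$, and we need that set of decided values to be \emph{bounded} in $\lambda$, which fails for singular $\lambda$; uncountability is what makes the limit stages of the construction (and the club-ness of $\{\delta_\alpha\}$) go through. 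Everything else is the routine proper-forcing fusion template.
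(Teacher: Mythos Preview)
Your proposal is correct and follows essentially the same fusion-against-a-club template as the paper's proof. The only structural difference is that the paper first isolates a separate Claim---using an $\omega$-length sub-fusion together with \ref{AAcond2} to produce, for any $q$ and $\alpha$, some $r\leq_\alpha q$ and $\gamma$ with $r\Vdash\check\gamma\in\dot C$---and then runs the main length-$\lambda$ fusion to build a ground-model club $D$ disjoint from $S$ for a contradiction; you instead fold the antichain-bounding step directly into each successor stage of a single length-$\lambda$ fusion, obtaining $p_{\alpha+1}\Vdash\dot C\cap(\delta_\alpha,\delta_{\alpha+1})\neq\emptyset$, and appeal to closure of $\dot C$ only once at the end after locating $\alpha^\ast$ with $\delta_{\alpha^\ast}\in S$. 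Both organizations work and use the hypotheses in the same places (regularity of $\lambda$ to bound the ${<}\lambda$-many decided values, \ref{AAcond3} for fusions at limits, \ref{AAcond1} for the coherence $p_{\alpha'}\leq_\alpha p_\alpha$); your version is marginally more direct.
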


\begin{proof}
Suppose $S\subseteq\lambda$ is stationary, $\dot C$ is a $\QQ$-name for a club in $\lambda$ and $p\in\PP$. We will imitate the standard proof that a ${<}\kappa$-closed forcing preserves stationary sets. Let $\langle \leq_\alpha\mid\alpha<\lambda\rangle$ witness that $\QQ$ satisfies $\AxA(\lambda,\lambda)$. 
\begin{claim}
For any $q\in\QQ,\alpha<\lambda$ there is $r\leq_\alpha q$ and some $\alpha<\gamma<\lambda$ with $q\forces \check\gamma\in\dot C$.
\end{claim}
\begin{proof}
Construct a  sequence $\langle q_\alpha\mid\alpha<\omega\rangle$ of conditions in $\QQ$ and an ascending sequence $\langle \gamma_n\mid n<\omega\rangle$ of ordinals with 
\begin{enumerate}[label=$(\roman*)$]
    \item $q_0=q$, $\gamma_0=\alpha$,
    \item $q_{n+1}\leq_{\alpha+n} q_n$ for all $n<\omega$ and
    \item $q_{n+1}\forces``\dot C\cap (\check \gamma_n,\check\gamma_{n+1})\neq\emptyset$
\end{enumerate}
for all $n<\omega$. The construction is immediate using that $\lambda$ is regular uncountable and \ref{AAcond3}. Then by \ref{AAcond2}, there is $q_\ast\leq_\alpha q$ which is below all $q_n$, $n<\omega$. It follows that 
$$q_\ast\forces\check\gamma_\ast\in\dot C$$
where $\gamma_\ast=\sup_{n<\omega}\gamma_n$.
\end{proof}
Suppose toward a contradiction that $p\forces\dot C\cap\check S=\emptyset$. By the claim above, we can build sequences $\langle p_\alpha\mid\alpha<\lambda\rangle$ of conditions in $\QQ$ and an increasing sequence $\langle \gamma_\alpha\mid\alpha<\lambda\rangle$ of ordinals below $\lambda$ so that
\begin{enumerate}[label=$(\roman*)$]
    \item $p_0=p$, 
    \item $p_\beta\leq_\alpha p_\alpha$ for all $\alpha\leq\beta<\lambda$ and
    \item $p_{\alpha+1}\forces\check\gamma_\alpha\in\dot C$ for all $\alpha<\lambda$.
\end{enumerate}
Let $D$ be the set of all limit points ${<}\lambda$ of $\{\gamma_{\alpha}\mid\alpha<\lambda\}$. For any $\alpha<\lambda$, we have 
$$p_{\alpha+1}\forces \check D\cap \gamma_\alpha\subseteq \dot C$$
which shows that $D\cap S=\emptyset$, contradiction.
\end{proof}

\begin{lemm}\label{axamahlocaslemm}
$\PP$ satisfies $\AxA(\kappa,\kappa)$.
\end{lemm}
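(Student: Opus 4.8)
The plan is to read the Axiom~A structure off the matrix $M$. For $\alpha<\kappa$ declare $p\leq_\alpha q$ iff $p\leq_\PP q$ and $p,q$ agree on the coordinates $\Add(I_\delta,1)$ for all $\delta<\alpha$, i.e. they have the same first $\alpha$ rows of $M$ (equivalently, they determine the same approximation to $M_{\geq\alpha}$). Then \ref{AAcond1} is immediate: for $\alpha\leq\beta$, agreement on the first $\beta$ rows entails agreement on the first $\alpha$ rows, so $\leq_\beta\subseteq\leq_\alpha\subseteq\leq_\PP$, and each $\leq_\alpha$ is a partial order since $\leq_\PP$ is.

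For \ref{AAcond2} with $\lambda=\kappa$ it suffices to show that $\PP$ is $\kappa$-cc, for then every antichain $A$ has $|A|<\kappa$ and we may take $q=p$. So suppose $A\subseteq\PP$ is an antichain with $|A|=\kappa$. Since $\kappa$ is inaccessible, the $\Delta$-system lemma applies to $\langle\supp(p)\mid p\in A\rangle$: there are $A'\subseteq A$ with $|A'|=\kappa$ and a root $r\in[\kappa]^{<\kappa}$ with $\supp(p)\cap\supp(q)=r$ for distinct $p,q\in A'$. As $|r|<\kappa$ and $\kappa$ is regular, $r$ is bounded in $\kappa$; each $\Add(\lambda,1)$ with $\lambda\in r$ has size $\lambda<\kappa$, so $\prod_{\lambda\in r}\Add(\lambda,1)$ has size at most $(\sup r)^{|r|}<\kappa$, using that $\kappa$ is a strong limit. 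Hence two distinct $p,q\in A'$ satisfy $p\res r=q\res r$, and since their supports meet exactly in $r$ the common extension with domain $\supp(p)\cup\supp(q)$ witnesses their compatibility, contradicting that $A$ is an antichain.

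The real work is \ref{AAcond3}. Given $\vec p=\langle p_\alpha\mid\alpha<\beta\rangle$ with $p_\gamma\leq_\alpha p_\alpha$ for $\alpha<\gamma<\beta$, by \ref{AAcond1} the $p_\alpha$ are $\leq_\PP$-decreasing, so coordinatewise they form $\subseteq$-increasing chains and the natural candidate fusion is $p_\beta(\lambda)=\bigcup_{\alpha<\beta}p_\alpha(\lambda)$. On a coordinate $I_\delta$ with $\delta<\beta$ this chain is eventually constant — it stabilizes from stage $\delta+1$ on by the row-freezing — so $p_\beta(I_\delta)$ is a genuine condition; on $\Add(\lambda,1)$ with $\lambda>\beta$ the union has size $<\lambda$ by regularity of $\lambda$ and $\beta<\lambda$; and $\supp(p_\beta)$, being a union of $<\kappa$ many sets of size $<\kappa$, has size $<\kappa$. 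One then checks readily that $p_\beta\leq_\alpha p_\alpha$ for all $\alpha<\beta$. The one case not covered is a coordinate $\Add(\lambda,1)$ with $\lambda=\beta$, which arises exactly when $\beta$ is an inaccessible limit of inaccessibles (so $I_\beta=\beta$ and $\Add(\beta,1)$ is literally a coordinate of $\PP$): there a $\leq_\PP$-decreasing sequence of length $\beta$ need not have a lower bound in $\Add(\beta,1)$, and the coordinatewise union can fail to be a condition. I expect this to be the main obstacle. The remedy I would pursue is to enrich $\leq_\alpha$ so that these "diagonal" coordinates are frozen early: assign to each inaccessible $\lambda<\kappa$ an ordinal $\rho(\lambda)<\lambda$ recording how far $\lambda$ sits in the hierarchy of limits-of-inaccessibles below $\kappa$ — a well-founded hierarchy, with all ranks below the relevant cardinals, precisely because $\kappa$ is the \emph{least} Mahlo — and additionally require, for $p\leq_\alpha q$, that $p(\lambda)=q(\lambda)$ whenever $\rho(\lambda)<\alpha$. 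This leaves \ref{AAcond1} and \ref{AAcond2} intact, and now in a fusion of length $\beta$ the problematic coordinate $\Add(\beta,1)$, when present, has $\rho(\beta)<\beta$ and hence is frozen from stage $\rho(\beta)+1<\beta$ on, so the coordinatewise union is again a condition. Verifying that such a $\rho$ exists below the least Mahlo, and that the enriched $\leq_\alpha$ still permits enough refinement to drive the argument of Proposition~\ref{axastatpresprop}, is the technical heart of the proof.
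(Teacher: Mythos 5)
Your reduction of \ref{AAcond2} to ``$\PP$ is $\kappa$-cc'' is the fatal step: $\PP$ is \emph{not} $\kappa$-cc. For $\alpha<\kappa$ let $p_\alpha$ have support $\{I_\xi\mid\xi\leq\alpha\}$ with $p_\alpha(I_\xi)=\{(0,0)\}$ for $\xi<\alpha$ and $p_\alpha(I_\alpha)=\{(0,1)\}$; each support has size $|\alpha|+1<\kappa$, and for $\alpha<\beta$ the conditions disagree at coordinate $I_\alpha$, so $\{p_\alpha\mid\alpha<\kappa\}$ is an antichain of size $\kappa$. Accordingly your $\Delta$-system step is where the argument breaks: extracting a $\Delta$-subsystem of size $\kappa$ from $\kappa$ many sets of size ${<}\kappa$ requires $|\mu|^{<\kappa}<\kappa$ for $\mu<\kappa$ (or a uniform bound on the sizes of the sets), and this fails at every strong limit since $2^{<\kappa}=\kappa$; the supports above form an increasing chain and admit no $\Delta$-subsystem of size $3$. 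The actual proof of \ref{AAcond2} has to exploit the asymmetry of the product: fix $\gamma$, enumerate the $\delta\coloneqq|\PP_{\leq\gamma}|\leq 2^\gamma<\kappa$ many possible ``lower parts'' $q_\alpha\leq p\res\gamma{+}1$, and run a $\leq_\gamma$-decreasing construction of length $\delta$ inside the tail $\PP_{>\gamma}$ (which is ${\leq}\delta$-closed, as its first coordinate is the least inaccessible above $\gamma$), at stage $\alpha$ trying to put $q_\alpha{}^\frown(\text{current tail})$ below a member of $A$. Any $a\in A$ compatible with the resulting $q$ must have been caught at the stage $\alpha$ with $q_\alpha=a\res\gamma{+}1$, and each stage catches at most one $a$, so $q$ meets at most $\delta<\kappa$ members of $A$. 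No chain condition on $\PP$ is needed or available.

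Your discussion of \ref{AAcond3} is the strongest part of the proposal: the coordinate $\Add(\beta,1)$ in a fusion of length $\beta$, when $\beta=I_\beta$, is a genuine subtlety that the paper passes over in silence (it proves only \ref{AAcond2}). However, your proposed repair cannot work as stated: any $\rho$ with $\rho(\lambda)<\lambda$ on the inaccessibles is regressive on a set that is stationary in $\kappa$ (this is exactly Mahloness), so by Fodor it is constant, say $\equiv\gamma_0$, on a stationary set $S$. Then $\leq_{\gamma_0+1}$ freezes all coordinates in $S$, an unbounded set over which the restricted product has $\kappa$ many conditions, and the counting that \ref{AAcond2} requires (at most ${<}\kappa$ many possible frozen parts) is destroyed. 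The standard way out, which the paper itself adopts for the analogous products in its $\omega_1$ and successor sections, is to freeze supports rather than initial segments --- $p\leq^\ast q$ iff $p\leq q$ and $p\res\supp(q)=q\res\supp(q)$ --- so that every coordinate stabilizes as soon as it enters some support and fusions exist trivially; the price is that \ref{AAcond2} then needs a $\diamondsuit$-guessing argument at a reflecting regular $\lambda<\kappa$ instead of a brute-force enumeration. Either way, substantially more work is required than the proposal contains.
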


\begin{proof}
For $\gamma <\kappa$ define $\leq_\gamma$ by $r\leq_\gamma q$ if $r\leq q$ and $r\res\gamma=q\res\gamma$ for $q, r\in\PP$.  We will only show that \ref{AAcond2} holds. So let $p\in\PP$, $\gamma<\kappa$ and $A\subseteq\PP$ a maximal antichain. Let $\langle q_\alpha\vert \alpha<\delta\rangle$ be an enumeration of all conditions in $\PP_{\leq gamma}$ below $p\res\gamma+1$ with $\delta=\vert \PP_{\leq\gamma}\vert$. We construct a $\leq_\gamma$-descending sequence $\langle p_\alpha\vert \alpha\leq\delta\rangle$ of conditions in $\PP$ starting with $p_0=p$ as follows: If $\alpha\leq\delta$ then choose some $\leq_\gamma$-bound of $\langle p_\beta\mid\beta<\alpha\rangle$. This is possible as $\PP_{>\gamma}$ is ${\leq}\delta$-closed, as the next forcing only appears at the next inaccessible. Moreover, if possible and $\alpha<\delta$ make sure that 
$$q_\alpha^\frown p_{\alpha}\res(\gamma, \kappa)$$
is below a condition in $A$. This completes the construction. Set $q\coloneqq q_\kappa$, we will show that $q$ is compatible with at most $\delta$-many elements of $A$. Toward this goal, suppose $a\in A$ and $q$ is compatible with $a$. We may find some $\alpha<\kappa$ so that $a\res\gamma+1=q_\alpha$. It follows that we must have succeeded in the construction of $p_\alpha$ with the additional demand that 
$$q_\alpha^{\frown}p_\alpha\res(\gamma,\kappa)$$
is below a condition in $A$, but this can only be true for $a$. We have shown that for any $a\in A$ compatible with $q$ there is $\alpha<\delta$ with $q_\alpha^{\frown}q\res(\gamma,\kappa)\leq a$ and note that no single $\alpha$ can witness this for more than one element \mbox{of $A$}.
\end{proof}

\begin{cor}
$\kappa$ is Mahlo in $L[G]$.
\end{cor}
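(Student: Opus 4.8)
The plan is to deduce that $\kappa$ is Mahlo in $L[G]$ by combining the three preceding results. We know from Lemma \ref{sameinaccessibleslemm} that $L$ and $L[G]$ have the same inaccessibles, so in particular $\kappa$ is inaccessible in $L[G]$ (it was Mahlo, hence inaccessible, in $L$). It thus remains to show that the stationary set of inaccessibles below $\kappa$ in $L$ remains stationary in $L[G]$. Since $\kappa$ is regular and uncountable in both models and $\PP$ satisfies $\AxA(\kappa,\kappa)$ by Lemma \ref{axamahlocaslemm}, Proposition \ref{axastatpresprop} applies and tells us that $\PP$ preserves stationary subsets of $\kappa$.

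So I would argue as follows. Work in $L$, where $\kappa$ is Mahlo, and let $S = I \cap \kappa$ be the set of inaccessible cardinals below $\kappa$; this is stationary in $\kappa$. By Lemma \ref{axamahlocaslemm}, $\PP$ satisfies $\AxA(\kappa,\kappa)$, and $\kappa$ is regular uncountable, so Proposition \ref{axastatpresprop} gives that $S$ remains stationary in $L[G]$. By Lemma \ref{sameinaccessibleslemm}, every element of $S$ is still inaccessible in $L[G]$, so $S^{L[G]} = I^{L[G]} \cap \kappa \supseteq S$ is stationary in $L[G]$. Finally, Lemma \ref{sameinaccessibleslemm} also gives that $\kappa$ itself is still inaccessible (in particular regular) in $L[G]$. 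Hence in $L[G]$ the set of inaccessibles below $\kappa$ is stationary in the regular cardinal $\kappa$, which is exactly the statement that $\kappa$ is Mahlo in $L[G]$.

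I do not expect any genuine obstacle here: the corollary is essentially a bookkeeping assembly of Lemma \ref{sameinaccessibleslemm} (inaccessibles are preserved, including $\kappa$), Lemma \ref{axamahlocaslemm} ($\PP$ has $\AxA(\kappa,\kappa)$), and Proposition \ref{axastatpresprop} (such forcings preserve stationary subsets of $\kappa$). The only point worth a sentence of care is making sure the stationarity is being preserved \emph{in $\kappa$} rather than in some larger cardinal — but Proposition \ref{axastatpresprop} is stated precisely for stationary subsets of $\lambda$ when the forcing has $\AxA(\lambda,\lambda)$, with $\lambda = \kappa$ here, so this is immediate.
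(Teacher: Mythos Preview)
Your proof is correct and follows exactly the paper's approach: the corollary is deduced immediately from Lemma~\ref{sameinaccessibleslemm}, Lemma~\ref{axamahlocaslemm}, and Proposition~\ref{axastatpresprop}, with your write-up simply spelling out the details of how these combine.
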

\begin{proof}
This follows immediately from Lemma \ref{sameinaccessibleslemm}, Lemma \ref{axamahlocaslemm} and Proposition \ref{axastatpresprop}.
\end{proof}

Next, we aim to find an easier description of $\kapman^{L[G]}$. Recall the $\lambda$-approximation property introduced by Hamkins \cite{hamext}:

\begin{defn}
Let $W\subseteq V$ be an inner model, $\lambda$ an infinite cardinal.
\begin{enumerate}[label=\rn*]
\item For $x\in V$, a $\lambda$-approximation\index{lambda approximation@$\lambda$-approximation} of $x$ by $W$ is of the form $x\cap y$ where $y\in W$ is of size ${\leq}\lambda$.
\item $W\subseteq V$ satisfies the $\lambda$-approximation property\index{lambda approximation property@$\lambda$-approximation property} if whenever $x\in V$ and all $\lambda$-approximations of $x$ by $W$ are in $W$, then $x\in W$.
\end{enumerate}
\end{defn}

 All $\kappa$-grounds satisfy the $\kappa$-approximation property (cf. \cite{fhrstg}). 

\begin{lemm}\label{nicedescrip} 
$\kapman^{L[G]}=\bigcap_{\lambda\in I\cap\kappa} L[G_{>\lambda}]$.
\end{lemm}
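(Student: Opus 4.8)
The plan is to prove the two inclusions separately. For the inclusion $\kapman^{L[G]}\subseteq\bigcap_{\lambda\in I\cap\kappa} L[G_{>\lambda}]$, the key point is that each $L[G_{>\lambda}]$ should be a $\kappa$-ground of $L[G]$: indeed, $L[G]=L[G_{>\lambda}][G_{\leq\lambda}]$ and $\PP_{\leq\lambda}$ is a product of $\Add(\gamma,1)$ over the finitely-or-boundedly-many inaccessibles $\gamma\leq\lambda$, so it has size $<\kappa$ (in fact $\leq\lambda^+<\kappa$ computed in $L[G_{>\lambda}]$ by $\GCH$, or just $\leq\lambda$ if one is careful). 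Hence $L[G_{>\lambda}]$ is a ground of $L[G]$ via a forcing of size $<\kappa$, i.e.\ a $\kappa$-ground, so $\kapman^{L[G]}$, being the intersection of \emph{all} $\kappa$-grounds, is contained in each $L[G_{>\lambda}]$ and therefore in their intersection.

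For the reverse inclusion $\bigcap_{\lambda\in I\cap\kappa} L[G_{>\lambda}]\subseteq\kapman^{L[G]}$, I would take an arbitrary $\kappa$-ground $W$ of $L[G]$ and show $\bigcap_{\lambda} L[G_{>\lambda}]\subseteq W$; it then follows that the intersection on the left is below the intersection of all $\kappa$-grounds, which is $\kapman^{L[G]}$. So fix such a $W$, say $L[G]$ arises from $W$ by forcing with some $\QQ\in W$ of size $<\kappa$. Pick $\lambda\in I\cap\kappa$ with $|\QQ|\leq\lambda$ (possible since $\kappa$ is inaccessible, so $I\cap\kappa$ is unbounded in $\kappa$; alternatively $I\cap\kappa$ is cofinal because $\kappa$ is Mahlo hence inaccessible). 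Then $L[G]$ is obtained from both $W$ and $L[G_{>\lambda}]$ by forcing: over $W$ by $\QQ$ of size $\leq\lambda$, and over $L[G_{>\lambda}]$ by $\PP_{\leq\lambda}$, which is $\leq\lambda$-closed in $L[G_{>\lambda}]$ (since $\PP_{\leq\lambda}$ is a product of $\Add(\gamma,1)$ for $\gamma\leq\lambda$ inaccessible, and $\Add(\lambda,1)$ is $<\lambda$-closed — here I should double check the closure degree; it is $<\lambda$-closed, and that suffices with $\lambda$ regular). The standard approximation-and-covering machinery of Hamkins/Laver (cf.\ \cite{hamext}, \cite{fhrstg}) then gives that $L[G_{>\lambda}]$, as an intermediate model of a $\leq\lambda$-closed forcing extension, satisfies the $\lambda^+$-approximation property in $L[G]$; and $W$, as a ground via a forcing of size $\leq\lambda$, likewise has the $\lambda^+$-approximation property in $L[G]$. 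The uniqueness theorem for models with the approximation and cover properties then forces $L[G_{>\lambda}]$ and $W$ to be compared: more directly, any $x\in\bigcap_\mu L[G_{>\mu}]$ in particular lies in $L[G_{>\lambda}]$, and I want to conclude $x\in W$; for this I restrict attention to $x\subseteq\Ord$ (every set is coded by such, and all models in sight have the same ordinals), and use that $W$ has the $\lambda^+$-approximation and cover properties in $L[G]$ together with the fact that all $\lambda^+$-sized approximations of $x$ lie in $L[G_{>\lambda}]$ — and $L[G_{>\lambda}]\subseteq W$? No: that is what is in question.

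So the cleaner route is via the uniqueness of grounds with a fixed approximation/cover property: the lemma of Hamkins and Laver states that if $W_0,W_1$ are inner models, both with the $\delta$-approximation and $\delta$-cover properties, $W_0\cap V_\delta = W_1\cap V_\delta$ (or $W_0,W_1$ compute $H_{\delta^+}$-related data compatibly), and $(\delta^+)^{W_0}=(\delta^+)^{W_1}=\delta^+$, then $W_0=W_1$. Here $W$ and $L[G_{>\lambda}]$ need not be equal, so instead I iterate: given $x\in\bigcap_\mu L[G_{>\mu}]$, I show $x\in W$ by an approximation argument inside $W$ — every $\lambda^+$-approximation $x\cap y$ with $y\in W$, $|y|^W\leq\lambda^+$, must be computed correctly, and since such $y$ has size $\leq\lambda^+<\kappa$ it lies in some $L[G_{>\mu}]$ for $\mu$ large (as $\PP_{>\mu}$ is highly closed, small subsets of the ground reflect down), whence $x\cap y\in L[G_{>\mu}]\subseteq$ — again the same gap. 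I expect the \textbf{main obstacle} to be exactly this: cleanly organizing the approximation argument so as not to circularly assume $L[G_{>\mu}]\subseteq W$. The correct fix, which I would carry out, is to prove that $W$ and $L[G_{>\lambda}]$ have a common $\kappa$-ground ``matrix-style'' using the product structure: $W$, having size-$\leq\lambda$ forcing up to $L[G]$, must by Usuba's or Hamkins's analysis (the forcing $\PP_{\leq\lambda}$ being a complete subforcing picture) actually contain all of $L[G_{>\lambda}]$ — because $G_{>\lambda}$ is added by the $\leq\lambda$-closed part and any such generic is, up to the $\lambda^+$-approximation property, already in $W$: concretely, for each inaccessible $\gamma>\lambda$, the generic $g_\gamma\in{}^\gamma 2$ has all its $<\gamma$-initial segments (hence all $\lambda^+$-approximations) lying in $W$ since $W$ agrees with $L[G]$ on $V_{\lambda^+}$ and the initial segments live there once we take $\mu=\gamma$ shifts into account — so by $\lambda^+$-approximation in $W$, $g_\gamma\in W$, giving $L[G_{>\lambda}]\subseteq W$, and a fortiori $\bigcap_\mu L[G_{>\mu}]\subseteq W$. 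This last step is where I would spend the most care, making precise that each $g_\gamma$ is recoverable from its small approximations and that these small approximations land in $W$; granting it, both inclusions are complete and the lemma follows.
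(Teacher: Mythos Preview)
Your overall architecture is right and matches the paper: the easy inclusion is that each $L[G_{>\lambda}]$ is a $\kappa$-ground, and for the other inclusion one fixes a $\kappa$-ground $W$, picks an inaccessible $\lambda<\kappa$ so that $W$ is a $\lambda$-ground (hence $W\subseteq L[G]$ has the $\lambda$-approximation property), and proves the stronger fact $L[G_{>\lambda}]\subseteq W$. This is exactly the route the paper takes.

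There is, however, a genuine gap in your execution. You propose to show $g_\gamma\in W$ for each inaccessible $\gamma>\lambda$ individually via approximation. Even granting this, you do not get $L[G_{>\lambda}]\subseteq W$: you need the \emph{sequence} $\langle g_\gamma\mid \gamma\in I\cap(\lambda,\kappa)\rangle$ in $W$, and having each coordinate in $W$ does not yield this. The paper avoids this by applying the approximation argument directly to the whole object $G_{>\lambda}$ (equivalently, to the matrix $M_{\geq\alpha}$ where $\lambda=I_\alpha$): one shows that every $\lambda$-approximation of $M_{\geq\alpha}$ by $W$ lies in $W$, and then invokes the $\lambda$-approximation property once to conclude $M_{\geq\alpha}\in W$.

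Your justification for why the small pieces land in $W$ is also off. It is not that ``$W$ agrees with $L[G]$ on $V_{\lambda^+}$'' (this is false in general for a $\lambda$-ground). The correct reason is structural: if $a\in W$ has size ${<}\lambda$ and indexes entries of $M_{\geq\alpha}$, then for each row $\gamma\geq\alpha$ the $\gamma$-section of $a$ is bounded in $I_\gamma$ (since $I_\gamma\geq\lambda$ is regular). Hence there is a single condition $p\in\PP$ whose matrix already decides all entries indexed by $a$; as $p\in\PP\subseteq L\subseteq W$ and $a\in W$, one computes $M_{\geq\alpha}\res a=(p\res a)\in W$. The paper streamlines this by first using Jensen covering (no $0^\sharp$ in $W$) to enlarge $a$ to some $b\in L$ of size ${<}\lambda$, so that the relevant dense set lives in $L$; as the paper itself remarks, covering is a convenience here, not essential.
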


\begin{proof}

Suppose $W$ is a $\kappa$-ground of $L[G]$. It is enough to find $\lambda\in I\cap\kappa$ such that $L[G_{>\lambda}]\subseteq W$. Clearly, $\PP\in L\subseteq W$.  As $\kappa$ is a limit of inaccessibles, we may take some $\lambda<\kappa$ inaccessible so that $W$ is a $\lambda$-ground. Thus $W\subseteq L[G]$ satisfies the $\lambda$-approximation property. We will show $G_{>\lambda}\in W$ (even $G_{\geq\lambda}\in W$). Find $\alpha$ with $\lambda=I_\alpha$, it is thus enough to show $M_{\geq\alpha}\in W$. To any $\lambda$-approximation $M_{\geq\alpha}\cap a$ of $M_{\geq\alpha}$ by $W$ corresponds some $a^\prime\subseteq\kappa\setminus\alpha\times\kappa$, $a^\prime\in W$ of size ${<}\lambda$ so that 
$$M_{\geq\alpha}\cap a=M_{\geq\alpha}\res a^\prime \coloneqq \langle e_{\gamma, \beta}\mid (\gamma,\beta)\in a^\prime\rangle.$$
We will show that all such restrictions of $M_{\geq\alpha}$ are in $W$. So let $a\in W$, $a\subseteq \kappa\setminus \alpha\times\kappa$, $\vert a\vert<\lambda$. As $\zerosharp$ does not exist in $W$, there is $b\in L$, $b\subseteq\kappa\setminus\alpha\times\kappa$ of size $<\lambda$ with $a\subseteq b$. For all $\alpha\leq\gamma<\kappa$, the set of $\beta< I_\gamma$ with $(\gamma,\beta)\in b$ is bounded in $I_\gamma$. As described earlier, we may think of conditions in $\PP$ as partial $\kappa\times\kappa$ matrices. With this in mind, the conditions $p\in\PP$ that contain information on the entry $e_{\gamma, \beta}$ for all $(\gamma, \beta)\in b$ form a dense set of $\PP$. Thus $M\res b=\langle e_{\gamma, \beta}\mid(\gamma, \beta)\in b\rangle$ is essentially a condition $p\in\PP\subseteq W$ and hence $M\res a= (M\res b)\res a\in W$. As $W\subseteq L[G]$ satisfies the $\lambda$-approximation property, we have $M_{\geq\alpha}\in W$.

\end{proof}

\begin{rem}
The above argument shows that for any $\lambda\in I\cap\kappa$
$$\kapman^{L[G_{>\lambda}]}=\kapman^{L[G]}.$$ 
In fact, whenever $\delta$ is a strong limit, the $\delta$-mantle is always absolute to any $\delta$-ground.
The use of Jensen's covering lemma in the above argument is not essential, in fact a model in which the $\kappa$-mantle does not satisfy choice for $\kappa$ Mahlo can be analogously constructed in the presence of $0^\sharp$. However, the absence of $0^\sharp$ simplifies the proof.
\end{rem}

We will later show that $\mathcal{P}(\kappa)^{\kapman^{L[G]}}$ does not admit a wellorder in $\kapman^{L[G]}$. First, we analyze which subsets of $\kappa$ $\kapman^{L[G]}$ knows of. We call $a\subseteq\kappa$ \textit{fresh} if $a\cap \lambda\in L$ for all $\lambda<\kappa$.

\begin{prop}
The subsets of $\kappa$ in $\kapman^{L[G]}$ are exactly the fresh subsets of $\kappa$ in $L[G]$.
\end{prop}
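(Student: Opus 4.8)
The plan is to prove both inclusions. For the easy direction, I first show every fresh subset of $\kappa$ lies in $\kapman^{L[G]}$. By Lemma \ref{nicedescrip} it suffices to check that if $a\subseteq\kappa$ is fresh then $a\in L[G_{>\lambda}]$ for every $\lambda\in I\cap\kappa$. Fix such a $\lambda$ and work in $L[G_{>\lambda}]$. The remaining forcing to reach $L[G]$ is $\PP_{\leq\lambda}$, which has size $\leq\lambda^+$ there, in particular size $<\kappa$ (and is $\leq\lambda$-cc-ish of small size). Since $a\cap\beta\in L\subseteq L[G_{>\lambda}]$ for all $\beta<\kappa$, and $\kappa$ is regular (indeed Mahlo) in $L[G]$, a standard argument shows a fresh set cannot be added by a forcing of size $<\kappa$: any name for $a$ would be decided on a dense set, and by a counting/$\Delta$-system style argument the value $a\cap\beta$ for large enough $\beta$ determines the generic object, forcing $a$ into $L[G_{>\lambda}]$. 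More simply: $\PP_{\leq\lambda}$ has a dense subset of size $\lambda$, so $a$ is coded by a $\lambda$-sized amount of information together with the sequence $\langle a\cap\beta\mid\beta<\kappa\rangle\in L[G_{>\lambda}]$; assembling these in $L[G_{>\lambda}]$ gives $a\in L[G_{>\lambda}]$.

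For the harder direction, I must show every $a\subseteq\kappa$ with $a\in\kapman^{L[G]}$ is fresh, i.e. $a\cap\beta\in L$ for all $\beta<\kappa$. Fix $\beta<\kappa$ and pick an inaccessible $\lambda\in I\cap\kappa$ with $\beta<\lambda$. By Lemma \ref{nicedescrip}, $a\in L[G_{>\lambda}]$. Now $L[G_{>\lambda}]$ itself arises over $L$ by the $\leq\lambda$-closed forcing $\PP_{>\lambda}$ — more precisely by $\prod_{\gamma\in I\cap\kappa,\ \gamma>\lambda}\Add(\gamma,1)$ — and this forcing, being $\leq\lambda$-closed, adds no new subsets of $\beta$. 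Hence $a\cap\beta\in L$.

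The one subtlety to get right is the first direction: verifying that a fresh set genuinely survives into each $L[G_{>\lambda}]$. The point, which I expect to be the main obstacle to state cleanly, is that although $\PP_{\leq\lambda}$ has size $<\kappa$ it is not $\leq\beta$-closed for small $\beta$, so I cannot simply cite a closure argument to place $a\cap\beta$ below. Instead I use that $\PP_{\leq\lambda}\in L_\kappa\subseteq L[G_{>\lambda}]$ has a dense subset $D$ of size $\lambda$ in $L[G_{>\lambda}]$; the filter $G_{\leq\lambda}\cap D$ is a subset of a set of size $\lambda$ but need not itself lie in $L[G_{>\lambda}]$. The correct observation is rather that freshness is preserved \emph{downward}: one shows directly that if $a$ were added by $\PP_{\leq\lambda}$ over $L[G_{>\lambda}]$ with all initial segments in $L[G_{>\lambda}]$, then since $\kappa$ remains regular and $|\PP_{\leq\lambda}|<\kappa$, a name-counting argument (exactly as in the proof of Lemma \ref{nicedescrip}, using that fresh sets of a regular cardinal are not added by small forcing) forces $a\in L[G_{>\lambda}]$; combined with the computation $a\cap\beta\in L$ above, $a$ is in fact fresh \emph{in} $L[G_{>\lambda}]$, and symmetrically in $L[G]$. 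I would phrase this as: a set in $\kapman^{L[G]}$ is fresh (by the $\leq\lambda$-closure argument), and conversely a fresh $a\in L[G]$ lies in every $L[G_{>\lambda}]$ because it is not added over $L[G_{>\lambda}]$ by the size-$<\kappa$ forcing $\PP_{\leq\lambda}$, since $\kappa$ is regular there and $a$'s initial segments already lie in $L\subseteq L[G_{>\lambda}]$.
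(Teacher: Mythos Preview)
Your argument for ``$a\in\kapman^{L[G]}\Rightarrow a$ is fresh'' is essentially the paper's: from $a\in\kapman^{L[G]}$ one gets $a\in L[G_{>\lambda}]$ for every $\lambda\in I\cap\kappa$, and the ${\leq}\lambda$-closure of $\PP_{>\lambda}$ over $L$ yields $a\cap\lambda\in L$. (You need not invoke Lemma~\ref{nicedescrip} here; each $L[G_{>\lambda}]$ is itself a $\kappa$-ground, so $a\in L[G_{>\lambda}]$ is immediate.)

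For ``fresh $\Rightarrow a\in\kapman^{L[G]}$'' your route is correct but more roundabout than the paper's. You reduce via Lemma~\ref{nicedescrip} to showing $a\in L[G_{>\lambda}]$, and then argue that a forcing of size ${<}\kappa$ over $L[G_{>\lambda}]$ cannot add a fresh subset of the regular cardinal $\kappa$. The argument you are circling around does work: in $L[G]$ pick for each $\alpha<\kappa$ some $p_\alpha\in G_{\leq\lambda}$ deciding $\dot a\cap\alpha$; since $\vert\PP_{\leq\lambda}\vert<\kappa$ and $\kappa$ is regular, a single $p$ occurs as $p_\alpha$ for cofinally many $\alpha$, hence decides $\dot a$. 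The paper instead avoids Lemma~\ref{nicedescrip} altogether and argues directly for an \emph{arbitrary} $\kappa$-ground $W$: such a $W$ satisfies the $\lambda$-approximation property for some $\lambda<\kappa$, and since $a$ is fresh all its $\lambda$-approximations lie in $L\subseteq W$, whence $a\in W$. This is shorter and does not rely on the particular description of the $\kappa$-grounds established earlier. Note also that your labels ``easy'' and ``harder'' are inverted relative to how the two directions actually unfold in your own write-up.
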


\begin{proof}
First suppose $a\subseteq\kappa$, $a\in\kapman^{L[G]}$. If $\lambda<\kappa$ then $a\in L[G_{>\lambda}]$. As $\PP_{>\lambda}$ is ${\leq}\lambda$-closed in $L$, $a\cap\lambda\in L$.\\
For the other direction assume $a\in L[G]$ is a fresh subset of $\kappa$ and assume $W$ is a $\kappa$-ground of $L[G]$. There is $\lambda<\kappa$ so that $W\subseteq L[G]$ satisfies the $\lambda$-approximation property. As $a$ is fresh, all the $\lambda$-approximations of $a$ in $W$ are in $W$. Thus $a\in W$.
\end{proof}

The columns $c_\beta$, $\beta<\kappa$, of $M$ are the fresh subsets of $\kappa$ relevant to our argument.

\begin{prop}
All $c_\beta$, $\beta<\kappa$, are $\Add(\kappa, 1)$-generic over $L$.
\end{prop}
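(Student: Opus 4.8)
The plan is to show that each column $c_\beta$ is $\Add(\kappa,1)^L$-generic over $L$ by a density argument. Recall $c_\beta = \langle e_{\alpha,\beta} \mid \alpha < \kappa\rangle$, where $e_{\alpha,\beta} = E_\alpha(\beta)$ when $\beta < I_\alpha$ and $e_{\alpha,\beta} = 0$ otherwise. Since $I_\alpha \to \kappa$ is continuous and increasing with $I_0 > 0$, for a fixed $\beta$ there is a least $\alpha_\beta$ with $\beta < I_{\alpha_\beta}$, and for all $\alpha \geq \alpha_\beta$ we have $e_{\alpha,\beta} = E_\alpha(\beta)$; only for the $<\kappa$-many $\alpha < \alpha_\beta$ do we get the padding value $0$. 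So modulo an initial segment of length $\alpha_\beta < \kappa$, the column $c_\beta$ records the values $E_\alpha(\beta) = g_{I_\alpha}(\beta)$ as $\alpha$ ranges over $\kappa$.

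First I would fix $\beta < \kappa$ and a dense open $D \in L$ in $\Add(\kappa,1)^L$, together with a condition $s \in c_\beta$ (so $s \in \ltwomega$ with... actually $s \in {}^{<\kappa}2$, $s \sqsubseteq c_\beta$); I want to find $t \in D$ with $s \sqsubseteq t \sqsubseteq c_\beta$. The key point is that the relevant bits of $c_\beta$ above height $\alpha_\beta$ come from the \emph{independent} generics $g_{I_\alpha}$ for $\Add(I_\alpha,1)^L$, $\alpha < \kappa$. I would argue by a genericity/mutual-genericity argument in the product: consider the subproduct $\PP^\ast = \prod_{\alpha} \Add(I_\alpha,1)^L$ (with the appropriate $<\kappa$-support), whose generic is $\langle g_{I_\alpha} \mid \alpha < \kappa\rangle$, and note that the map sending this generic to the column $c_\beta$ is (up to the fixed initial segment of $0$'s) a projection reading off the $\beta$-th coordinate of each factor. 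The set of conditions in $\PP^\ast$ that force the corresponding approximation to $c_\beta$ into $D$ is dense below any condition: given $p \in \PP^\ast$, its support is a set $S$ of size $<\kappa$ and for each $\alpha \in S$ it specifies $p(\alpha)$ on a bounded subset of $I_\alpha$; I can extend $p$ on coordinates outside $\dom p$ and at heights above what $p$ decides, freely choosing the bits at coordinate $\beta$, to realize any extension $t \sqsupseteq s$ of the column; since $D$ is dense in $\Add(\kappa,1)^L$ I can pick $t \in D$ and build a condition $p' \leq p$ forcing $c_\beta \sqsupseteq t$. By genericity of $G$ over $L$ this density set is met, so $c_\beta$ meets $D$.

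The cleanest way to phrase this is probably to isolate the complete subforcing whose generic is exactly $\langle g_{I_\alpha}(\beta) \mid \alpha < \kappa\rangle$ and observe it is (forcing-)isomorphic to, or at least projects onto, $\Add(\kappa,1)^L$: the bits $g_{I_\alpha}(\beta)$ for $\alpha < \kappa$ are mutually $\Add(2,1)$-generic (i.e. the single-bit coordinates are independent Cohen bits), so their concatenation is $\Add(\kappa,1)$-generic, modulo the issue that the support of $\PP$ is $<\kappa$ rather than full. Here I would use that a $<\kappa$-support product of $\Add(I_\alpha,1)^L$ still contains, densely, conditions specifying any prescribed behavior of the $\beta$-th bits on any bounded-length initial segment, because each single Cohen-real-adding factor is activated once we reach its coordinate and its $\beta$-th bit is only one bit of information; the $<\kappa$-support restriction never prevents us from deciding an initial segment of length $<\kappa$ of $c_\beta$. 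Finally the padding $0$'s below $\alpha_\beta$ form a fixed finite-or-$<\kappa$ stem that does not affect genericity (a Cohen generic over $L$ remains generic after being shifted by a ground-model stem).

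The main obstacle I expect is the bookkeeping around the $<\kappa$-support of the product together with the $0$-padding: I must be careful that when I extend a condition $p \in \PP$ to force a long initial segment of $c_\beta$ into $D$, the extension I build still has $<\kappa$-support and still lies in $\PP$ (has the trivial $0$'s in the upper right, has nontrivial information in $<\kappa$ rows, etc.), and that the portion of $c_\beta$ I can control by genericity really is all of $c_\beta$ above its fixed stem and not something spoiled by the padding convention. Once that is set up, the genericity of $c_\beta$ over $L$ is a routine density argument using the independence of the coordinates.
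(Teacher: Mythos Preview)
Your proposal is correct and converges on the same idea as the paper: the map $\pi\colon\PP\to\Add(\kappa,1)$ sending a condition to the information it carries on the $\beta$-th column is a projection, well-defined precisely because $\PP$ has ${<}\kappa$-support (so any condition decides only ${<}\kappa$ many entries of $c_\beta$). The paper states this in two sentences; your density argument is just the unpacking of ``$\pi$ is a projection,'' and your remark that the fixed stem of padding $0$'s does not spoil genericity (the cone in $\Add(\kappa,1)$ below a ground-model condition is isomorphic to $\Add(\kappa,1)$) is exactly the small point one needs to make the projection land in the right forcing.
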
 

\begin{proof}
The map $\pi\colon \PP\rightarrow \Add(\kappa,1)$ that maps $p\in\PP$ to the information that $p$ has on $c_\beta$ is well-defined as $\PP$ is a bounded support iteration of length $\kappa$. Clearly, $\pi$ is a projection.
\end{proof}

This is exactly the reason we chose bounded support in the definition \mbox{of $\PP$}.

We are now in good shape to complete the argument.

\begin{thm}
$({<}\kappa+1, H_{\kappa^+})$-choice fails in $\kapman^{L[G]}$.
\end{thm}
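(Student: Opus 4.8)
The plan is to exhibit a concrete $\kappa$-sequence $\vec x=\langle x_\beta\mid\beta<\kappa\rangle\in\kapman^{L[G]}$ of nonempty elements of $H_{\kappa^+}^{\kapman^{L[G]}}$ admitting no choice sequence inside $\kapman^{L[G]}$. For a fresh $a\subseteq\kappa$ write $[a]$ for the set of all fresh $b\subseteq\kappa$ with $a\triangle b$ bounded in $\kappa$ (equivalently, $[a]=\{a\triangle u\mid u\in L,\ u\text{ bounded}\}$), and put $x_\beta:=[c_\beta]$ and $\vec x:=\langle [c_\beta]\mid\beta<\kappa\rangle$. Each $x_\beta$ contains $c_\beta$, hence is nonempty; it has size $\kappa$ with all members of size ${\le}\kappa$, so $x_\beta\in H_{\kappa^+}$; and it is definable in $\kapman^{L[G]}$ from the parameter $c_\beta$, which lies in $\kapman^{L[G]}$ since $c_\beta$ is fresh. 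The point is that $\vec x$, unlike the sequence $\langle c_\beta\mid\beta<\kappa\rangle$ itself, does belong to $\kapman^{L[G]}$: fix an inaccessible $\lambda=I_\rho<\kappa$; in the $\kappa$-ground $L[G_{>\lambda}]$ the matrix $M_{\ge\rho}$ is available, its $\beta$-th column is exactly the tail $\langle e_{\alpha,\beta}\mid\alpha\ge\rho\rangle$ of $c_\beta$, and $[c_\beta]$ is completely determined by that tail (two fresh sets are $\triangle$-bounded iff they are eventually equal, and ``eventually'' may be taken past $\rho$). Hence $\vec x\in L[G_{>\lambda}]$ for every inaccessible $\lambda<\kappa$, and so $\vec x\in\kapman^{L[G]}$ by Lemma~\ref{nicedescrip}.

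It remains to show there is no choice sequence for $\vec x$ in $\kapman^{L[G]}$. Suppose toward a contradiction that $\vec y=\langle y_\beta\mid\beta<\kappa\rangle\in\kapman^{L[G]}$ with $y_\beta\in x_\beta$, and let $\delta_\beta<\kappa$ bound $y_\beta\triangle c_\beta$. For each $\alpha<\kappa$ the ``row'' $h_\alpha:=\langle y_\beta(\alpha)\mid\beta<I_\alpha\rangle$ is definable from $\vec y$, so $h_\alpha\in\kapman^{L[G]}$ (using $\kapman^{L[G]}\models\ZF$, Fact~\ref{stronglimitzffact}); since $h_\alpha\subseteq I_\alpha$ and $L[G_{>I_\alpha}]$ is a $\kappa$-ground obtained over $L$ by the $<\!I_{\alpha+1}$-closed forcing $\PP_{>I_\alpha}$, which adds no subsets of $I_\alpha$, we conclude $h_\alpha\in L$. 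But $h_\alpha=g_{I_\alpha}\triangle D_\alpha$ where $D_\alpha=\{\beta<I_\alpha\mid\alpha\in y_\beta\triangle c_\beta\}\subseteq\{\beta\mid\delta_\beta>\alpha\}$; if $\langle\delta_\beta\mid\beta<\kappa\rangle$ were bounded by some $\delta^*<\kappa$ then $D_\alpha=\varnothing$ and $h_\alpha=g_{I_\alpha}\notin L$ for every $\alpha\ge\delta^*$, a contradiction, so $\langle\delta_\beta\mid\beta<\kappa\rangle$ is unbounded in $\kappa$. To finish, for $\gamma<\kappa$ set $\delta_\gamma^*:=\sup_{\beta<\gamma}\delta_\beta<\kappa$ (using regularity of $\kappa$); then for $\alpha\ge\max(\delta_\gamma^*,\rho_\gamma)$ one has $\langle y_\beta(\alpha)\mid\beta<\gamma\rangle=\langle c_\beta(\alpha)\mid\beta<\gamma\rangle=g_{I_\alpha}\!\upharpoonright\!\gamma$, so the object $W_\gamma:=\langle h_\alpha\!\upharpoonright\!\gamma\mid \max(\delta_\gamma^*,\rho_\gamma)\le\alpha<\kappa\rangle\in\kapman^{L[G]}$ equals $\langle g_{I_\alpha}\!\upharpoonright\!\gamma\mid \alpha\ge\max(\delta_\gamma^*,\rho_\gamma)\rangle$; transposing, $\langle c_\beta\cap[\max(\delta_\gamma^*,\rho_\gamma),\kappa)\mid\beta<\gamma\rangle\in\kapman^{L[G]}$. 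Combining this (for $\gamma$ large, exploiting the unboundedness of $\langle\delta_\beta\rangle$ and the freshness of $\vec y$) with the mutual genericity of $G_{\le\mu}$ and $G_{>\mu}$ one produces an element of $\kapman^{L[G]}$ which cannot lie in $L[G_{>\mu}]$ for a suitable inaccessible $\mu<\kappa$ -- contradicting Lemma~\ref{nicedescrip}. Since the instance $\beta=\kappa<\kappa+1$ of $({<}\kappa+1,H_{\kappa^+})$-choice fails, we are done.

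I expect the last step -- turning ``$\langle\delta_\beta\mid\beta<\kappa\rangle$ is unbounded'' into an outright contradiction -- to be the main obstacle. The subtlety is that the grounds $L[G_{>\lambda}]$ do see the tails of all columns, so a crude ``$\vec y$ reconstructs the generic rows $g_{I_\alpha}$'' argument is not available; one must leverage precisely the interplay between the ever-larger corrections $y_\beta\triangle c_\beta$ and the shrinking stock of row-information in $L[G_{>\lambda}]$ as $\lambda\to\kappa$, presumably via a pressing-down argument on $\beta\mapsto\delta_\beta$ together with the $\kappa$-approximation property. A cleaner alternative may be a symmetry argument using the $L$-automorphisms of $\PP$ that flip bits within the coordinates $\Add(I_\alpha,1)$: since $\vec x$ (unlike $\langle c_\beta\mid\beta<\kappa\rangle$) has a $\PP$-name invariant under every automorphism that alters each column only boundedly, while any putative choice sequence $\vec y$ must pick out a genuine representative of $[c_\beta]$ and is thereby moved, one could hope to contradict $\vec y\in\kapman^{L[G]}$ directly; making such an invariance argument interact correctly with the definition of $\kapman^{L[G]}$ as an intersection of grounds is itself the delicate part.
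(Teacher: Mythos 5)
Your setup is exactly the paper's: the fuzzy sequence $\langle [c_\beta]_\sim\mid\beta<\kappa\rangle$, its membership in $\kapman^{L[G]}$ via the tails visible in each $L[G_{>\lambda}]$, the ordinals $\delta_\beta$ measuring where a putative choice sequence $\vec y$ starts agreeing with $c_\beta$, and the observation that the recovered rows $h_\alpha=\langle y_\beta(\alpha)\mid\beta<I_\alpha\rangle$ must lie in $L$ while $g_{I_\alpha}\notin L$. But there is a genuine gap at precisely the point you flag: you only rule out the case that $\beta\mapsto\delta_\beta$ is \emph{globally} bounded, and then the argument dissolves into a sketch (``one produces an element \dots contradicting Lemma \ref{nicedescrip}'') that is not carried out; the auxiliary objects $W_\gamma$ and the undefined $\rho_\gamma$ do not obviously lead anywhere, and neither the pressing-down idea nor the automorphism alternative is developed.

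The missing idea is where the Mahloness of $\kappa$ \emph{in $L[G]$} (established earlier via the $\AxA(\kappa,\kappa)$ argument) actually enters. You do not need $\langle\delta_\beta\mid\beta<\kappa\rangle$ to be bounded, only to have an inaccessible closure point which is moreover a fixed point of the enumeration of inaccessibles: the set $C$ of $\lambda<\kappa$ closed under $\beta\mapsto\delta_\beta$ is club in $\kappa$ (computed in $L[G]$, where $\vec\delta$ lives), and since $\kappa$ is still Mahlo in $L[G]$ and $L$, $L[G]$ have the same inaccessibles, $C$ meets the stationary set of inaccessible $\alpha$ with $I_\alpha=\alpha$. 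For such an $\alpha$ one has $\delta_\beta<\alpha$ for every $\beta<I_\alpha=\alpha$, hence $D_\alpha=\varnothing$ in your notation and $h_\alpha=g_{I_\alpha}$, contradicting $h_\alpha\in L$ outright. The requirement $I_\alpha=\alpha$ matters: for a mere closure point $\alpha<I_\alpha$ you only control $\delta_\beta$ for $\beta<\alpha$, which recovers just a proper initial segment of the row and gives no contradiction. With this one step inserted, your first two paragraphs become a complete proof essentially identical to the paper's; without it, the argument does not close.
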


\begin{proof}
Note that any generic for $\Add(\kappa, 1)^L$ is the characteristic function of a fresh subset of $\kappa$ so that $c_\beta\in \kapman^{L[G]}$ for any $\beta<\kappa$. Of course, the sequence $\langle c_\beta\vert\beta<\kappa\rangle$ is not in $\kapman^{L[G]}$, as one can compute the whole matrix $M$ (and thus the whole generic $G$) from this sequence. However, we can make this sequence fuzzy to result in an element of $\kapman^{L[G]}$. Let $\sim$ be the equivalence relation of eventual coincidence on $({}^{\kappa}2)^{\kapman^{L[G]}}$, i.e. 
$$x\sim y \Leftrightarrow \exists\delta<\kappa\ x\res[\delta, \kappa)= y\res[\delta, \kappa).$$
We call $\langle [c_\beta]_{\sim}\vert\beta<\kappa\rangle$ the \textit{fuzzy sequence}.

\begin{claim}
The fuzzy sequence is an element of $\kapman^{L[G]}$.
\end{claim} 

\begin{proof}
By Lemma \ref{nicedescrip}, it is enough to show that for every $\alpha<\kappa$, $L[G_{\geq I_\alpha}]$ knows of this sequence. But $L[G_{>I_\alpha}]$ contains  the matrix $M_{\geq\alpha}$ and thus the sequence
$$\langle c_\beta\res (\kappa\setminus \alpha)\vert\beta<\kappa\rangle$$
so that $L[G_{\geq\alpha}]$ can compute the relevant sequence of equivalence classes from this parameter.
\end{proof}

Finally, we argue that $\kapman^{L[G]}$ does not contain a choice sequence for the fuzzy sequence\footnote{That is, there is no sequence $\langle x_\beta\mid\beta<\kappa\rangle\in\kapman^{L[G]}$ with $x_\beta\in [c_\beta]_\sim$ for all $\beta<\kappa$.}. Heading toward a contradiction, let us assume that 
$$\langle x_\beta\vert\beta<\kappa\rangle\in\kapman^{L[G]}$$
is such a sequence. $L[G]$ knows about the sequence
$$\langle\delta_{\beta}\vert\beta<\kappa\rangle$$ where $\delta_\beta$ is the least $\delta$ with $x_\beta\res(\kappa\setminus\delta)=c_\beta\res(\kappa\setminus\delta)$. The set of $\lambda<\kappa$ that are closed under the map $\beta\longmapsto\delta_\beta$ is club in $\kappa$. As $\kappa$ is Mahlo in $L[G]$, there is an inaccessible $\alpha=I_\alpha<\kappa$ that is closed under $\beta\longmapsto\delta_\beta$. Now observe that 
$$x_\beta(\alpha)=1\Leftrightarrow c_\beta(\alpha)=1\Leftrightarrow r_\alpha(\beta)=1$$ 
holds for all $\beta<I_\alpha$, so that $r_\alpha\in\kapman^{L[G]}$. But this is impossible as clearly $r_\alpha$ is not fresh.

\end{proof}

Theorem \ref{nochoice} follows.
\medskip\\

\begin{rem}

The only critical property of $L$ that we need to make sure that $\kapman$ is not a model of choice in $L[G]$ is that $L$ has no nontrivial grounds, i.e. $L$ satisfies the ground axiom. $\GCH$ is convenient and implies that no cardinals are collapsed, but it is not necessary. The use of Jensen's covering lemma can also be avoided, as discussed earlier.
\end{rem}

\subsection{The $\omega_1$-mantle}\label{omega1mantlesection}

Up to now, we have focused on the $\kappa$-mantle for strong limit $\kappa$. We will get similar results for the $\omega_1$-mantle. There is some ambiguity in the definition of the $\omega_1$-mantle, depending on whether or not $\omega_1$ is considered as a parameter or as a definition. In the former case, it is the intersections of all grounds $W$ so that $W$ extends to $V$ via a forcing so that $W\models\vert\mathbb P\vert<\omega_1^V$, where in the latter case we would require $W\models\vert\mathbb P\vert <\omega_1^W$. These mantles are in general not equal. To make the distinction clear, we give the latter version the name ``Cohen mantle"\index{Mantle!Cohen} and denote it by $\cohman$. The reason for the name is, of course, that all non-trivial countable forcings are forcing-equivalent to Cohen forcing. 

\begin{lemm}
$\oneman\models\ZF$ and $\cohman\models\ZF$.
\end{lemm}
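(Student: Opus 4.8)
The plan is to apply the Fuchs--Hamkins--Reitz criterion recalled above: if a class $\Gamma$ of forcings is ground absolute and its grounds are downwards directed, then $\mantle_\Gamma\models\ZF$. So for each of $\oneman$ and $\cohman$ it suffices to identify the relevant class $\Gamma$ --- all forcings of size ${<}\omega_1$ in the first case, and the forcings forcing-equivalent to $\Add(\omega,1)$ together with the trivial forcing in the second --- and to check $(a)$ that $\Gamma$ is ground absolute and $(b)$ that the $\Gamma$-grounds are downwards directed. Only $\ZF$ is being claimed, so directedness (as opposed to set-directedness) is what we need.

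For $(a)$ I would invoke the Remark: it is enough that $\Gamma$ is provably closed under two-step iterations and quotients. Closure under two-step iterations is routine, since a $\PP$-name $\dot\QQ$ forced to have size ${\leq}\aleph_0$ is forced to be forcing-equivalent either to the trivial forcing or to $\Add(\omega,1)$, which is absolutely defined; hence $\PP\ast\dot\QQ$ is forcing-equivalent to $\PP$ or to $\PP\times\Add(\omega,1)$, and in the Cohen case one uses $\Add(\omega,1)\times\Add(\omega,1)\cong\Add(\omega,1)$. For closure under quotients, if $\BB$ is a complete subalgebra of $\RO(\PP)$ and $D\subseteq\RO(\PP)$ is dense of size ${\leq}\aleph_0$ (available since $\PP$ is countable, resp.\ Cohen), then the image of $D$ under the canonical projection $\RO(\PP)\to\BB$ is dense in $\BB$ and of size ${\leq}\aleph_0$; so $\BB$ has countable density, and the same argument in the generic extension shows the quotient $\RO(\PP)/\BB$ is forced to have countable density. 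A complete Boolean algebra of countable density is the completion of a countable forcing, and is Cohen-or-trivial in the appropriate sense; either way $\BB$ and the quotient lie in $\Gamma$, and by the Remark $\Gamma$ is ground absolute. (For the parameter version $\oneman$ the parameter ``$\omega_1^V$'' needs some care, since a forcing of ground-size ${<}\omega_1^V$ need not be countable in the ground, but the same closure scheme applies.)

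For $(b)$ I would reduce the directedness to the structure of complete subalgebras, using Usuba's downwards directed grounds theorem. Given two $\Gamma$-grounds $W_0,W_1$ of $V$, Usuba provides a common ground $U\subseteq W_0\cap W_1$ with $V=U[K]$ for some $U$-generic $K$ on a complete Boolean algebra $\BB\in U$; by the intermediate model theorem let $\BB_i\subseteq\BB$ be the complete subalgebra with $W_i=U[K\cap\BB_i]$. That $W_i$ is a $\Gamma$-ground of $V$ is equivalent to the quotient $\BB/\BB_i$ being forced (over $W_i$) to be countable, resp.\ Cohen-or-trivial. The goal is to produce a complete subalgebra $\BB_\ast\subseteq\BB_0\cap\BB_1$ of $\BB$ with $\BB/\BB_\ast$ forced to be countable (resp.\ Cohen-or-trivial); then $W_\ast\coloneqq U[K\cap\BB_\ast]$ is a $\Gamma$-ground contained in $W_0\cap W_1$, which is exactly what downwards directedness demands. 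The natural candidate is $\BB_\ast=\BB_0\cap\BB_1$, so the task becomes: show that the quotient by the meet of two complete subalgebras each having a small quotient again has a small quotient --- using, in the Cohen case, that complete subalgebras of a Cohen algebra, and their quotients, are themselves Cohen-or-trivial.

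I expect $(b)$, and specifically the control of $\BB/(\BB_0\cap\BB_1)$, to be the main obstacle: a priori the meet of two complete subalgebras can be far smaller than either, so the smallness of $\BB/\BB_0$ and $\BB/\BB_1$ must be exploited carefully, quite possibly via a Bukovsk\'y-style covering characterization of ``$V$ is a ${<}\omega_1$-sized (resp.\ Cohen) forcing extension of $W$'' rather than by manipulating the Boolean algebras alone. Once downwards directedness is in hand, both halves of the lemma follow immediately from the Fuchs--Hamkins--Reitz criterion.
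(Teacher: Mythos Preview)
Your plan is to invoke the Fuchs--Hamkins--Reitz criterion by verifying ground absoluteness and downwards directedness, and you correctly flag directedness as the crux. But that is exactly where the argument stops: you reduce to controlling $\BB/(\BB_0\cap\BB_1)$ and then offer no mechanism for doing so. The meet of two complete subalgebras with small quotients can in principle be much smaller than either, so the quotient by the meet need not be small; your Bukovsk\'y-style suggestion is not developed into an argument. Whether Cohen grounds (or $\omega_1^V$-grounds) are downwards directed in general is not a routine fact --- note that Usuba's directedness result (Fact \ref{stronglimitzffact}) needs $\kappa$ to be a strong limit, which $\omega_1$ never is --- and nothing in your proposal establishes it. So this is an outline with an honest, acknowledged gap at precisely the decisive step, rather than a proof.

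The paper bypasses directedness entirely by exploiting \emph{homogeneity}. Every relevant ground $W$ extends to $V$ via a homogeneous forcing: Cohen forcing itself, or --- in the $\oneman$ case when $\omega_1^W<\omega_1^V$ --- the collapse $\Col(\omega,\mu)$ where $(\mu^+)^W=\omega_1^V$. Homogeneity makes truth in $V$ about parameters from $W$ definable in $W$; since $\cohman$ (resp.\ $\oneman$) is definable in $V$ from at most the ordinal parameter $\omega_1^V\in W$, it is a definable class of $W$. Hence $\cohman\cap V_\alpha\in W$ for every $\alpha$ and every relevant ground $W$, giving $\cohman\cap V_\alpha\in\cohman$; together with closure under G\"odel operations this yields $\ZF$. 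This route is both shorter and sidesteps the directedness question you could not close.
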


\begin{proof}
First let us do it for $\cohman$. Clearly, $\cohman$ is closed under the G\" odel operations. It is thus enough to show that $\cohman\cap V_\alpha\in\cohman$ for all $\alpha\in\Ord$. Let $W$ be any Cohen-ground. As Cohen-forcing is homogeneous, $\cohman^V$ is a definable class in $W$. Hence, $\cohman\cap V_\alpha=\cohman\cap V_\alpha^W\in W$. As $W$ was arbitrary, this proves the claim.

Now onto $\oneman$. The above argument shows that all we need to do is show that $\oneman$ is a definable class in all associated grounds. So let $W$ be such a ground. There are two cases. First, assume that $\omega_1^W=\omega_1^V$. Then $W$ extends to $V$ via Cohen forcing, so $\oneman$ is definable in $W$. Next, suppose that $\omega_1^W<\omega_1^V$. This can only happen if $\omega_1^V$ is a successor cardinal in $W$, say $W\models\omega_1^V=\mu^+$. In this case, $W$ extends to $V$ via a forcing of $W$-size $\leq\mu$ and which collapses $\mu$ to be countable. It is well known that in this situation, $W$ extends to $V$ via $\Col(\omega,\mu)$, which is homogeneous as well, so once again, $\oneman$ is a definable class in $W$.
\end{proof}

Once again, choice can fail.

\begin{thm}\label{onemanfail}
Relative to the existence of a Mahlo cardinal, it is consistent that there is no wellorder of $\mathcal{P}(\omega_1^V)^{\oneman}$ in $\oneman$.
\end{thm}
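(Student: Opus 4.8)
\textbf{Proof proposal for Theorem \ref{onemanfail}.}

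The plan is to mimic the Mahlo construction of Section \ref{kappamahlosection}, collapsing the Mahlo cardinal $\kappa$ down to $\omega_1$ at the very end while preserving the structure that made choice fail in $\kapman^{L[G]}$. Concretely, I would start with $L$ with $\kappa$ the least Mahlo, force with $\PP=\prod^{{<}\kappa\text{-supp}}_{\lambda\in I\cap\kappa}\Add(\lambda,1)$ to obtain $L[G]$ as before, and then force with $\Col(\omega,{<}\kappa)$ to obtain $L[G][H]$; in this model $\omega_1^{L[G][H]}=\kappa$. The first task is to identify $\oneman^{L[G][H]}$. The natural guess, as the commented-out corollary in the excerpt suggests, is that $\oneman^{L[G][H]}=\kapman^{L[G]}$. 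For the inclusion $\supseteq$ I would use that $\kapman^{L[G]}=\bigcap_{\lambda\in I\cap\kappa}L[G_{>\lambda}]$ (Lemma \ref{nicedescrip}) together with the Lévy collapse factoring as $\Col(\omega,{<}\lambda)\times\Col(\lambda,{<}\kappa)$ over each $L[G_{>\lambda}]$, so that each $L[G_{>\lambda}]$ is an $\omega_1$-ground of $L[G][H]$ (the tail collapse has size $<\kappa=\omega_1^{L[G][H]}$ and is ${<}\lambda$-closed enough to not disturb things); intersecting gives $\kapman^{L[G]}\supseteq\oneman^{L[G][H]}$, wait — I mean the intersection of these $\omega_1$-grounds is contained in $\bigcap L[G_{>\lambda}]=\kapman^{L[G]}$, so $\oneman^{L[G][H]}\subseteq\kapman^{L[G]}$. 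For the reverse, I would show every $\omega_1$-ground $W$ of $L[G][H]$ contains $\kapman^{L[G]}$: such a $W$ satisfies the $\mu$-approximation property for some $\mu<\kappa$ (with $\mu=\omega_1^W$ or so), and the fresh-set and column-density arguments from Section \ref{kappamahlosection} (suitably re-run over $L[G][H]$, using that $\zerosharp\notin W$ and $\PP\in L\subseteq W$) show that enough of $G$ lands in $W$, hence $\kapman^{L[G]}=\bigcap L[G_{>\lambda}]\subseteq W$.

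Once $\oneman^{L[G][H]}=\kapman^{L[G]}$ is established, the conclusion is immediate from the work already done: Theorem ? of the previous subsection (the unnumbered ``$({<}\kappa+1,H_{\kappa^+})$-choice fails in $\kapman^{L[G]}$'') shows the fuzzy sequence $\langle[c_\beta]_\sim\mid\beta<\kappa\rangle$ lies in $\kapman^{L[G]}$ and has no choice sequence there; but a choice sequence for the fuzzy sequence is exactly a well-indexed family of subsets of $\kappa=\omega_1^{L[G][H]}$, and from it one recovers the map $\beta\mapsto\delta_\beta$ and then (by Mahloness of $\kappa$, which is $\omega_1^{L[G][H]}$) an inaccessible closed point $\alpha$, yielding $r_\alpha\in\kapman^{L[G]}$, a contradiction since $r_\alpha$ is not fresh. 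So $\mathcal P(\omega_1^V)^{\oneman}=\mathcal P(\kappa)^{\kapman^{L[G]}}$ admits no wellorder in $\oneman$. I should double-check that $\kappa$ being Mahlo in $L[G]$ is all that is used — the club-of-closed-points argument only needs stationarily many inaccessibles below $\kappa$ in the model computing $\delta_\beta$, which is $L[G][H]$; but $\kappa=\omega_1$ there is not Mahlo in $L[G][H]$ in the usual sense. This is the subtle point: the $\delta_\beta$ live in $L[G][H]$, but the column structure and the matrix $M$ live in $L[G]$, and $\kappa$ is Mahlo there. I would argue that the sequence $\langle\delta_\beta\mid\beta<\kappa\rangle$, being definable from the choice sequence (which is in $\oneman^{L[G][H]}\subseteq L[G]$) and from $\langle c_\beta\mid\beta<\kappa\rangle$ (computable in $L[G]$), actually lies in $L[G]$, so the Mahlo-ness of $\kappa$ in $L[G]$ applies and produces the closed inaccessible $\alpha$.

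The main obstacle I anticipate is the equality $\oneman^{L[G][H]}=\kapman^{L[G]}$, specifically the reverse inclusion, i.e. controlling \emph{all} $\omega_1$-grounds of $L[G][H]$. Unlike the strong-limit case where $\kappa$-grounds are tame (by Usuba/Hamkins, one gets the $\kappa$-approximation property), an $\omega_1$-ground $W$ of $L[G][H]$ could a priori have $\omega_1^W$ a successor cardinal in $W$ far below $\kappa$ and collapse a lot; one must check the fresh-set argument still goes through, using the covering lemma over $W$ (as $\zerosharp\notin W$) to approximate small subsets of $\kappa$ by sets in $L$, and that the columns $c_\beta$ are still generic enough over $W$. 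The homogeneity of $\Col(\omega,{<}\kappa)$ and the product structure of $\PP$ should make this work, but the bookkeeping — reconciling the two approximation parameters (the $\mu$ from the $\omega_1$-ground and the inaccessible $\lambda$ from $\PP$) — is where the real care is needed.
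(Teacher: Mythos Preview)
Your two-step approach (force with the Mahlo product $\PP$, then collapse with $\Col(\omega,{<}\kappa)$) is genuinely different from what the paper does, and it has a concrete gap.

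The paper forces once, with the \emph{bounded-support} product $\PP=\prod_{\alpha<\kappa}^{{<}\kappa\text{-supp}}\Col(\omega,\alpha)$. This single forcing simultaneously collapses $\kappa$ to $\omega_1$ and produces the matrix structure. Crucially, the paper never tries to compute $\oneman$ or $\cohman$ exactly. Instead it introduces the auxiliary model $N=\bigcap_{\alpha<\kappa}L[G_{>\alpha}]$ and proves only that $\mathcal P(\kappa)^{N}=\mathcal P(\kappa)^{\oneman}=\mathcal P(\kappa)^{\cohman}$ equals the set of fresh subsets of $\kappa$. That equality at the level of subsets of $\kappa$ is all one needs: one then shows $N$ has no wellorder of $\mathcal P(\kappa)^N$, using a fuzzy sequence of length $\omega$ (not $\kappa$), namely $\langle[d_n]_\sim\mid n<\omega\rangle$ where $d_n(\alpha)=g_\alpha(n)$; since $\kappa=\omega_1$, the $\delta_n$ are trivially bounded below $\kappa$ and no Mahlo argument is needed at this step.

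Your proposal runs into trouble already at the ``easy'' inclusion. You claim each $L[G_{>\lambda}]$ is an $\omega_1$-ground of $L[G][H]$, but the forcing from $L[G_{>\lambda}]$ to $L[G][H]$ is $\PP_{\leq\lambda}\times\Col(\omega,{<}\kappa)$, which has size $\kappa=\omega_1^{L[G][H]}$, not ${<}\kappa$; so $L[G_{>\lambda}]$ is \emph{not} an $\omega_1$-ground and your argument for $\oneman^{L[G][H]}\subseteq\kapman^{L[G]}$ does not go through. The models $L[G_{>\lambda}][H]$ \emph{are} $\omega_1$-grounds, but their intersection contains $H$ and is far from $\kapman^{L[G]}$. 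For the reverse inclusion you correctly sense difficulty; indeed the paper explicitly remarks (in its own, simpler model) that there are Cohen-grounds containing no $g_\alpha$ whatsoever, so a Lemma~\ref{nicedescrip}-style approximation argument cannot be expected to identify $\oneman$ on the nose. The commented-out corollary you spotted is precisely the approach you are attempting, and it was evidently abandoned.

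The moral is that matching only $\mathcal P(\kappa)$ (between $\oneman$ and an auxiliary $N$) is both sufficient for the theorem and much easier than matching the full models; the paper's single-forcing construction is designed so that this power-set computation reduces to a clean fresh-subset characterization via Solovay's lemma.
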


We remark that the Mahlo cardinal is used in a totally different way than in the last section. In the model we will construct, $\omega_1$ will be inaccessible in $\oneman$. Let us once again assume $V=L$ for the rest of the section and let $\kappa$ be Mahlo. Let $\mathbb P$ be the ``${<}\kappa$-support version of $\Col(\omega, {<}\kappa)$", that is
$$\mathbb P=\prod_{\alpha<\kappa}^{<\kappa-\text{support}} \Col(\omega, \alpha).$$
Let us pick a $\mathbb P$-generic filter $G$ over $V$. From now on, $\oneman$ will denote $\oneman^{V[G]}$ and $\cohman$ will denote $\cohman^{V[G]}$.

\begin{prop}\label{axaprop}
Suppose $\QQ$ is a forcing, $\gamma<\lambda$ and $\lambda$ is a cardinal. If $\QQ$ is $\mathrm{AA}(\gamma, \lambda)$ then in $V^\QQ$ there is no surjection from any $\beta<\gamma$ onto $\lambda$.
\end{prop}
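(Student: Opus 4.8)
The argument should be a fusion argument, the natural analogue for $\AxA(\gamma,\lambda)$ of the standard proof that a $\lambda$-cc (or ${<}\lambda$-closed) forcing does not collapse $\lambda$. Suppose towards a contradiction that $\beta<\gamma$ and $p\forces_\QQ\dot f\colon\check\beta\to\check\lambda$ is surjective; the case $\beta=0$ is vacuous since then $\lambda\geq 2$. For each $\xi<\beta$ fix a maximal antichain $A_\xi$ of $\QQ$ all of whose elements decide $\dot f(\check\xi)$ --- this exists because the conditions deciding $\dot f(\check\xi)$ are dense --- and for $a\in A_\xi$ fix $\eta^\xi_a<\lambda$ with $a\forces\dot f(\check\xi)=\check\eta^\xi_a$. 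The plan is to produce a single $q\leq p$ that, for every $\xi<\beta$ at once, is compatible with fewer than $\lambda$ members of $A_\xi$; such a $q$ will force $\ran(\dot f)$ into a ground-model set of ordinals of size $<\lambda$, contradicting surjectivity.

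To build $q$, fix a sequence $\langle\leq_\alpha\mid\alpha<\gamma\rangle$ witnessing $\AxA(\gamma,\lambda)$ and recursively construct $\langle p_\xi\mid\xi\leq\beta\rangle$ with $p_0=p$. At a successor $\xi+1\leq\beta$, apply \ref{AAcond2} to the antichain $A_\xi$, the index $\xi<\gamma$ and the condition $p_\xi$ to get $p_{\xi+1}\leq_\xi p_\xi$ compatible with fewer than $\lambda$ members of $A_\xi$. At a limit $\mu\leq\beta$ (and at $\mu=\beta$ if $\beta$ is a limit), apply \ref{AAcond3} to get a fusion $p_\mu$, so $p_\mu\leq_\xi p_\xi$ for all $\xi<\mu$. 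A routine induction on $\eta\leq\beta$, using \ref{AAcond1} together with transitivity of each $\leq_\zeta$, shows that at every stage the partial sequence satisfies the hypothesis of \ref{AAcond3}, i.e. $p_\eta\leq_\zeta p_\zeta$ whenever $\zeta<\eta$; since all the indices and all the fusions invoked have ordinals $<\gamma$, each application is legitimate. Put $q\coloneqq p_\beta$. Then $q\leq_\QQ p_\xi$ for every $\xi<\beta$, so in particular $q\leq p$, and $q\leq_\QQ p_{\xi+1}$, so $q$ is compatible with at most the fewer-than-$\lambda$ many members of $A_\xi$ that are compatible with $p_{\xi+1}$.

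To finish, for $\xi<\beta$ set $B_\xi=\{a\in A_\xi\mid a\| q\}$ and $S_\xi=\{\eta^\xi_a\mid a\in B_\xi\}$, so $|S_\xi|\leq|B_\xi|<\lambda$, and let $T=\bigcup_{\xi<\beta}S_\xi\in V$. If $r\leq q$ and $r\forces\dot f(\check\xi)=\check\eta$, then by maximality of $A_\xi$ some $a\in A_\xi$ is compatible with $r$, hence with $q$, so $a\in B_\xi$; since $a$ decides $\dot f(\check\xi)$ we conclude $\eta=\eta^\xi_a\in S_\xi$. Thus $q\forces\ran(\dot f)\subseteq\check T$. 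Since $\lambda$ is regular (which is the case in all applications of this proposition, and without which the proof would need adjustment) and $T$ is a union of $|\beta|<\lambda$ many sets each of size $<\lambda$, we get $|T|<\lambda=|\lambda|$, so $T\subsetneq\lambda$ and $q$ forces $\dot f$ not to be onto $\check\lambda$ --- contradicting $q\leq p$. I expect the only genuine work to be the fusion bookkeeping (verifying that \ref{AAcond3} applies at each limit stage) and this last cardinality estimate; a singular $\lambda$ with $\cof(\lambda)<\gamma$ would be the one case requiring a different treatment, but it does not occur here.
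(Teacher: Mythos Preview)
Your proof is correct and is precisely the ``straightforward adaptation of the proof that Axiom A forcings preserve $\omega_1$'' that the paper invokes without further detail; your fusion construction and the final cardinality estimate are exactly what is intended. Your caveat about singular $\lambda$ is honest and appropriate---the paper does not address it either, and every application in the paper has $\lambda$ regular.
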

\begin{proof}
This is a straightforward adaptation of the proof that Axiom A forcings preserve $\omega_1$.
\end{proof}

 The following lemma is the only significant use of the Mahloness of $\kappa$.

\begin{lemm}
$\PP$ satisfies $\mathrm{AA}(\kappa,\kappa)$. 
\end{lemm}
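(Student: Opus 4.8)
The plan is to exhibit orderings $\langle\leq_\gamma\mid\gamma<\kappa\rangle$ witnessing $\mathrm{AA}(\kappa,\kappa)$ in the sense of Definition~\ref{generalaa}. The argument of Lemma~\ref{axamahlocaslemm} does \emph{not} transfer verbatim: there the fusion step \ref{AAcond3} rested on the tails $\PP\res[\gamma,\kappa)$ being highly closed, whereas here every factor $\Col(\omega,\alpha)$ is atomless and not even $\sigma$-closed, so the naive ordering ``$r\res\gamma=q\res\gamma$'' admits $\leq_\cdot$-descending $\omega$-sequences (e.g.\ supported on a single coordinate $\delta\geq\omega$ with $\len(p_n(\delta))\to\omega$) with no fusion. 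Instead I would use the \emph{rigid} ordering
\[
r\leq_\gamma q\iff r\leq_\PP q,\ \ r\res\supp(q)=q,\ \ \text{and}\ \ \supp(r)\setminus\supp(q)\subseteq[\gamma,\kappa),
\]
i.e.\ $r$ extends $q$ only by adjoining new coordinates, all of them $\geq\gamma$, without touching $q$ itself. Then \ref{AAcond1} is immediate: $\leq_\gamma\subseteq\leq_\PP$ by fiat, and if $\alpha\leq\beta$ and $r\leq_\beta q$ the new coordinates of $r$ lie in $[\beta,\kappa)\subseteq[\alpha,\kappa)$, so $r\leq_\alpha q$.

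For \ref{AAcond3}, a sequence $\vec p=\langle p_\alpha\mid\alpha<\beta\rangle$ with $p_\gamma\leq_\alpha p_\alpha$ whenever $\alpha<\gamma<\beta$ is just a $\subseteq$-increasing chain of conditions, each extending its predecessors purely by adjoining coordinates. Since $\kappa$ is regular and each $\supp(p_\alpha)$ has size $<\kappa$, the union $p_\beta:=\bigcup_{\alpha<\beta}p_\alpha$ again has support of size $<\kappa$; it is a condition precisely because $\PP$ is a genuine product, so a coherent union of conditions is one; and visibly $p_\beta\leq_\alpha p_\alpha$ for all $\alpha<\beta$. (This is the payoff of passing to the rigid ordering: with it there are no ``tail'' coordinates whose $\Col$-conditions could fail to stabilise along a fusion, which is exactly what went wrong with ``$r\res\gamma=q\res\gamma$''.)

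The substance of the lemma, and the one place Mahloness of $\kappa$ is genuinely used, is \ref{AAcond2}; in contrast to the situation of Section~\ref{kappamahlosection}, $\PP$ here is \emph{not} $\kappa$-c.c.\ (one builds $\kappa$-antichains from conditions with long, mutually overlapping supports), so this cannot be dodged. Given a maximal antichain $A$, $\gamma<\kappa$ and $p\in\PP$, note first that any $q\leq_\gamma p$ extends $p$, so $\{a\in A: a\parallel q\}\subseteq\{a\in A: a\parallel p\}$; thus it suffices to adjoin, on coordinates $\geq\gamma$, information incompatible with all but $<\kappa$ of the members of $A$ that are compatible with $p$. I would first pass, using that the inaccessibles below $\kappa$ form a stationary set, to an inaccessible $\mu\in(\gamma,\kappa)$ and an elementary submodel $X\prec H_{\kappa^+}$ with $A,\PP,p,\gamma\in X$ and $X\cap\kappa=\mu$, obtained by intersecting the club of fixed points of a continuous chain of submodels with the stationary set of inaccessibles, arranged so that $\PP\cap X$ is a complete suborder of $\PP$ and $A\cap X$ is predense below $p$. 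Then, enumerating $\PP\res[\gamma,\mu)$ (of size $<\kappa$) as $\langle q_\xi\mid\xi<\delta\rangle$, I would run along $\delta$ a $\leq_\gamma$-increasing (hence by \ref{AAcond3} coherent) sequence of conditions above $p$ with supports in $\supp(p)\cup[\gamma,\mu)$, at stage $\xi$ adjoining coordinates forcing incompatibility with the element of $A\cap X$ associated with $q_\xi$ whenever possible --- exactly the diagonalisation of Lemma~\ref{axamahlocaslemm}, now carried by the adjoining-coordinates ordering in place of tail-closure. The fusion $q$ then satisfies $q\leq_\gamma p$ and is compatible with at most $|A\cap X|<\kappa$ members of $A$, since any such $a$ is compatible with $p$ and, by predensity, with some $b\in A\cap X$, whence $a=b$.

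\textbf{Main obstacle.} The delicate point is the reflection step: one needs $\mu$ inaccessible so that the relevant restrictions $a\res\mu$ of members of $A$ actually land in $X$ --- which can fail when $\supp(a)\cap\mu$ is cofinal in $\mu$ --- and \emph{simultaneously} so that $\PP\cap X$ is a complete suborder of $\PP$ and $A\cap X$ is predense below $p$; securing both at once is where Mahloness, not just inaccessibility of $\kappa$ (which in the extremal case leaves no inaccessible $\mu<\kappa$ to reflect to), is indispensable. Granting \ref{AAcond2}, one gets as in Section~\ref{kappamahlosection}, via Propositions~\ref{axaprop} and~\ref{axastatpresprop}, that $\PP$ preserves stationary subsets of $\kappa$ and forces no surjection from a proper initial segment onto $\kappa$, so $\kappa$ remains regular --- hence $\kappa=\omega_1^{V[G]}$ --- in the extension.
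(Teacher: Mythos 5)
Your orderings (freeze the current support, allow new coordinates only in $[\gamma,\kappa)$) and your handling of \ref{AAcond1} and \ref{AAcond3} are fine and essentially coincide with the paper, which uses the single ordering $p\leq^{\ast}q$ iff $p\leq q$ and $p\res\supp(q)=q$. The gap is in \ref{AAcond2}. Your count rests on being able to choose $X\prec H_{\kappa^+}$ of size ${<}\kappa$ with $A\cap X$ predense below $p$ in $\PP$; this is impossible in general. Since $\PP$ is not $\kappa$-c.c.\ below $p$ (as you note yourself), there is a maximal antichain $A$ containing $\kappa$ many pairwise incompatible extensions of $p$, and any subfamily of $A$ that is predense below $p$ must contain every one of them; so no subfamily of size ${<}\kappa$ is predense below $p$, no matter how cleverly the club of submodels is intersected with the inaccessibles. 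Tellingly, your final argument (``any $a$ compatible with $q$ is compatible with $p$, hence by predensity compatible with some $b\in A\cap X$, hence $a=b$'') never uses the fusion $q$ you built: if that predensity were available, $q=p$ would already witness \ref{AAcond2} and no diagonalisation would be needed. The diagonalisation itself is also inverted: extending so as to become \emph{incompatible} with the designated member of $A\cap X$ at stage $\xi$ only disposes of the ${<}\kappa$ many designated elements and gives no control over the remaining (possibly $\kappa$ many) members of $A$, which your argument never addresses except through the unavailable predensity.

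What Mahloness actually provides, and what the paper uses, is much weaker: a regular $\lambda\in(\gamma,\kappa)$ with $p\in\PP\res\lambda$ such that every $r\in\PP\res\lambda$ is compatible with some $a\in A\cap\PP\res\lambda$, i.e.\ predensity of $A\cap\PP\res\lambda$ inside the bounded-support part $\PP\res\lambda$ only, not below $p$ in all of $\PP$. The second missing ingredient is a device for catching members of $A$ whose supports are unbounded in $\lambda$ or reach beyond $\lambda$: because the frozen support grows during the fusion, the initial segment of such an $a$ that must be recognised depends on the stage, and one cannot enumerate in advance the (up to $2^{\lambda}$ many) possible traces of such $a$'s on the final support, so your enumeration of $\PP\res[\gamma,\mu)$ does not suffice. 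The paper handles this with $\diamondsuit_\lambda$ (available since $V=L$): the diamond sequence guesses $b\res\alpha$ stationarily often, this meets the club of stages $\alpha$ at which $\sup\supp(q_\alpha)=\alpha$, and at such a stage the construction commits the tail of the fusion to lie \emph{below} some $a\in A\cap\PP\res\lambda$ compatible with $b\res\alpha$; any incompatibility of $a$ with $b$ would then occur on $[\alpha,\lambda)$ and be inherited by $q$, so $b=a$, whence at most $\lambda$ many members of $A$ are compatible with $q$. Your proposal has no substitute for this guessing step, and once the false predensity claim is removed, the construction you describe does not bound the number of members of $A$ compatible with the fusion.
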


\begin{proof}
We define $\leq_\alpha$ independent of $\alpha<\kappa$ as the order $\leq^\ast$: Let $p\leq^\ast q$ iff $p\leq q$ and $p\res\supp(q)=q$. The only nontrivial part is showing that for any antichain $A$ and any $p\in\PP$ there is $q\leq^\ast p$ with 
$$\vert\{ a\in A\mid a\Vert q\}\vert<\kappa.$$
Let
$$\PP\res\alpha\coloneqq\{p\in\PP\mid\sup\supp(p)<\alpha\}$$
for all $\alpha<\kappa$. We will proceed to find some $q$ with the desired property. For convenience, we may assume that $A$ is a maximal antichain. As $\kappa$ is Mahlo, there is a regular $\lambda<\kappa$ so that $p\PP\res\lambda$ and any $r\in\PP\res\lambda$ is compatible with some $a\in A\cap\PP\res{\lambda}$. As $V=L$, $\diamondsuit_\lambda$ holds. Thus there is a sequence $\vec d\coloneqq \langle d_\alpha\mid\alpha<\lambda\rangle$ with 
\begin{enumerate}[label=$(\vec d.\roman*)$]
\item $d_\alpha\in\PP_{\leq\alpha}$ and
\item for all $r\in\PP_{\leq\lambda}$ there are stationarily many $\alpha<\lambda$ with $d_\alpha=r\res\alpha$.
\end{enumerate}
Construct a sequence 
$$\langle q_\alpha\mid\alpha<\lambda\rangle$$
of conditions in $\PP\res\lambda$ with $q_\alpha\leq^\ast q_\beta$ for all $\alpha<\beta<\lambda$ as follows: Set $q_0=p$. If $q_\beta$ is defined for all $\beta<\alpha$, let first $q'_\alpha=\bigcup_{\beta<\alpha}q_\beta$ and note that this is a condition. Let $\gamma_\alpha=\sup\supp(q_\alpha')$. Now find $a\in A\cap \PP\res\lambda$ that is compatible with $d_{\gamma_\alpha}$ and let 
$$q_\alpha\coloneqq q_\alpha'^\frown a\res[\gamma_\alpha, \lambda).$$
Finally, set $q=\bigcup_{\alpha<\lambda}q_\alpha$. We have to show that $q$ is compatible with only a few elements of $A$, so suppose $b\in A$ is compatible with $q$. The properties of $\vec d$ guarantee that there is $\alpha<\lambda$ so that 
\begin{enumerate}[label=$(\alpha.\roman*)$]
\item $\gamma_\alpha=\alpha$ and
\item $d_\alpha= b\res\alpha$.
\end{enumerate}
Hence in the construction of $q_{\alpha+1}$ we found some $a\in A\cap\PP\res\lambda$ compatible with $b\res\alpha$ and have $q_{\alpha+1}\res[\alpha,\lambda)\leq a\res[\alpha,\lambda)$. If $a\neq b$, then $a\perp b$ and the incompatibility must lie in the interval $[\alpha, \lambda)$. But then $q_{\alpha+1}$ and $b$ are incompatible as well, contradiction. Thus $b=a$ and it follows that $q$ is compatible with at most $\lambda$-many elements of $A$.
\end{proof}

\begin{cor}\label{omega1col}
We have
\begin{enumerate}[label=$(G.\roman*)$]
\item\label{omega1colcond1} $\omega_1^{L[G]}=\kappa$ and
\item\label{omega1colcond2} if $g\colon \omega\rightarrow\Ord\in L[G]$ then there is some $\alpha<\kappa$ so that $g\in V[G_{\leq\alpha}]$.
\end{enumerate}

\end{cor}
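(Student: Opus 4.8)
The plan is to extract both parts from the fact, just established, that $\PP$ satisfies $\mathrm{AA}(\kappa,\kappa)$, using in addition that $\PP$ is a $<\kappa$-support \emph{product}. For \ref{omega1colcond1}, first note that $\mathrm{AA}(\kappa,\kappa)$ implies $\mathrm{AA}(\gamma,\kappa)$ for every $\gamma<\kappa$: one restricts the witnessing sequence to $\langle\leq_\alpha\mid\alpha<\gamma\rangle$, and \ref{AAcond1}--\ref{AAcond3} are inherited since they only refer to indices and fusion lengths below $\gamma$. By Proposition \ref{axaprop}, for every $\gamma<\kappa$ there is in $L[G]$ no surjection from any $\beta<\gamma$ onto $\kappa$; letting $\gamma$ vary, $\kappa$ is a cardinal in $L[G]$, hence uncountable there. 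Conversely, for each infinite $\gamma<\kappa$ the product $\PP$ factors as $\Col(\omega,\gamma)\times\PP'$, so $G$ induces a $\Col(\omega,\gamma)$-generic over $L$ and thus a surjection $\omega\to\gamma$ lies in $L[G]$. Hence $\kappa$ is the least uncountable cardinal of $L[G]$, i.e.\ $\omega_1^{L[G]}=\kappa$.

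For \ref{omega1colcond2}, fix $g\colon\omega\to\Ord$ in $L[G]$ with name $\dot g$ and $p\in G$ forcing ``$\dot g\colon\omega\to\Ord$''. For each $n$ choose a maximal antichain $A_n$ of conditions below $p$, each deciding $\dot g(\check n)$. Using \ref{AAcond2} I would build a fusion sequence $q_0=p$, $q_{n+1}\leq_n q_n$ with $|\{a\in A_{n+1}\mid a\| q_{n+1}\}|<\kappa$; since $\leq_{n+1}\subseteq\leq_n$ this yields $q_m\leq_n q_n$ for $n<m$, so \ref{AAcond3} provides $q_\omega$ with $q_\omega\leq_n q_n$ for all $n$. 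Then $A_n'\coloneqq\{a\in A_n\mid a\| q_\omega\}$ has size $<\kappa$, so $b\coloneqq\bigcup_{n<\omega}\bigcup_{a\in A_n'}\supp(a)$ has size $<\kappa$; as $\kappa$ is regular, $b$ is bounded, say $b\subseteq\alpha+1$ with $\alpha<\kappa$.

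The crux is to show $q_\omega\forces\dot g\in V[\dot G\res b]$, where $\dot G\res b$ names the part of the generic on coordinates in $b$, using the product factoring $\PP\cong(\PP\res b)\times(\PP\res(\kappa\setminus b))$; together with $b\subseteq\alpha+1$ this gives $g\in V[G\res b]\subseteq V[G_{\leq\alpha}]$, and since the set of conditions admitting such a bounded $b$ is dense below $p$, the generic $G$ realizes this. For the crux one checks that each $A_n'$, viewed inside $\PP\res b$ (legitimate as $\supp(a)\subseteq b$ for $a\in A_n'$), is predense below $q_\omega\res b$: given $r\leq q_\omega\res b$, the condition $r^\frown q_\omega\res(\kappa\setminus b)$ extends $q_\omega$, hence is compatible with some $a\in A_n$, which therefore lies in $A_n'$, and restricting a common extension to $b$ witnesses $a\| r$. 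Consequently the $\PP\res b$-name $\dot y$ reading off at $n$ the value forced by the unique element of $A_n'$ in the generic is a total function below $q_\omega\res b$, and, reinterpreting $\dot y$ as a $\PP$-name via the projection, $q_\omega\forces\dot g=\dot y$: whenever $q_\omega\in G$, the unique $a\in A_n\cap G$ is compatible with $q_\omega$, so $a\in A_n'$, and $a$ is then the unique element of $A_n'\cap(G\res b)$ since $\supp(a)\subseteq b$. I expect the only real difficulty to be this two-step-factoring bookkeeping --- transferring predensity and the relevant part of each antichain between $\PP$ and $\PP\res b$ --- rather than anything conceptual; the substance is just the use of \ref{AAcond2} and \ref{AAcond3} to confine $\dot g$ to fewer than $\kappa$ coordinates.
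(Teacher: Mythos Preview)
Your argument is correct (modulo a harmless indexing slip: the fusion as written never thins $A_0$, only $A_{n+1}$ for $n\geq 0$; shift the indexing by one). For \ref{omega1colcond1} you do essentially what the paper does, only more carefully in quoting Proposition~\ref{axaprop}.

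For \ref{omega1colcond2} you take a genuinely different and considerably heavier route. The paper avoids the ground-model fusion and product-factoring entirely by working directly in $L[G]$: for each $n<\omega$ it picks, by genericity, a condition $p_n\in G$ deciding $\dot g(\check n)$. Each $\supp(p_n)$ is bounded below $\kappa$, and since \ref{omega1colcond1} already gives $\kappa=\omega_1^{L[G]}$, the countable supremum $\alpha=\sup_{n<\omega}\sup\supp(p_n)$ is ${<}\kappa$ as computed in $L[G]$. Then every $p_n$ lies in $G_{\leq\alpha}$, and $g$ is read off in $L[G_{\leq\alpha}]$ from $\dot g$ together with these conditions. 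No antichain-thinning, no fusion, no transfer of predensity between $\PP$ and $\PP\res b$ is needed.

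Your approach has the merit of being a pure ground-model density argument, and the bookkeeping you flag (moving between $\PP$ and $\PP\res b$) is handled correctly. But the paper's shortcut is worth internalizing: once \ref{omega1colcond1} is available, countably many conditions chosen \emph{from the generic} automatically have jointly bounded support, which is exactly the content of \ref{omega1colcond2}.
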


\begin{proof}
To see \ref{omega1colcond1}, note that $\PP$ collapses all cardinals ${<}\kappa$ to $\omega$, so $\omega_1^{L[G]}\geq\kappa$. As $\PP$ satisfies $\mathrm{AA}(\kappa, \kappa)$, there is no surjection from $\omega$ onto $\kappa$ in $L[G]$.\\
Next, let us prove \ref{omega1colcond2}. Let $\dot g\in L$ be a name for $g$. In $L[G]$, find a decreasing sequence of conditions $\langle p_n\mid n<\omega\rangle$ in $G$ so that $p_n$ decides the value of $\dot g(\check n)$ (from the perspective of $L$). Let $\alpha=\sup_{n<\omega}\sup\supp (p_n)$. By \ref{omega1colcond1}, $\alpha<\kappa$. But then $L[G_{\leq\alpha}]$ can compute the whole of $g$. 
\end{proof}

From now on, $\oneman$ denotes $\oneman^{L[G]}$ and $\cohman$ is $\cohman^{L[G]}$.
Let us define an auxiliary model $N=\bigcap_{\alpha<\kappa} L[G_{>\alpha}]$. It is clear that $\oneman\subseteq N$.

Recall the following fact due to Solovay.

\begin{fact}[Solovay, \cite{solovaylebmeasure}]\label{solovayfact}
If $G, H$ are mutually generic filters over $V$ (for any forcings) then $V[G]\cap V[H]=V$.
\end{fact}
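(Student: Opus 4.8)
The plan is to prove this by the product forcing theorem together with a density argument, after first reducing to sets of ordinals; this is the classical argument. Fix forcings $\PP$ and $\QQ$ such that $G$ is $\PP$-generic over $V$, $H$ is $\QQ$-generic over $V$, and $G\times H$ is $(\PP\times\QQ)$-generic over $V$ --- this being the meaning of \emph{mutually generic}. The inclusion $V\subseteq V[G]\cap V[H]$ is immediate, so the content is the reverse inclusion.

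I would first reduce to the case $x\subseteq\Ord$. Given $x\in V[G]\cap V[H]$, argue by $\in$-induction: since $V[G]$ and $V[H]$ are transitive, every $y\in x$ also lies in $V[G]\cap V[H]$, hence in $V$ by the inductive hypothesis, so $x\subseteq V$. By Replacement in $V[G]$ the ranks of the elements of $x$ are bounded by some $\gamma$, and ranks of elements of $V$ are absolute, so $x\subseteq V_\gamma^V$. Fixing in $V$, using $\AC$, a bijection $b\colon V_\gamma^V\to\delta$ for an appropriate ordinal $\delta$, the set $b[x]\subseteq\delta$ lies in $V[G]$ (as $b,x\in V[G]$) and in $V[H]$ (as $b\in V\subseteq V[H]$ and $x\in V[H]$), while $x=b^{-1}[b[x]]$ is computed from $b[x]$ inside $V$; so it suffices to treat sets of ordinals.

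So suppose $x\subseteq\Ord$ with $x\in V[G]\cap V[H]$. Choose a $\PP$-name $\dot x$ and a $\QQ$-name $\dot y$ with $\dot x^G=x=\dot y^H$, and regard each as a $(\PP\times\QQ)$-name via the appropriate coordinate projection, so that $\dot x^{G\times H}=x=\dot y^{G\times H}$. Pick $(p_0,q_0)\in G\times H$ with $(p_0,q_0)\forces_{\PP\times\QQ}\dot x=\dot y$, and set, in $V$,
$$z=\{\alpha\in\Ord : p_0\forces_\PP\check\alpha\in\dot x\}.$$
Since $p_0\in G$ we get $z\subseteq\dot x^G=x$. Conversely, if $\alpha\in x\setminus z$, pick $p_1\leq_\PP p_0$ with $p_1\forces_\PP\check\alpha\notin\dot x$; since $\dot x$ depends only on the first coordinate and ``$\check\alpha\in\dot x$'' is $\Delta_0$ and hence absolute, $(p_1,q_0)\forces_{\PP\times\QQ}\check\alpha\notin\dot x$, so $(p_1,q_0)\forces_{\PP\times\QQ}\check\alpha\notin\dot y$; but $\dot y$ depends only on the second coordinate, so $q_0\forces_\QQ\check\alpha\notin\dot y$, and as $q_0\in H$ this gives $\alpha\notin\dot y^H=x$, a contradiction. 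Hence $x=z\in V$.

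The delicate point is the interchange between forcing over $\PP\times\QQ$ and forcing over its factors: for a name depending on a single coordinate and an absolute formula $\psi$, the statement ``$(p,q)\forces_{\PP\times\QQ}\psi$'' should be shown equivalent to the corresponding single-factor forcing statement. I would justify this from the product lemma --- a $(\PP\times\QQ)$-generic restricts to a generic on each coordinate --- together with absoluteness of $\psi$ between $V[G']$ and $V[G'][H']$. That bookkeeping, rather than the density argument displayed above, is the part to be careful about; the remainder is routine.
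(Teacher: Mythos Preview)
Your argument is correct and is the classical proof of Solovay's lemma. Note, however, that the paper does not actually supply a proof of this statement: it is recorded as a \emph{Fact} with a citation to Solovay \cite{solovaylebmeasure} and used as a black box. So there is no ``paper's own proof'' to compare against; you have simply reconstructed the standard argument, and done so accurately. The reduction to sets of ordinals, the choice of $(p_0,q_0)\in G\times H$ forcing $\dot x=\dot y$, and the definition of $z$ in $V$ via what $p_0$ decides about $\dot x$ are all exactly as one finds in, e.g., Jech or Kunen, and your caveat about passing between $\PP\times\QQ$-forcing and single-factor forcing for coordinate-dependent names is the right place to flag care, though it is routine.
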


\begin{prop}
We have that
\begin{enumerate}[label=$(N.\roman*)$]
\item\label{kapmanNcond1} $N\models\ZF$ and
\item\label{kapmanNcond2} $N\cap\mathcal{P}(\kappa)=\oneman\cap\mathcal{P}(\kappa)=\cohman\cap\mathcal{P}(\kappa)=\{a\subseteq\kappa\mid\forall\beta<\kappa\ a\cap\beta\in V\}$.
\end{enumerate}
\end{prop}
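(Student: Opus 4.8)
The plan is to prove the three displayed equalities in \ref{kapmanNcond2} together with $N \models \ZF$, and the central structural fact underpinning everything is Corollary \ref{omega1col}\ref{omega1colcond2}: every countable sequence of ordinals in $L[G]$ already lives in some $V[G_{\leq\alpha}]$ with $\alpha < \kappa$. Let me describe the chain of inclusions. Write $R := \{a \subseteq \kappa \mid \forall \beta < \kappa\ a \cap \beta \in V\}$ for the right-hand set. First I would show $N \cap \mathcal{P}(\kappa) \subseteq R$: if $a \in N$ then $a \in L[G_{>\beta}]$ for every $\beta < \kappa$, and since $\PP_{>\beta}$ factors as a product whose first coordinate is $\Col(\omega, \beta)$-like — more precisely $\PP_{\leq\beta}$ absorbs the relevant information and $\PP_{>\beta}$ adds no new subsets of $\beta$ because... actually the cleanest route is: $G_{\leq\beta}$ and $G_{>\beta}$ are mutually generic over $L$, so by Solovay's Fact \ref{solovayfact}, $L[G_{\leq\beta}] \cap L[G_{>\beta}] = L$; hence if $a \in L[G_{>\beta}]$ then $a \cap \beta \in L[G_{>\beta}]$, but $a \cap \beta$ is determined by $G \res (\text{stuff below }\beta)$... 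I need to be careful. The honest argument: $a \cap \beta \in L[G_{>\beta}]$ always since $a \in L[G_{>\beta}]$; and separately $a \cap \beta \in L[G_{\leq\beta}]$ because $a \in L[G]$ and $a \cap \beta$ is a bounded piece — but that last step needs that $L[G] $ and $L[G_{\leq \beta}]$ agree on... no. The right approach is that $a \cap \beta$, being a subset of $\beta < \kappa = \omega_1^{L[G]}$, is essentially coded by a function $\omega \to \beta$ after composing with the collapse in $G_{\leq\beta}$; by \ref{omega1colcond2} it lies in some $V[G_{\leq\alpha}]$; and it lies in $L[G_{>\beta}]$; by mutual genericity of $G_{\leq\alpha'}$ (for suitable $\alpha' \geq \beta$, splitting appropriately) and $G_{> \beta}$ applied via Solovay, one concludes $a \cap \beta \in L = V$.

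Next I would establish $R \subseteq \cohman \cap \mathcal{P}(\kappa)$ and $R \subseteq \oneman \cap \mathcal{P}(\kappa)$. Here one uses that every $\omega_1$-ground (respectively Cohen-ground) $W$ of $L[G]$ satisfies the $\omega_1^{L[G]}$-approximation property, i.e. the $\kappa$-approximation property. Given $a \in R$ and such a ground $W$, every $\kappa$-approximation $a \cap y$ with $y \in W$, $|y|^W \leq \kappa$ — but actually the approximation property for these grounds is with approximations of size $< \omega_1 = \kappa$, i.e. countable-in-$W$ size; since $a \in R$ means $a \cap \beta \in L \subseteq W$ for bounded $\beta$, and every set of ordinals of $W$-size $\leq \omega$ ... hmm, $W$ may think $\kappa$ is large. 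Let me reconsider: the relevant approximation property for a forcing of $W$-size $< \omega_1^W$ is the $\omega$-approximation (finite? no). I will instead invoke that these grounds have the $\delta$-approximation property where $\delta$ is the size of the forcing, and argue that any countable-in-$L[G]$ subset of $a$ lies in $L$: since $a \in R$ and a countable subset of $\kappa$ in $L[G]$ is bounded below $\kappa$ (as $\kappa = \omega_1^{L[G]}$ is regular there), it equals $(a\cap\beta) \cap z$ for some bounded $\beta$ and some $z$, which is in $L \subseteq W$. So all small approximations are in $W$, giving $a \in W$; as $W$ ranges over all such grounds, $a \in \cohman$ and $a \in \oneman$. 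Combined with the trivial inclusions $\oneman \subseteq \cohman$... wait, which contains which — $\oneman$ intersects fewer grounds if $\omega_1^W$ can differ, actually from the $\ZF$-lemma's proof both are among grounds extending via homogeneous forcing; I would just prove $\oneman \cap \mathcal{P}(\kappa) \subseteq N \cap \mathcal{P}(\kappa)$ directly (every $L[G_{>\alpha}]$ is an $\omega_1$-ground since $G_{\leq\alpha}$ is, in $L[G_{>\alpha}]$, generic for a forcing collapsing things to be countable of the right size) and similarly $\cohman \cap \mathcal{P}(\kappa) \subseteq N \cap \mathcal{P}(\kappa)$, closing the cycle $N \cap \mathcal{P}(\kappa) \subseteq R \subseteq \oneman \cap \mathcal{P}(\kappa), \cohman \cap \mathcal{P}(\kappa) \subseteq N \cap \mathcal{P}(\kappa)$.

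For part \ref{kapmanNcond1}, $N \models \ZF$, I would mimic the argument already given in the $\ZF$-lemma and in Lemma \ref{nicedescrip}'s surrounding remarks: $N$ is clearly closed under Gödel operations and transitive, so it suffices to show $N \cap V_\xi \in N$ for every ordinal $\xi$. For each $\alpha < \kappa$, $\PP_{>\alpha}$ factors so that $L[G]$ is a forcing extension of $L[G_{>\alpha}]$ by (something equivalent to) $\Col(\omega, {<}\alpha)$-type homogeneous forcing — more precisely $\PP_{\leq\alpha} = \prod_{\beta<\alpha}\Col(\omega,\beta)$ with full support below $\alpha$, which is weakly homogeneous. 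Homogeneity lets $L[G_{>\alpha}]$ define $N^{L[G]}$ (the intersection is over the same index set, evaluated the same way in any $L[G_{>\alpha}]$ since $\PP_{>\beta}$-generics for $\beta$ in a tail are mutually available), hence $N \cap V_\xi = N \cap V_\xi^{L[G_{>\alpha}]} \in L[G_{>\alpha}]$; as $\alpha$ was arbitrary, $N \cap V_\xi \in N$.

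The main obstacle I expect is the careful bookkeeping in the $N \cap \mathcal{P}(\kappa) \subseteq R$ direction: one must legitimately combine Corollary \ref{omega1col}\ref{omega1colcond2} (to push a bounded piece of $a$ down into some $V[G_{\leq\alpha}]$) with Solovay's Fact \ref{solovayfact} (mutual genericity of a bottom block and a tail block of the product $\PP$), and this requires choosing the split point correctly — the product structure with ${<}\kappa$-support means $G_{\leq\alpha}$ and $G_{>\alpha}$ genuinely are mutually generic over $L$, so $V[G_{\leq\alpha}] \cap V[G_{>\alpha}] = V = L$, and $a\cap\beta$ lands in both intersections' sides for a suitable common refinement of the index sets. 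Getting the factorization lemmas for the ${<}\kappa$-support product stated cleanly (closure of tails, homogeneity of initial blocks, mutual genericity) is the real work; once those are in hand each inclusion is short.
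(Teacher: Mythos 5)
Your proposal follows essentially the same route as the paper: prove $N\models\ZF$ by observing that $N$ is definable in each $L[G_{>\alpha}]$, and close the cycle of inclusions $\oneman\cap\mathcal P(\kappa),\ \cohman\cap\mathcal P(\kappa)\subseteq N\cap\mathcal P(\kappa)\subseteq R\subseteq \oneman\cap\mathcal P(\kappa),\ \cohman\cap\mathcal P(\kappa)$ using Corollary \ref{omega1col}\ref{omega1colcond2} together with Solovay's Fact \ref{solovayfact} for the middle inclusion, and the fact that small forcing cannot add a fresh subset of $\kappa$ (which is what your approximation-property argument amounts to) for the last. The one place where your write-up would fail as literally stated is the split point for Solovay: $G_{\leq\alpha'}$ and $G_{>\beta}$ are \emph{not} mutually generic when $\alpha'\geq\beta$ (they share coordinates), so you cannot intersect $V[G_{\leq\alpha'}]$ with $L[G_{>\beta}]$. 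The clean resolution, which you circle around but never quite state, is that membership in $N$ gives you $a\in L[G_{>\gamma}]$ for \emph{every} $\gamma<\kappa$; so first apply \ref{omega1colcond2} to a code for $a\cap\beta$ to get $a\cap\beta\in L[G_{\leq\alpha}]$ for some $\alpha$, and then use $a\in L[G_{>\alpha}]$ for that \emph{same} $\alpha$, whence $a\cap\beta\in L[G_{\leq\alpha}]\cap L[G_{>\alpha}]=L$ by mutual genericity of the two complementary blocks. With that adjustment the argument is complete and matches the paper's.
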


\begin{proof}
First, we will prove \ref{kapmanNcond1}. Once again it is enough to show that $N$ is definable in all models of the form $L[G_{>\alpha}]$ for $\alpha<\kappa$. But this is clear from the definition of $N$.\\
Next, we show \ref{kapmanNcond2}. $\oneman\cap\mathcal{P}(\kappa)\subseteq\cohman\cap\mathcal{P}(\kappa)\subseteq N\cap\mathcal{P}(\kappa)$ is trivial. If $a\in N\cap\mathcal{P}(\kappa)$ and $\beta<\kappa$ then $a\cap\beta\in L[G_{\leq\alpha}]$ for some $\alpha$ by clause \ref{omega1colcond2} of Corollary \ref{omega1col}. As $a\in N$, $a\cap\beta\in L[G_{>\alpha}]$, too. Thus by Fact \ref{solovayfact}
$$a\in L[G_{\leq\alpha}]\cap L[G_{>\alpha}]= L.$$
The final inclusion 
$N\cap\mathcal P(\kappa)\subseteq \oneman\cap\mathcal P(\kappa)$
holds since if $W$ is a ground of $L[G]$ which extends to $L[G]$ via $\mathbb Q$ of size $<\kappa$ then $\mathbb Q$ cannot add a fresh subset of $\kappa$.  
\end{proof}
 
 \begin{proof}[Proof of Theorem \ref{onemanfail}.]
 We will show that in $L[G]$, neither $\oneman$ nor $\cohman$ possess a wellorder of its version of $\mathcal{P}(\kappa)$. In fact, we will show that $N$ does not have such a wellorder, which is enough by \ref{kapmanNcond2} of the above proposition. Once again, let $\sim$ be the equivalence relation on functions $f\colon \kappa\rightarrow\kappa\in N$ of eventual coincidence. For $n<\omega$, let 
$$d_n\colon \kappa\rightarrow\kappa,\ d_n(\alpha)=g_\alpha(n)$$
where $g_\alpha$ is the map $\omega\rightarrow\alpha$ induced by the slice of $G$ generic for $\Col(\omega,\alpha)$. As before, we get that the fuzzy sequence $\langle[d_n]_\sim\mid n<\omega\rangle\in N$. If $N$ had a wellorder of $\mathcal P(\kappa)$, then there would be a choice sequence $\langle x_n\mid n<\omega\rangle\in N$ for the fuzzy sequence. In $L[G]$, one can define the sequence $\langle \delta_n\mid n<\omega\rangle$ where $\delta_n$ is the least point after which $x_n$ and $d_n$ coincide. As $\kappa=\omega_1$ in $L[G]$, the $\delta_n$ are bounded uniformly by some $\delta<\kappa$. But this means that $G_{>\delta}\in N$, a contradiction.  
 \end{proof}

It is natural to conjecture that $N=\cohman=\oneman$, though we do not have a proof of any of these equalities. The problem is that we cannot follow the strategy from Section \ref{kappamahlosection}: $L[G]$ has Cohen-grounds which do not contain any $g_\alpha$ for $\alpha<\kappa$, let alone a tail of the sequence $(g_\alpha)_{\alpha<\kappa}$.

\begin{que}
Is $N=\cohman=\oneman$?
\end{que}

\subsection{The successor of a regular uncountable cardinal case}\label{successorcardinalsection}
We show that, again under $V=L$, for every regular uncountable $\kappa$ there is a forcing extension in which $\succman$ is not a model of $\ZFC$. The upside here is that we do not need any large cardinals at all in our construction, however we pay a price: We do not know whether $\succman$ is a model of $\ZF$ in general.

\begin{thm}\label{succthm}
Assume $V=L$ and suppose $\kappa$ is regular uncountable. Then after forcing with 
$$\PP\coloneqq\prod_{\alpha<\kappa^+}^{<\kappa^+-\text{support}}\Add(\kappa, 1)$$
$\succman$ is not a model of $\ZFC$.
\end{thm}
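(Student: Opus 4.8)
The plan is to follow the template established in Sections \ref{kappamahlosection} and \ref{omega1mantlesection}, adapting it to the forcing $\PP=\prod_{\alpha<\kappa^+}^{<\kappa^+\text{-support}}\Add(\kappa,1)$ over $L$. First I would establish the basic preservation facts: since $\PP$ is a ${<}\kappa^+$-support product of ${<}\kappa$-closed forcings, $\PP$ itself is ${<}\kappa$-closed, and since $\GCH$ holds and each factor $\Add(\kappa,1)$ has size $\kappa$, the whole product has size $\kappa^+$; combined with a chain-condition or nice-name argument one checks that $\PP$ is cardinal preserving (in particular $\kappa^+$ is preserved — this is where the regularity of $\kappa$ and $\GCH$ are used, mimicking Lemma \ref{sameinaccessibleslemm}). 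Let $G$ be $\PP$-generic over $L$. For $\alpha<\kappa^+$ write $g_\alpha$ for the $\Add(\kappa,1)$-generic on coordinate $\alpha$, and for $\beta<\kappa^+$ (or rather $\beta<\kappa$ on each coordinate) arrange the generics into a $\kappa^+\times\kappa$ matrix exactly as in the Mahlo case, with rows $r_\alpha$ (coding $g_\alpha$) and columns $c_\beta$.

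Next I would prove the analogue of Lemma \ref{nicedescrip}, namely that $\succman^{L[G]}=\bigcap_{\lambda<\kappa^+}L[G_{\geq\lambda}]$ where $G_{\geq\lambda}$ is the generic for the product restricted to coordinates ${\geq}\lambda$. The key inputs are: (i) every $\kappa^+$-ground $W$ of $L[G]$ satisfies the $\kappa^+$-approximation property (by \cite{fhrstg}); (ii) since $L$ satisfies the Ground Axiom, $\zerosharp$-style covering is available, so any $a\in W$ of size ${\leq}\kappa$ with $a\subseteq\kappa^+\times\kappa$ is covered by some $b\in L$ of size ${\leq}\kappa$; (iii) a density argument showing that for $b\in L$ with $|b|\le\kappa$ meeting each row in a bounded set, the restriction $M\res b$ is essentially a single condition of $\PP\in L\subseteq W$. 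This gives $M\res a\in W$ for all $\kappa^+$-approximations, hence $M_{\geq\lambda}\in W$ once $\lambda$ is large enough that $W$ has the $\kappa^+$-approximation property relative to a $\kappa^+$-ground witnessed inside $V_{\kappa^+}$... actually since all $\kappa^+$-grounds already have the $\kappa^+$-approximation property, one argues directly that $G_{\geq\lambda}\in W$ for a suitable tail. The direction $\bigcap_\lambda L[G_{\geq\lambda}]\subseteq\succman$ uses that a forcing of size ${<}\kappa^+$, i.e. of size ${\le}\kappa$, cannot add a fresh subset of $\kappa^+$ (a $\kappa$-closed-type or ${\le}\kappa$-sized forcing adding a subset of $\kappa^+$ all of whose ${\le}\kappa$-sized initial segments are in the ground is impossible by the approximation property), combined with the observation that $G_{\geq\lambda}$ can be coded by a fresh subset of $\kappa^+$.

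With the description of $\succman^{L[G]}$ in hand, the final step is the diagonalization. Define $d_n$... rather, here the relevant ``fuzzy sequence'' is indexed by $\beta<\kappa$: for each $\beta<\kappa$ the column $c_\beta\colon\kappa^+\to 2$ is a fresh subset of $\kappa^+$ (every initial segment $c_\beta\res\lambda$ is computable from $M_{\geq 0}\res(\lambda\times\{\beta\})$... wait, one needs $c_\beta\res\lambda\in L$: since $\PP$ is a bounded-support product of length $\kappa^+$ and $\Add(\kappa,1)$-generics are determined coordinatewise, each initial segment of $c_\beta$ lies in $L$ by the projection argument as in the Mahlo case). So $c_\beta\in\succman^{L[G]}$. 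Let $\sim$ be eventual coincidence on $({}^{\kappa^+}2)^{\succman}$ and consider the fuzzy sequence $\langle[c_\beta]_\sim\mid\beta<\kappa\rangle$; it lies in $\succman$ because each $L[G_{\geq\lambda}]$ contains the tails $\langle c_\beta\res[\lambda,\kappa^+)\mid\beta<\kappa\rangle$. Suppose $\langle x_\beta\mid\beta<\kappa\rangle\in\succman$ were a choice sequence. In $L[G]$ form $\langle\delta_\beta\mid\beta<\kappa\rangle$ with $\delta_\beta<\kappa^+$ least such that $x_\beta$ and $c_\beta$ agree on $[\delta_\beta,\kappa^+)$; since $\kappa^+$ is regular and the sequence has length $\kappa$, set $\delta^*=\sup_\beta\delta_\beta<\kappa^+$. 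Then for any $\alpha\in[\delta^*,\kappa^+)$ we have $x_\beta(\alpha)=c_\beta(\alpha)=r_\alpha(\beta)$ for all $\beta<\kappa$, so $r_\alpha=\langle x_\beta(\alpha)\mid\beta<\kappa\rangle\in\succman^{L[G]}$ — but $r_\alpha$ codes $g_\alpha$ and is manifestly not fresh (e.g. $g_\alpha\notin L$ and $r_\alpha\res\kappa=r_\alpha\notin L$, contradicting freshness as a subset of $\kappa$, hence it cannot be in $\succman$), a contradiction. Therefore $\mathcal P(\kappa^+)^{\succman}$ has no wellorder in $\succman$ and $\succman\not\models\ZFC$. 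The main obstacle I anticipate is the careful proof of the $\bigcap_\lambda L[G_{\geq\lambda}]\subseteq\succman$ direction and the precise handling of the $\kappa^+$-approximation property together with covering over $L$ — unlike the strong-limit case, $\kappa^+$ is not a limit of inaccessibles, so the argument that a $\kappa^+$-ground is a $\lambda$-ground for small $\lambda$ is not available, and one must instead work directly with the $\kappa^+$-approximation property of every $\kappa^+$-ground and the Ground Axiom in $L$; verifying that no ${\le}\kappa$-sized forcing adds a fresh subset of $\kappa^+$, and that the relevant columns really are fresh, are the delicate points.
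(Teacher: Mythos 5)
Your overall template is right, and your final diagonalization (columns of length $\kappa^+$, the fuzzy sequence of $\sim$-classes, bounding the $\delta_\beta$ below $\kappa^+$ and reading off a row) is essentially the paper's. But there are two genuine gaps. First, you dismiss the preservation of $\kappa^+$ with ``a chain-condition or nice-name argument.'' The forcing $\PP$ does \emph{not} have the $\kappa^+$-c.c.: conditions have supports of size up to $\kappa$, and a $\Delta$-system argument only gives the $\kappa^{++}$-c.c. Preserving $\kappa^+$ is genuinely delicate here --- the paper warns that regularity of $\kappa$ alone does not suffice for such products (the full-support product $\prod_{n<\omega}\Add(\omega,1)$ collapses $2^\omega$) and proves it via a $\diamondsuit_\kappa$-based strategic fusion argument (Lemmas \ref{AAforQ}, \ref{SAxAforPlemm}, \ref{succnocollapse}). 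Moreover, the stronger ``capturing'' statement of Lemma \ref{succnocollapse} --- every $g\colon\kappa\to\Ord$ in $L[G]$ already lies in some $L[G_{\leq\alpha}]$ --- is what, combined with Solovay's mutual-genericity fact, shows that $N=\bigcap_{\alpha<\kappa^+}L[G_{>\alpha}]$ has no new subsets of $\kappa$ and no new bounded subsets of $\kappa^+$; you need this both to compute $\mathcal P(\kappa^+)^N$ and to see that the row $r_{\delta_\ast}$ (a subset of $\kappa$, to which your freshness characterization of $\mathcal P(\kappa^+)$ says nothing) cannot lie in $N$.

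Second, and more seriously, your proposed analogue of Lemma \ref{nicedescrip}, namely $\succman=\bigcap_{\lambda<\kappa^+}L[G_{\geq\lambda}]$, is exactly what breaks in this setting, and the paper does not prove it --- it is posed as an open question at the end of Section \ref{successorcardinalsection}. Your step (iii) assumes the covering set $b\in L$ meets each row in a bounded set, but a $\kappa^+$-ground only gives approximations by sets of size ${\leq}\kappa$, and such a set can contain an entire coordinate $\{\alpha\}\times\kappa$; its $L$-cover then computes the whole generic $g_\alpha\notin L$, so $M\res b$ is not a condition and the density argument collapses. (In the Mahlo case this was saved because the approximating sets had size ${<}\lambda\leq I_\gamma$ for every relevant row $\gamma$.) Since your final step places the fuzzy sequence in $\succman$ only via this unproven identity, the argument as written does not close. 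The repair is the paper's: work with the auxiliary model $N$, prove only $\mathcal P(\kappa^+)^{\succman}=\mathcal P(\kappa^+)^N=\{\text{fresh sets}\}$ (the inclusion of fresh sets into $\succman$ using the $\kappa^+$-approximation property of arbitrary $\kappa^+$-grounds), and observe that a wellorder of $\mathcal P(\kappa^+)^{\succman}$ in $\succman\subseteq N$ already yields, inside $N$, a choice function for the fuzzy sequence, which lives in $N$; the contradiction is then derived in $N$ rather than in $\succman$.
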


First, lets do a warm-up with an initial segment of $\PP$. We thank Elliot Glazer for explaining (the nontrivial part of) the following argument to the author.

\begin{lemm}[Elliot Glazer]\label{AAforQ}
If $\kappa$ is regular and $\diamondsuit_\kappa$ holds then 
$$\PP_{\leq\kappa}=\prod_{\alpha<\kappa}^{\text{full support}}\Add(\kappa, 1)$$
satisfies $\mathrm{AA}(\kappa+1, \kappa^+)$.
\end{lemm}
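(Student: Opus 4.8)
The plan is to establish the three conditions of $\mathrm{AA}(\kappa+1,\kappa^+)$ for $\PP_{\leq\kappa}$ using the natural family of orderings, much as in Lemma~\ref{axamahlocaslemm} but with an extra top level $\alpha=\kappa$ and with $\diamondsuit_\kappa$ doing the work at that level. Concretely, for $\gamma\leq\kappa$ define $r\leq_\gamma q$ iff $r\leq q$ and $r\res\gamma=q\res\gamma$ (where $r\res\kappa$ means $\langle r(\alpha)\mid\alpha<\kappa\rangle$, i.e.\ the whole condition since $\PP_{\leq\kappa}$ is a product of length $\kappa$ — so $\leq_\kappa$ is essentially equality on the ``stem'' up to $\kappa$, which is the whole condition's non-trivial part; one must be slightly careful here and I expect the intended reading is $r\res\gamma$ for $\gamma\leq\kappa$ with $\leq_\kappa$ meaning $r\leq q$ and $r\res\alpha=q\res\alpha$ for all $\alpha<\kappa$). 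Condition~\ref{AAcond1} is immediate. Condition~\ref{AAcond3}, the fusion property, follows because $\PP_{\leq\kappa}$ is a full-support product of $\leq\kappa$-closed forcings: given a $\leq_\alpha$-coherent sequence $\langle p_\alpha\mid\alpha<\beta\rangle$ with $\beta\leq\kappa$, the union $\bigcup_{\alpha<\beta}p_\alpha$ is a condition (full support is what makes this work at $\beta=\kappa$, unlike the ${<}\kappa$-support product in Section~\ref{kappamahlosection}) and is a $\leq_\alpha$-bound for every $\alpha<\beta$.

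The heart of the matter is condition~\ref{AAcond2} at the top level $\alpha=\kappa$: given a maximal antichain $A$ and $p\in\PP_{\leq\kappa}$, we need $q\leq_\kappa p$ (so $q$ has the same coordinates as $p$ on every $\alpha<\kappa$, but may extend $p$'s $\kappa$-th coordinate — wait, there is no $\kappa$-th coordinate; so $q$ is obtained from $p$ by extending within the length-$\kappa$ product only in the ``limit'' sense) meeting fewer than $\kappa^+$ members of $A$, i.e.\ at most $\kappa$ many. The strategy, following the Glazer argument, is to build a $\leq$-decreasing (indeed suitably coherent) sequence $\langle q_\xi\mid\xi<\kappa\rangle$ of conditions, using a $\diamondsuit_\kappa$-sequence $\vec d=\langle d_\xi\mid\xi<\kappa\rangle$ that guesses initial segments of conditions: at stage $\xi$, having formed $q_\xi'=\bigcup_{\eta<\xi}q_\eta$ with $\gamma_\xi=\sup_{\eta<\xi}(\text{relevant support})$, one extends $q_\xi'$ by grafting on, above $\gamma_\xi$, some $a\in A$ compatible with $d_{\gamma_\xi}$ (if one exists), exactly as in the $\mathrm{AA}(\kappa,\kappa)$ lemma for $\PP$ in Section~\ref{omega1mantlesection}. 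Setting $q=\bigcup_{\xi<\kappa}q_\xi$, a $\diamondsuit_\kappa$-reflection argument shows: if $b\in A$ is compatible with $q$, then by the guessing property there is $\xi$ with $\gamma_\xi=\xi$ and $d_\xi=b\res\xi$, so at stage $\xi$ we chose some $a\in A$ compatible with $b\res\xi$ and arranged $q_{\xi+1}\res[\xi,\kappa)\leq a\res[\xi,\kappa)$; since any incompatibility of $a$ and $b$ would have to lie in $[\xi,\kappa)$ (they agree below $\xi$ via $d_\xi$) and $q$ is compatible with $b$, we get $a=b$, so $b$ is among the $\leq\kappa$-many conditions that were explicitly grafted on. That bounds $|\{a\in A\mid a\Vert q\}|\leq\kappa<\kappa^+$.

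For condition~\ref{AAcond2} at levels $\alpha<\kappa$, one can run the same construction confined below some inaccessible-like closure point, or more simply observe that the $\alpha=\kappa$ case plus the closure of the ordering already gives what is needed at lower levels by a shorter version of the same argument (build the fusion sequence starting above $\alpha$), since $\PP_{\leq\kappa}$ above $\alpha$ is again a full-support product of the same type and $\diamondsuit_\kappa$ is still available. I expect the main obstacle to be bookkeeping: making precise what ``$r\res\kappa$'' and the top ordering $\leq_\kappa$ mean for a length-$\kappa$ product (so that $\leq_\kappa$ is non-trivial yet still admits the grafting operation), and verifying carefully that the grafted conditions $q_\xi$ really are $\leq_\kappa$-above $p$ and that their union is a genuine condition — this is exactly where full support, as opposed to ${<}\kappa$-support, is essential, and it is worth flagging explicitly. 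The $\diamondsuit_\kappa$ combinatorics themselves are by now a standard move (it is the same device used in the $\mathrm{AA}(\kappa,\kappa)$ proof for the Lévy-collapse-style $\PP$ earlier in this section), so once the product-theoretic setup is pinned down the proof should go through routinely.
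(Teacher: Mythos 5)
Your proposal is correct and follows essentially the same route as the paper: the orderings $\leq_\gamma$ given by agreement on the first $\gamma$ coordinates, fusion via full support, and a $\diamondsuit_\kappa$-guided length-$\kappa$ fusion that grafts guessed antichain elements onto the tail, so that every $a\in A$ compatible with the resulting condition must have been explicitly grafted on at some stage, giving the bound $\kappa$. The one point you flag as the main obstacle --- the meaning of $\leq_\kappa$ --- dissolves: the real work is \ref{AAcond2} for $\alpha<\kappa$ (which is exactly what your length-$\kappa$ fusion starting from $p$ delivers), while $\leq_\kappa$ may as well be trivial since only the fusion clause \ref{AAcond3} for sequences of length $\kappa$ is ever used; likewise the support bookkeeping you import from the $\mathrm{AA}(\kappa,\kappa)$ lemma of Section \ref{omega1mantlesection} is unnecessary here, because the product has full support of length exactly $\kappa$, so at stage $\beta$ one simply grafts $d_\beta$ onto the coordinates below $\beta$ and extends on $[\beta,\kappa)$.
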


An additional assumption beyond ``$\kappa$ is regular" is necessary here: It is well known that 
$$\prod_{n<\omega}^{\text{full support}}\Add(\omega, 1)$$
collapses $2^\omega$ to $\omega$.
\begin{proof}
We let $p\leq_\alpha q$ if $p\leq q$ and $p\res\alpha=q\res\alpha$. It is easy to see that \ref{AAcond1} and \ref{AAcond3} of Definition \ref{generalaa} hold, so let us show \ref{AAcond2}. Therefore, let $\alpha<\kappa$, $p\in\PP_{\leq\kappa}$ and an antichain $A$ in $\PP_{\leq\kappa}$ be given. As $\diamondsuit_\kappa$ holds, there is a sequence $\langle d_\beta\mid \beta<\kappa\rangle$ with $d_\beta\in\PP_{\leq\beta}$ so that for any $q\in\PP_{\leq\kappa}$ there is some $\beta$ with $q\res\beta=d_\beta$. We will define a sequence $(p_\beta)_{\alpha\leq\beta\leq\kappa}$ inductively so that $p_{\gamma}\leq_\beta p_\beta$ for all $\beta\leq\gamma\leq\kappa$. We put $p_\alpha=p$. At limit stages $\beta$ we let $p_\beta$ be the canonical fusion of $\langle p_\gamma\mid\alpha\leq\gamma<\beta\rangle$. So assume $p_\beta$ is defined. We choose $p_{\beta+1}\leq_\beta p_\beta$ so that, if possible,
$$d_\beta^\frown p_{\beta+1}\leq a$$
for some $a\in A$. Otherwise, we are lazy and set $p_{\beta+1}=p_\beta$.\\ 
Now clearly $q\coloneqq p_\kappa\leq_\alpha p$ and we will show that $q$ is compatible with at most $\kappa$-many conditions in $A$. To see this, suppose $a\in A$ is compatible with $q$. We may find $\beta<\kappa$ so that $d_\beta=a\res\beta$. In the construction of $p_{\beta+1}$ from $p_\beta$, we tried to achieve that $$d_\beta^\frown p_{\beta+1}\res[\beta, \kappa)$$
is below some condition in $A$, which is possible and only possible for $a$. This shows that for any $a\in A$ that is compatible with $q$, there is $\beta<\kappa$ so that $q\res[\beta, \kappa)\leq a\res[\beta,\kappa)$. As $\PP_{\leq\beta}$ has size ${\leq}\kappa$, it follows that there are at most $\kappa$-many such $a\in A$. 
\end{proof}

\begin{cor}
Under the same assumptions as before, $\PP_{\leq\kappa}$ preserves all cardinals $\leq\kappa^+$.
\end{cor}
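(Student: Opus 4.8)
The plan is to handle the cardinals $\leq\kappa$ and the cardinal $\kappa^+$ by two different properties of $\PP_{\leq\kappa}$, both already in hand.

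First, the cardinals $\leq\kappa$ are preserved by closure. The forcing $\PP_{\leq\kappa}=\prod_{\alpha<\kappa}^{\text{full support}}\Add(\kappa,1)$ is a full support product of ${<}\kappa$-closed forcings (each $\Add(\kappa,1)^L$ is ${<}\kappa$-closed because $\kappa$ is regular), and a full support product of ${<}\kappa$-closed forcings is again ${<}\kappa$-closed: a descending sequence of conditions of length ${<}\kappa$ admits a coordinatewise lower bound, which is a legitimate condition since the product has full support. Hence $\PP_{\leq\kappa}$ adds no new ${<}\kappa$-sequences of ordinals and preserves all cardinals (and cofinalities) $\leq\kappa$; in particular $\kappa$ remains a cardinal.

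Second, $\kappa^+$ is preserved by the Axiom A property. By Lemma \ref{AAforQ}, $\PP_{\leq\kappa}$ satisfies $\mathrm{AA}(\kappa+1,\kappa^+)$, and $\kappa+1<\kappa^+$ since $\kappa$ is infinite, so Proposition \ref{axaprop} applies with $\gamma=\kappa+1$ and $\lambda=\kappa^+$: in $V^{\PP_{\leq\kappa}}$ there is no surjection from any ordinal $\beta<\kappa+1$ onto $\kappa^+$. Taking $\beta=\kappa$, there is no surjection from $\kappa$ onto $\kappa^+$, so $\kappa^+$ is not collapsed to $\kappa$; together with the preservation of all cardinals $\leq\kappa$ this shows $\kappa^+$ stays a cardinal.

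I do not anticipate a genuine difficulty; the corollary is essentially bookkeeping on top of Lemma \ref{AAforQ} and Proposition \ref{axaprop}. The two points that merit a sentence of care are (a) that the length-$\kappa$ full support product of ${<}\kappa$-closed forcings is ${<}\kappa$-closed, which is what secures the cardinals strictly below $\kappa$, and (b) lining up the indices in Proposition \ref{axaprop}, i.e.\ noting that ``$\beta<\gamma$'' with $\gamma=\kappa+1$ includes $\beta=\kappa$, which is exactly the instance needed to block a collapse of $\kappa^+$ onto $\kappa$.
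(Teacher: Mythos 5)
Your proof is correct and follows exactly the route the paper takes: the paper's own proof is the one-line observation that $\PP_{\leq\kappa}$ is ${<}\kappa$-closed (handling cardinals $\leq\kappa$) and satisfies $\mathrm{AA}(\kappa+1,\kappa^+)$ (handling $\kappa^+$ via Proposition \ref{axaprop}). You have merely filled in the bookkeeping that the paper leaves implicit.
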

\begin{proof}
$\PP_{\leq\kappa}$ is ${<}\kappa$-closed and satisfies $\mathrm{AA}(\kappa+1, \kappa^+)$.
\end{proof}

We aim to prove a similar result for $\PP$.
\begin{lemm}\label{succnocollapse}
If $\kappa$ is regular and $\diamondsuit_\kappa$ holds then $\PP$ preserves all cardinals $\leq\kappa^+$. Moreover, if $G$ is $\PP$-generic and $g\colon \kappa\rightarrow\Ord$ is in $V[G]$ then there is $\alpha<\kappa^+$ with $g\in V[G_{\leq\alpha}]$.
\end{lemm}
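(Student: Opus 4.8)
The plan is to reduce both assertions to a single statement, the ``moreover''. First, $\PP$ is $<\kappa$-closed: as $\kappa$ is regular, a $\leq$-descending sequence of length $<\kappa$ has supports with union of size $<\kappa$, and on each coordinate it is a descending sequence of fewer than $\kappa$ conditions of $\Add(\kappa,1)$, whose union again has size $<\kappa$. Hence all cardinals $\leq\kappa$ are preserved, and (since $\kappa$ remains a cardinal) it suffices to show that $\PP$ adds no surjection from $\kappa$ onto $(\kappa^+)^V$. I claim this follows from the ``moreover'': if $g\colon\kappa\to(\kappa^+)^V$ were such a surjection in $V[G]$, then $g\in V[G_{\leq\alpha}]$ for some $\alpha<\kappa^+$, where the bound $\alpha<\kappa^+$ will come purely from the regularity of $\kappa^+$ in $V$ (so there is no circularity); but for $\alpha<\kappa^+$ the forcing $\PP_{\leq\alpha}$ is isomorphic to a full-support product of at most $\kappa$ copies of $\Add(\kappa,1)$, hence --- by Lemma \ref{AAforQ} together with Proposition \ref{axaprop}, or trivially if there are fewer than $\kappa$ copies --- it adds no surjection from $\kappa$ onto $(\kappa^+)^V$, a contradiction.

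For the ``moreover'', fix a $\PP$-name $\dot g$ and $p\Vdash\dot g\colon\check\kappa\to\Ord$; it is enough to prove that the set of $q\leq p$ forcing ``$\dot g\in V[\dot G_{\leq\check\alpha}]$'' for some $\alpha<\kappa^+$ is dense below $p$, since then a generic such $q$ and $\alpha$ witness $g\in V[G_{\leq\alpha}]$. To build such a $q$ one performs a fusion of length $\kappa$: one constructs a $\leq$-descending sequence $\langle q_i\mid i<\kappa\rangle$ with $q_0\leq p$ and an increasing sequence of coordinate sets $\langle U_i\mid i<\kappa\rangle$, each of size $\leq\kappa$ and with $\supp(q_i)\subseteq U_i$, so that at stage $i$ the passage $q_i\mapsto q_{i+1}$ ``localizes'' the decision of $\dot g(\check i)$: there is a maximal antichain $A_i$ below $q_{i+1}$ deciding $\dot g(\check i)$ with $|A_i|\leq\kappa$, and one puts $\bigcup_{a\in A_i}\supp(a)\cup\supp(q_{i+1})\subseteq U_{i+1}$. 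At limit stages below $\kappa$ one uses $<\kappa$-closure to obtain a lower bound; at stage $\kappa$ one takes the fusion $q_\kappa$. Then $U\coloneqq\bigcup_{i<\kappa}U_i$ has size $\leq\kappa$, so there is $\alpha<\kappa^+$ with $U\subseteq\alpha$ (regularity of $\kappa^+$ in $V$), and below $q_\kappa$ the value $\dot g(\check i)$ is decided, for each $i$, by the antichain $A_i$, all of which live on coordinates in $U\subseteq\alpha$; hence $q_\kappa\Vdash\dot g\in V[\dot G_{\leq\check\alpha}]$.

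The main obstacle is making this fusion legitimate, and it has two interlocking parts. The localization step at stage $i$ --- thinning a maximal antichain deciding $\dot g(\check i)$ to size $\leq\kappa$ by a single extension --- is the analogue of clause \ref{AAcond2} of Definition \ref{generalaa} with parameter $\kappa^+$, and is not automatic: $\PP$ is not $\kappa^+$-c.c. (the $2^\kappa=\kappa^+$ conditions $\xi\mapsto\{(0,f(\xi))\}$ for $f\in{}^{\kappa}2$ already form a maximal antichain in $\PP_{\leq\kappa}$), so this is precisely where $\diamondsuit_\kappa$ (and with it $2^{<\kappa}=\kappa$) must enter, in the spirit of Glazer's proof of Lemma \ref{AAforQ}. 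Secondly, the length-$\kappa$ fusion $q_\kappa$ must genuinely be a condition of $\PP$; since each $\Add(\kappa,1)$-coordinate is only $<\kappa$-closed, one must ensure that no single coordinate is modified $\kappa$-many times along $\langle q_i\rangle$. The naive order ``$q'\leq_\alpha q$ iff $q'\leq q$ and $q'\res\alpha=q\res\alpha$'' does not achieve this, because the index set $\kappa^+$ of $\PP$ is longer than the length $\kappa$ of the fusion and coordinates $\geq\kappa$ would never be frozen; the remedy is to drive the construction by a book-keeping function $F\colon\kappa^+\to\kappa$, setting $q'\leq_\alpha q$ iff $q'\leq q$, $q'$ agrees with $q$ on every coordinate $\xi$ with $F(\xi)<\alpha$ and introduces no new such coordinates, and demanding $q_{i+1}\leq_i q_i$; then every coordinate $\xi$ is frozen after stage $F(\xi)<\kappa$, hence touched fewer than $\kappa$ times, and $q_\kappa$ exists. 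The real work, therefore, is to show that the localization step can still be carried out under the constraint $q_{i+1}\leq_i q_i$ (i.e. modifying only coordinates with $F$-value $\geq i$) --- in other words, that $\PP$ satisfies the appropriate Axiom-A variant for these $F$-keyed orders --- and this is where one reduces, via the product decomposition of $\PP$ into $\kappa$-sized blocks, to the $\diamondsuit_\kappa$-driven thinning already performed for $\PP_{\leq\kappa}$ in Lemma \ref{AAforQ}.
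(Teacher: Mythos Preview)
Your overall strategy is the paper's: ${<}\kappa$-closure handles cardinals $\leq\kappa$, and a length-$\kappa$ fusion together with $\diamondsuit_\kappa$-driven antichain thinning handles $\kappa^+$ and the ``moreover''. Deriving $\kappa^+$-preservation from the ``moreover'' via Lemma~\ref{AAforQ} is a legitimate reordering; the paper instead first isolates an abstract fusion property of $\PP$ and reads off both conclusions, getting the ``moreover'' exactly as in Corollary~\ref{omega1col}.

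The gap is the fixed bookkeeping. With a \emph{fixed} $F\colon\kappa^+\to\kappa$, the thinning clause fails outright for your orders $\leq_i$: some fiber $F^{-1}\{j\}$ has size $\kappa^+$, so there is $p$ with $\supp(p)\subseteq F^{-1}\{j\}$ of size $\kappa$; any $q\leq_{j+1}p$ agrees with $p$ on all of $\supp(p)$, and an antichain of size $2^\kappa=\kappa^+$ living entirely on $\supp(p)$ (e.g.\ all ways of deciding one further bit at each coordinate) shows no such $q$ is compatible with only $\kappa$-many elements. Even if you tune $F$ to the starting condition, the thinning at stage $0$ introduces $\kappa$-many new coordinates dictated by the antichain $A_0$, not by $F$, and nothing prevents them from accumulating in a single fiber, so that at stage $1$ the frozen part of $\supp(q_1)$ already has size $\kappa$. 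The $\diamondsuit_\kappa$-count of Lemma~\ref{AAforQ} needs the frozen part to have size ${<}\kappa$ so that only $2^{{<}\kappa}=\kappa$ conditions live on it; your ``$\kappa$-sized blocks'' cannot arrange this, since no $F\colon\kappa^+\to\kappa$ has all fibers of size $\leq\kappa$. The paper's remedy is precisely to make the freezing \emph{adaptive}: it indexes the suborders by sets $x\in\mathcal P_\kappa(\kappa^+)$ rather than ordinals and packages the fusion as a game (``Strategic Axiom A'') in which a second player, seeing the supports produced so far, chooses the next ${<}\kappa$-sized set of coordinates to freeze. A winning strategy for this player guarantees the length-$\kappa$ fusion exists while keeping every frozen set of size ${<}\kappa$, so the $\diamondsuit_\kappa$-thinning goes through. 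The bookkeeping has to react to the construction; it cannot be scheduled in advance.
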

The argument is similar, but somewhat more complicated. To do so, we introduce a further abstraction of $\AxA(\kappa,\lambda)$.

\begin{defn}
Suppose that $\mathcal P=(P, \preceq)$ is a partial order, $\QQ$ is a forcing, $\kappa<\lambda$ are ordinals. $\QQ$ satisfies \textit{Strategic Axiom A}$(\kappa,\lambda,\mathcal P)$ ($\SAxA(\kappa,\lambda, \mathcal P)$)\index{Strategic Axiom A@Strategic Axiom A$(\kappa,\lambda, \mathcal P)$} if there is a family $\langle \leq_x\mid x\in P\rangle$ of partial orders on $\QQ$ so that
\begin{enumerate}[label=$(\mathrm{SAA}.\roman*)$]
\item\label{saxacond1} $\leq_y\subseteq\leq_x\subseteq\leq_\QQ$ whenever $x\preceq y$ for $x, y\in P$,
\item\label{saxacond2} for any antichain $A\subseteq\QQ$, any $x\in P$ and $p\in\QQ$, there is $q\leq_x p$ with 
$$\vert\{a\in A\mid a\Vert p\}\vert<\lambda$$
and 
\item\label{saxacond3} player II has a winning strategy in the following game we call $\mathcal G(\QQ,\kappa,\mathcal P)$:

\begin{tabular}{c|c|c|c|c|c|c|c|c}
   I  &  $p_0$ & & $p_1$ & & $\dots$ & $p_{\omega}$ & & $\dots$\\
   \hline
   II  & & $x_0$ & & $x_1$ & $\dots$ & & $x_\omega$ & $\dots$
\end{tabular}

The game has length $\kappa$. In an even round $\alpha\cdot 2$,  Player I plays some condition $p_\alpha\in\QQ$ so that $p_\alpha\leq_{x_\beta} p_\beta$ for all $\beta<\alpha$ played so far. In an odd round $\alpha\cdot 2 + 1$, player II plays some $x_\alpha\in P$ with $x_\beta\preceq x_\alpha$ for all $\beta<\alpha$.\\
Player I wins the game iff some player has no legal moves in some round ${<}\kappa$. If the game last all $\kappa$ rounds instead, II wins.
\end{enumerate}
\end{defn}

It is straightforward to generalize Proposition \ref{axaprop}.

\begin{prop}\label{saxaprop}
Suppose $\QQ$ satisfies $\SAxA(\kappa,\lambda, \mathcal P)$. Then in $V^\QQ$, there is no surjection $f\colon \beta\rightarrow\lambda$ for any $\beta<\kappa$.
\end{prop}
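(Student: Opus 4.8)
The plan is to adapt the standard argument that an Axiom A forcing preserves $\omega_1$ — which appears as Proposition \ref{axaprop} in the excerpt — to the strategic setting, using the winning strategy for player II as a bookkeeping device to survive the $\kappa$ many rounds needed. So suppose toward a contradiction that some $p\in\QQ$ and some $\beta<\kappa$ force $\dot f\colon\check\beta\to\check\lambda$ to be a surjection. I would build a descending sequence of conditions $\langle p_\xi\mid\xi<\beta\rangle$ (really indexed so as to interleave with the game), together with a run of the game $\mathcal G(\QQ,\kappa,\mathcal P)$ in which player II follows her winning strategy, so that $p_\xi$ decides $\dot f(\check\xi)$ — say $p_\xi\forces\dot f(\check\xi)=\check\gamma_\xi$ — and so that consecutive conditions are related by the appropriate $\leq_{x}$ from the game. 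Since $\beta<\kappa$, the game will not have ended by stage $\beta$, so player II's moves $x_\xi$ are always legal and available; this is exactly what the strategic hypothesis buys us over plain closure.

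The key steps, in order: first, at successor stages, given $p_\xi$ and the next move $x_\xi$ returned by II's strategy, use that $p_\xi$ is a condition to find $p_{\xi+1}\leq_{x_\xi}p_\xi$ deciding $\dot f(\check\xi)$; the existence of such a refinement deciding a statement follows from \ref{saxacond2} applied to a maximal antichain of conditions deciding $\dot f(\check\xi)$, exactly as in the usual Axiom A argument (one gets a condition compatible with $<\lambda$ of them, hence — if $\lambda$ is a cardinal and by genericity — below one of them; more simply, $\leq_{x_\xi}$-density of the set of deciding conditions follows from \ref{saxacond2} together with the fact that deciding conditions are dense). At limit stages $\eta<\beta$, I feed the sequence $\langle p_\xi\mid\xi<\eta\rangle$ to player I as a legal move (it is legal precisely because $p_{\xi'}\leq_{x_\xi}p_\xi$ for $\xi<\xi'$, by \ref{saxacond1} and the construction) and let the strategy respond; the resulting position gives a condition $p_\eta$ below all earlier $p_\xi$. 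After $\beta$ steps, let $q=p_\beta$ (obtained the same way, as a legal move for I at a limit or by taking the position after round $\beta$). Then $q\leq p$ and $q$ decides $\dot f(\check\xi)=\check\gamma_\xi$ for every $\xi<\beta$, so $q\forces\ran(\dot f)\subseteq\{\check\gamma_\xi\mid\xi<\beta\}$. But $|\{\gamma_\xi\mid\xi<\beta\}|\leq|\beta|<\lambda$, contradicting that $q$ forces $\dot f$ to be onto $\lambda$ (using regularity of $\lambda$, or just $|\beta|<\lambda$ as cardinals).

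The main obstacle, and the point to be careful about, is the precise synchronization of the game rounds with the construction: player I moves at even rounds and II at odd rounds, and a single "step" $\xi\mapsto\xi+1$ of deciding a new value of $\dot f$ must correspond to one full I-then-II exchange, while limit rounds of the game must be handled so that I always has a legal move (the proposed descending sequence must genuinely be $\leq_{x}$-coherent for every relevant $x$). The clean way is to note that the sequence $\langle p_\xi,x_\xi\mid\xi<\beta\rangle$ together with the fusion-like conditions produced at limits is exactly a partial play of $\mathcal G(\QQ,\kappa,\mathcal P)$ of length $<\kappa$ consistent with II's winning strategy, and then observe that in any such partial play I has always had a legal move so far (else I would already have won, but a winning strategy for II means no such short play exists) — hence II's strategy keeps providing the $x_\xi$, and at the end we extract $q$. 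Beyond this bookkeeping the argument is a routine transcription of Proposition \ref{axaprop}, as the excerpt itself remarks.
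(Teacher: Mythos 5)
Your overall architecture is the intended one (the paper leaves this proof to the reader as a transcription of Proposition \ref{axaprop}): run the game $\mathcal G(\QQ,\kappa,\mathcal P)$ with player II following her winning strategy, let player I's moves be the conditions of your descending sequence, use \ref{saxacond1} together with the $\preceq$-increase of II's moves to see that each proposed move of I is legal, and use the fact that II's strategy forces the game to survive $\beta<\kappa$ rounds to obtain the fusions at limits and the final condition $q$. That bookkeeping is correct and is exactly what the strategic formulation is for.

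The successor step, however, contains a genuine error. You claim to find $p_{\xi+1}\leq_{x_\xi}p_\xi$ \emph{deciding} $\dot f(\check\xi)$, and both justifications you offer for this fail. Clause \ref{saxacond2} produces a $q\leq_x p$ compatible with fewer than $\lambda$ many elements of a maximal antichain $A_\xi$ of deciding conditions; such a $q$ need not lie below \emph{any} element of $A_\xi$ (being compatible with few members of an antichain is very far from extending one of them), and ``genericity'' is not available since the construction takes place in $V$. Likewise, $\leq_x$-density of the deciding conditions simply does not follow from \ref{saxacond2} plus ordinary density, and it fails for typical Axiom A forcings: for Sacks forcing one cannot decide the value of the generic real at the first splitting node while remaining $\leq_1$-below the given condition. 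This is precisely why the Axiom A argument is phrased in terms of thinning antichains rather than deciding values. The repair is standard: let $B_\xi$ be the set of values forced by the fewer than $\lambda$ many elements of $A_\xi$ compatible with $p_{\xi+1}$; since $A_\xi$ is maximal and $q\leq p_{\xi+1}$, any extension of $q$ deciding $\dot f(\check\xi)$ is compatible with one of those elements, so $q\forces\dot f(\check\xi)\in\check B_\xi$ and hence $q\forces\ran(\dot f)\subseteq\bigcup_{\xi<\beta}\check B_\xi$. The final count now requires $\bigl\vert\bigcup_{\xi<\beta}B_\xi\bigr\vert<\lambda$, so the regularity of $\lambda$ (or at least $\cof(\lambda)>\vert\beta\vert$), which you mention only as an optional aside, becomes essential; it does hold in the paper's application, where $\lambda=\kappa^+$.
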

\qed

\begin{lemm}\label{SAxAforPlemm}
If $\kappa$ is regular and $\diamondsuit_\kappa$ holds then $\PP$ satisfies 
$$\SAxA(\kappa+1,\kappa^+, \mathcal P_\kappa(\kappa^+))$$
where $\mathcal P_\kappa(\kappa^+)$ is ordered by inclusion.
\end{lemm}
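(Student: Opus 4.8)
The plan is to mimic the proof of Lemma \ref{AAforQ} (Elliot Glazer's argument) but now iterating the "try to meet an element of the antichain" bookkeeping across all $\kappa^+$ coordinates of $\PP$, using the poset $\mathcal P_\kappa(\kappa^+)$ to keep track of which coordinates have been "frozen" at each stage of a play of the game $\mathcal G(\PP,\kappa+1,\mathcal P_\kappa(\kappa^+))$. Concretely, for $x\in\mathcal P_\kappa(\kappa^+)$, i.e. $x\subseteq\kappa^+$ with $|x|<\kappa$, define $p\leq_x q$ iff $p\leq q$ and $p\res x=q\res x$ (note: restriction to the coordinate set $x$, not to an initial segment, which is the key difference from the earlier lemmas — this is what lets us handle the full length-$\kappa^+$ product). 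Clause \ref{saxacond1} is immediate since $x\subseteq y$ gives $\leq_y\subseteq\leq_x$. For \ref{saxacond2}, given an antichain $A$, a condition $p$, and $x\in\mathcal P_\kappa(\kappa^+)$, I would run essentially the construction of Lemma \ref{AAforQ}: since $|x|<\kappa$ and $\kappa$ is regular, $\sup\supp(p)\cup\sup(x)<\kappa^+$ is not automatic, but we only need to freeze coordinates in $x$; build a $\leq_x$-decreasing sequence of length $\le\kappa$ (using $\diamondsuit$ on a suitable regular $\lambda$, or on $\kappa$, to guess initial segments/restrictions of potential members of $A$) so that the fusion $q$ meets "all the elements of $A$ it could possibly meet", and argue as before that for each $a\in A$ compatible with $q$ there is some bounded piece of the domain on which $q$ already sits below $a$, and count: there are at most $\kappa^{<\kappa}\cdot\kappa^+\leq\kappa^+$ (under $\GCH$) such $a$ — here we need $\diamondsuit_\kappa$ exactly to make the guessing argument bound this by $\kappa^+$ rather than $2^\kappa$.

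The substance of the lemma is clause \ref{saxacond3}: player II has a winning strategy in $\mathcal G(\PP,\kappa+1,\mathcal P_\kappa(\kappa^+))$. Here the game has length $\kappa+1$ and II must respond to each condition $p_\alpha$ played by I with an $x_\alpha\in\mathcal P_\kappa(\kappa^+)$, the $x_\alpha$ increasing under $\subseteq$, such that (a) II always has a legal move and (b) after $\kappa$ rounds, when I needs to play $p_\kappa\leq_{x_\beta}p_\beta$ for all $\beta<\kappa$, such a $p_\kappa$ actually exists (otherwise I wins at round $\kappa$, which II must prevent). The natural strategy is: at stage $\alpha$, let II play $x_\alpha=\bigcup_{\beta<\alpha}x_\beta\cup\supp(p_\alpha)$ — but $\supp(p_\alpha)$ may have size $\kappa$, so this is illegal. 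Instead II should play $x_\alpha=\bigcup_{\beta<\alpha}x_\beta\cup(\supp(p_\alpha)\cap\eta_\alpha)$ for a cleverly chosen ordinal $\eta_\alpha<\kappa^+$, or more precisely II enumerates, using a bookkeeping function fixed in advance, the coordinates that have appeared in supports so far and adds the $\alpha$-th one; the point is that after $\kappa$ rounds $x_\kappa:=\bigcup_{\beta<\kappa}x_\beta$ has size $\le\kappa$, and on the coordinates in $x_\kappa$ all the $p_\beta$ agree (since each $p_{\beta+1}\leq_{x_\beta}p_\beta$ and $x_\beta$ was frozen), while $\bigcup_{\beta<\kappa}\supp(p_\beta)\setminus x_\kappa$ — the "unfrozen" coordinates — must be handled by noting that on each such coordinate $\gamma$, the sequence $\langle p_\beta(\gamma)\mid\beta<\kappa\rangle$ is a descending sequence of conditions in $\Add(\kappa,1)$ whose union is a condition since $\kappa$ is regular and $\Add(\kappa,1)$ is ${<}\kappa$-closed (and each coordinate only "wakes up" at some stage $<\kappa$ after which it's frozen). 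So $p_\kappa:=\bigcup_{\beta<\kappa}p_\beta$ is well-defined and is a legal move for I, so II wins. I would need to be careful that the bookkeeping is set up so that every coordinate appearing in any $\supp(p_\beta)$ is frozen by some later stage $<\kappa$ — this uses $\cof(\kappa)=\kappa>\omega$ and that each $\supp(p_\beta)$, together with $\bigcup_{\gamma<\beta}\supp(p_\gamma)$, is a set that II can "catch up with" within its allotted moves, which works because II gets $\kappa$ many moves and can diagonalize.

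The main obstacle is exactly the tension in clause \ref{saxacond3} between II's legality constraint ($|x_\alpha|<\kappa$ always) and the winning condition ($p_\kappa$ must exist, which needs \emph{all} coordinates ever touched to eventually be frozen, and there could be up to $\kappa$ coordinates touched cofinally in a length-$\kappa$ play). Resolving it requires observing that a coordinate $\gamma$ that is \emph{not} eventually frozen still causes no harm: the constraint $p_{\beta+1}\leq_{x_\beta}p_\beta$ does not pin down $p_{\beta+1}(\gamma)$ when $\gamma\notin x_\beta$, but I's own moves still form a $\leq_\PP$-descending chain (since $\leq_{x_\beta}\subseteq\leq_\PP$), so $\langle p_\beta(\gamma)\mid\beta<\kappa\rangle$ is $\leq$-descending in $\Add(\kappa,1)$ and has a lower bound by ${<}\kappa$-closure. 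Hence $p_\kappa=\bigcup_\beta p_\beta$ works regardless — the freezing is only needed coordinate-wise to make sense of "the value stabilizes", and ${<}\kappa$-closure of each factor covers the rest. Once this is seen, II's strategy can be the trivial one of playing $x_\alpha=\bigcup_{\beta<\alpha}x_\beta$ together with one new coordinate from a fixed enumeration, and the verification that I always has a legal move at every stage $\le\kappa$ is routine by the same ${<}\kappa$-closure argument applied at every limit. I then conclude by citing Proposition \ref{saxaprop} (with $\beta=\kappa$, $\lambda=\kappa^+$) for the no-surjection claim, and deduce cardinal preservation and the factorization statement of Lemma \ref{succnocollapse} by the now-standard argument (a name for $g\colon\kappa\to\Ord$ can be read off a descending $\kappa$-sequence of conditions whose supports, being of size $<\kappa^+$ each and $\kappa$-many, union to a set of size $\le\kappa<\kappa^+$, hence live in some $\PP_{\leq\alpha}$).
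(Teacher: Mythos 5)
Your setup (the orderings $\leq_x$ obtained by freezing the coordinates in $x$, plus a bookkeeping device that enumerates the supports encountered during the play) matches the paper's, but the way you resolve what you yourself identify as the main tension in clause \ref{saxacond3} is wrong. You claim that a coordinate $\gamma$ which is touched during the play but never frozen ``causes no harm'' because $\langle p_\beta(\gamma)\mid\beta<\kappa\rangle$ is descending in $\Add(\kappa,1)$ and has a lower bound by ${<}\kappa$-closure. At the final round this sequence has length $\kappa$, not ${<}\kappa$, and a descending $\kappa$-sequence in the ${<}\kappa$-closed but not $\kappa$-closed forcing $\Add(\kappa,1)$ need not have a lower bound: player I can simply arrange $\beta\subseteq\dom(p_\beta(\gamma))$ at each stage, so that $\bigcup_{\beta<\kappa}p_\beta(\gamma)$ is a total function on $\kappa$ and hence not a condition, leaving I with no legal move at round $\kappa$ --- and by the rules of the game that means I \emph{wins}. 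The ${<}\kappa$-closure argument is valid only at the intermediate limit rounds $\alpha<\kappa$. So the ``trivial'' strategy you settle on at the end fails, and the bookkeeping you describe in the middle (surjections $s_\beta\colon\kappa\to\supp(p_\beta)$ composed with a pairing $h\colon\kappa\to\kappa\times\kappa$ satisfying $h(\beta)=(\alpha,\gamma)\Rightarrow\alpha\leq\beta$, so that \emph{every} coordinate ever appearing in a support enters some $x_\delta$ with $\delta<\kappa$) is genuinely necessary; that is exactly the strategy the paper gives for II.

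Your treatment of clause \ref{saxacond2} is also too thin. The required bound is ${<}\kappa^+$, i.e.\ at most $\kappa$ many elements of $A$ compatible with $q$, whereas your count $\kappa^{<\kappa}\cdot\kappa^+\leq\kappa^+$ only yields $\leq\kappa^+$; the correct count ranges over the $\kappa$ rounds of the game and the $\leq\kappa$ many conditions supported on the frozen set $x_{<\alpha}$, not over $\kappa^+$ many possible bounds. Moreover the construction cannot simply be ``run as in Lemma \ref{AAforQ}'' along one decreasing sequence: for the length-$\kappa$ fusion to exist the supports must be controlled, so the paper has player I follow an auxiliary strategy $\sigma$ (trying at stage $\alpha$ to get $p_\alpha\res(\kappa^+\setminus x_{<\alpha})\cup e_\alpha\res x_{<\alpha}$ below a member of $A$) \emph{against} II's winning strategy from clause \ref{saxacond3}; and the $\diamondsuit_\kappa$-sequence, which lives on $\kappa$, must be transported to the size-$\leq\kappa$ but scattered set of frozen coordinates via the bookkeeping surjections before it can guess $a\res x_{<\alpha}$. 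Working out this relabelling is the actual content of the lemma and is absent from your sketch.
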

\begin{proof}

For $x\in\mathcal P_\kappa(\kappa^+)$ we will write $p\leq_x q$ if $p\leq q$ and $p\res x= q\res x$. It is clear that \ref{saxacond1} holds.\\
Next, we aim to establish \ref{saxacond3}. We describe a strategy for player II in the relevant game.  We will need to do some additional bookkeeping. Let 
$$h\colon \kappa\rightarrow \kappa\times\kappa$$
be a surjection such that if $h(\beta)=(\alpha, \gamma)$ then $\alpha\leq\beta$. Suppose that $p_\alpha$ is the last condition played by player I and $(x_\beta)_{\beta<\alpha}$ have been played already. In the background, we already have chosen some surjections $s_\beta\colon\kappa\rightarrow\supp(p_\beta)$ for $\beta<\alpha$ and we will adjoin a surjection $s_\alpha:\kappa\rightarrow \supp(p_\alpha)$ to that list. We set 
$$x_\alpha={s_{\gamma_0}(\gamma_1)}\cup\bigcup_{\beta<\alpha} x_\beta$$
where $(\gamma_0,\gamma_1)=h(\alpha)$. As $\kappa$ is regular, $x_\alpha\in \mathcal P_\kappa(\kappa^+)$.

\begin{claim}
Player I does not run out of moves before the game ends.
\end{claim}

\begin{proof}
Suppose we reached round $2\cdot\alpha\leq\kappa$ and let $x=\bigcup_{\beta<\alpha} x_\beta$. We will find a legal play $p_\ast$ for player I. For $\gamma\in\kappa^+\setminus\bigcup_{\beta<\alpha} \supp(p_\beta)$, let $p_\ast(\gamma)$ be trivial. The point is that for $\gamma\in x$, $\langle p_\beta(\gamma)\mid\beta<\alpha\rangle$ stabilizes eventually to some $p_\ast(\gamma)$. If $\alpha=\kappa$, then our bookkeeping made sure that we have 
$$x=\bigcup_{\beta<\kappa}\supp(p_\beta)$$
so that $p_\ast$ is already fully defined and a legal play. If $\alpha<\kappa$ instead, then there are possibly $\gamma\in\bigcup_{\beta<\alpha}\supp(p_\beta)-x$, but then $\langle p_\beta(\gamma)\mid \beta<\alpha\rangle$ is a sequence of length ${<}\kappa$, so we may pick a lower bound $p_\ast(\gamma)\in\Add(\kappa, 1)$ for it.
\end{proof}

It remains to show \ref{saxacond2} and here we will use that $\diamondsuit_\kappa$ holds. Let $\langle d_\beta\mid \beta<\kappa\rangle$ be the ``$\diamondsuit_\kappa$-sequence for $\PP_{\leq\kappa}$" that appeared in the proof of Lemma \ref{AAforQ} and let $A$ be a maximal antichain in $\PP$. Choose $\tau$ to be a winning strategy for player II in $\mathcal G(\PP, \kappa+1,\mathcal P_\kappa(\kappa^+))$ and we will describe a strategy $\sigma$ for player I: Suppose $\alpha\leq\kappa$ and $p_\beta$, $x_\beta$ have already been played for $\beta<\alpha$. This time, we will have picked some surjections $s_\beta\colon\kappa\rightarrow x_\beta$ for $\beta<\alpha$ in the background. Let $x_{<\alpha}\coloneqq\bigcup_{\beta<\alpha}x_\beta$. Then, assuming there is a legal move, pick some $p_\alpha$ so that 

\begin{enumerate}[label=$(p_\alpha.\roman*)$]
    \item $p_\alpha\leq_{x_\beta} p_\beta$ for all $\beta<\alpha$ and
    \item\label{kapmanpalphacond2} if possible, $p_\alpha \res(\kappa^+\setminus x_{<\alpha})\cup e_\alpha\res x_{<\alpha}$ is below a condition in $A$
\end{enumerate}
where $e_\alpha$ is defined by 
$$e_\alpha(s_{\gamma_0}(\gamma_1))=d_\alpha(\gamma)$$
whenever $\gamma<\alpha$ and $h(\gamma)=(\gamma_0, \gamma_1)$ (and $e_\alpha$ is trivial where we did not specify a value)\footnote{$e_\alpha$ may fail to be a function, in which case \ref{kapmanpalphacond2} is void.}. \\
Let $\langle p_\alpha\mid\alpha\leq\kappa\rangle$, $\langle x_\alpha\mid\alpha<\kappa\rangle$ be the sequences of moves played by player I and II in a game where player I follows $\sigma$ and player II follows $\tau$. As $\tau$ is a winning strategy, the sequence must be of length $\kappa+1$. We will show that $q\coloneqq p_\kappa$ is compatible with at most $\kappa$-many elements of $A$. So let $a\in A$ and assume that $q$ is compatible with $a$.

\begin{claim}
There is $\alpha<\kappa$ so that $e_\alpha\in\PP$ and $e_\alpha\res x_{<\alpha}= a\res x_{<\alpha}$.
\end{claim}
\begin{proof}
We define $b\in\PP_{\leq\kappa}$ by $b(\gamma)=a(s_{\gamma_0}(\gamma_1))$ whenever $h(\gamma)=(\gamma_0,\gamma_1)$. Then there is $\alpha<\kappa$ with 
\begin{enumerate}[label=$(\alpha.\roman*)$]
    \item $b\res\alpha= d_\alpha$ and
    %\item $h[\alpha]=\alpha\times\alpha$ and
    \item $x_{<\alpha}=\{s_{\gamma_0}(\gamma_1)\mid\exists\gamma<\alpha\ h(\gamma)=(\gamma_0,\gamma_1)\}$.
\end{enumerate}
It is easy to see now that $\alpha$ is as desired. 
\end{proof}

Thus in round $\alpha\cdot 2$ in the game, player I tried to make sure that
$$a\res x_{<\alpha}\cup p_{\alpha}\res(\kappa^+\setminus x_{<\alpha})$$
is below some condition in $A$. This is possible for $a$, and only for $a$ as $q$ and $a$ are compatible.\\
We have shown that for any $a\in A$ that is compatible with $q$, there is $\alpha<\kappa$ such that $q\res (\kappa^+\setminus x_{<\alpha})\leq a\res (\kappa^+\setminus x_{<\alpha})$. As there are only ${\leq}\kappa$-many $r\in\PP$ with support contained in $x_{<\alpha}$, this implies that there are at most $\kappa$-many such $a\in A$.
\end{proof}

Lemma \ref{succnocollapse} follows from Lemma \ref{SAxAforPlemm} and Proposition \ref{saxaprop} similarly to how we proved Corollary \ref{omega1col}.
\begin{rem}
If additionally $\GCH$ holds at $\kappa^+$ then $\PP$ does not collapse any cardinals at all by a standard $\Delta$-system argument.
\end{rem}

\begin{proof}[Proof of Theorem \ref{succthm}.] Let $G$ be $\PP$-generic over $L$. By Lemma \ref{succnocollapse}, all $L$-cardinals $\leq\kappa^+$ are still cardinals in $L[G]$ (in fact, all cardinals are preserved). Let $N=\bigcap_{\alpha<\kappa^+} L[G_{>\alpha}]$. Using that $N$ is definable in every model of the form $L[G_{>\alpha}]$, it is easy to check that $N$ is a model of $\mathrm{ZF}$. Once again, we call $A\subseteq\kappa^+$ fresh if $A\cap\alpha\in L$ for all $\alpha<\kappa^+$.
\begin{claim}
$\mathcal{P}(\kappa^+)^{
\succman}=\mathcal{P}(\kappa^+)^N=\{A\subseteq\kappa^+\mid A\text{ is fresh}\}^{L[G]}$.
\end{claim}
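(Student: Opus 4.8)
The plan is to prove the three-way equality by establishing a chain of inclusions, mirroring the structure of the corresponding proposition in the $\omega_1$-mantle section (the proof of clause $(N.\mathrm{ii})$). First I would observe the easy inclusions $\mathcal{P}(\kappa^+)^{\succman}\subseteq \mathcal{P}(\kappa^+)^N$, which holds simply because every $L[G_{>\alpha}]$ for $\alpha<\kappa^+$ is a $\kappa^+$-ground of $L[G]$ (as $\PP_{\leq\alpha}$ has size ${\leq}\kappa^+$ in $L[G_{>\alpha}]$, and $\PP_{>\alpha}$ is ${<}\kappa^+$-closed, so in fact ${<}\kappa^+$-distributive, hence $L[G_{>\alpha}]$ and $L[G]$ have the same bounded subsets of $\kappa^+$ — actually one needs $L[G_{>\alpha}]\subseteq L[G]$ and that the quotient is small, which it is). Thus $\succman\subseteq N$ and the inclusion of powersets follows.

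Next I would show $\mathcal{P}(\kappa^+)^N\subseteq \{A\subseteq\kappa^+\mid A\text{ fresh}\}^{L[G]}$. Take $A\in N$, $A\subseteq\kappa^+$, and fix $\alpha<\kappa^+$. By the ``moreover'' part of Lemma \ref{succnocollapse}, the bounded set $A\cap\alpha$ (coded as a function $\kappa\to\Ord$, using that $|\alpha|\leq\kappa$ in $L[G]$ since $\kappa^+$ is preserved but $\alpha<\kappa^+$) lies in $L[G_{\leq\beta}]$ for some $\beta<\kappa^+$. But $A\in N\subseteq L[G_{>\beta}]$, so $A\cap\alpha\in L[G_{>\beta}]$ as well. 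Since $G_{\leq\beta}$ and $G_{>\beta}$ are mutually generic over $L$, Solovay's Fact \ref{solovayfact} gives $A\cap\alpha\in L[G_{\leq\beta}]\cap L[G_{>\beta}]=L$. As $\alpha$ was arbitrary, $A$ is fresh.

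Finally I would close the loop with $\{A\subseteq\kappa^+\mid A\text{ fresh}\}^{L[G]}\subseteq\mathcal{P}(\kappa^+)^{\succman}$. Let $A\in L[G]$ be fresh and let $W$ be any $\kappa^+$-ground of $L[G]$, say $W$ extends to $L[G]$ via a forcing $\QQ\in W$ with $W\models|\QQ|\leq\kappa$. Then $W\subseteq L[G]$ has the $\kappa^+$-approximation property (cf.\ \cite{fhrstg}). Every $\kappa^+$-approximation $A\cap y$ with $y\in W$, $|y|^W\leq\kappa^+$, can be split as a ${\leq}\kappa^+$-union of sets of the form $A\cap\beta$ for $\beta<\kappa^+$ — more carefully, one uses that a forcing of size ${\leq}\kappa$ cannot add a fresh subset of $\kappa^+$ (a standard argument: a name for a fresh set plus a condition gives, by a ${\leq}\kappa$-sized antichain analysis, that each initial segment is already in the ground, and then by ${\leq}\kappa$-many choices the whole set is determined in $W$ — alternatively invoke the approximation property directly, noting every $\kappa$-sized approximation $A\cap y$ is bounded or reduces to bounded approximations which are in $L\subseteq W$). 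Hence $A\in W$, and since $W$ was arbitrary, $A\in\succman$.

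The main obstacle I anticipate is the last inclusion: one must argue cleanly that a forcing of $W$-size ${\leq}\kappa$ (equivalently ${\leq}\kappa^+$, but genuinely the relevant bound) cannot introduce a fresh subset of $\kappa^+$ that is not already in $W$. The cleanest route is probably to invoke the $\kappa^+$-approximation property together with the observation that for a fresh set $A$, every approximation $A\cap y$ with $y\in W$ of size ${\leq}\kappa^+$ is computable from $\langle A\cap\beta\mid\beta<\kappa^+\rangle\in L\subseteq W$, so all approximations lie in $W$ and freshness upgrades to membership; the subtlety is that $|y|^W\leq\kappa^+$ rather than ${\leq}\kappa$, so one should double-check that $\kappa^+$-grounds satisfy the $\kappa^+$-approximation property in the form needed (they do, since a ${<}\kappa^+$-sized forcing has the $\kappa^+$-approximation and cover properties by Hamkins' lemma). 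This matches exactly the remark made just before the $\omega_1$-case proposition that ``$\QQ$ cannot add a fresh subset of $\kappa$'', adapted to $\kappa^+$.
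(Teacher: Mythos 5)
Your proposal follows the paper's proof essentially step for step: the same three-inclusion cycle, with the ``moreover'' clause of Lemma \ref{succnocollapse} plus Solovay's intersection Fact \ref{solovayfact} giving $\mathcal P(\kappa^+)^N\subseteq\{A\mid A\text{ fresh}\}$, and freshness plus the $\kappa^+$-approximation property giving the inclusion back into $\succman$. Your final resolution of the approximation-size worry is the correct one and is exactly what the paper uses: the relevant approximations are $A\cap y$ for $y\in W$ of size ${\leq}\kappa$; since $\kappa^+$ is preserved, such a $y\subseteq\kappa^+$ is bounded, so $A\cap y=(A\cap\beta)\cap y\in L\subseteq W$ for suitable $\beta<\kappa^+$. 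Your earlier suggestion of splitting a $\kappa^+$-sized $y$ into $\kappa^+$-many bounded pieces should be dropped: if $y=\kappa^+$ were an admissible approximation the argument would be circular, and the union of $\kappa^+$-many sets from $W$ indexed in $L[G]$ need not lie in $W$ anyway.

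The one place your write-up goes wrong is the parenthetical justifying the ``easy'' inclusion $\mathcal P(\kappa^+)^{\succman}\subseteq\mathcal P(\kappa^+)^N$ (which the paper simply calls trivial). The forcing $\PP_{>\alpha}$ is only ${<}\kappa$-closed, not ${<}\kappa^+$-closed or ${<}\kappa^+$-distributive --- each factor $\Add(\kappa,1)$ adds a new function $\kappa\to 2$, and a descending $\kappa$-sequence has no coordinatewise lower bound since unions of conditions in $\Add(\kappa,1)$ may reach size $\kappa$. Moreover, ``$\PP_{\leq\alpha}$ has size ${\leq}\kappa^+$'' does not certify that $L[G_{>\alpha}]$ is a $\kappa^+$-ground: one needs size ${<}\kappa^+$, i.e.\ ${\leq}\kappa$. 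Under $\GCH$ this holds for $\alpha<\kappa$ (where $|\PP_{\leq\alpha}|=\kappa^{|\alpha|+1}=\kappa$), but for $\kappa\leq\alpha<\kappa^+$ the full-support product has size $\kappa^{\kappa}=\kappa^+$ and has no dense subset of size $\kappa$ (diagonalize against any $\kappa$-many conditions coordinate by coordinate). So for those $\alpha$ the claim that $L[G_{>\alpha}]$ is a $\kappa^+$-ground requires an argument you have not supplied; you should either provide one or note explicitly that only the inclusion $\succman\cap\mathcal P(\kappa^+)\subseteq L[G_{>\alpha}]$ for the relevant $\alpha$ is being used and justify it by other means, rather than cover the gap with a false closure claim.
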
 
\begin{proof}
$\mathcal{P}(\kappa^+)^{\succman}\subseteq\mathcal{P}(\kappa^+)^N$ is trivial. Suppose $A\subseteq\kappa^+, A\in N$. Given $\alpha<\kappa^+$, by Lemma \ref{succnocollapse}, there is $\beta<\kappa^+$ so that $A\cap\alpha\in L[G_{\leq\beta}]$ so that 
$$A\cap\alpha\in L[G_{\leq\beta}]\cap L[G_{>\beta}]=L$$
by Fact \ref{solovayfact}. For the last inclusion assume $A\in L[G]$ is a fresh subset of $\kappa^+$ and $W$ is any $\kappa^+$-ground of $L[G]$. It follows that $W\subseteq L[G]$ satisfies the $\kappa^+$-approximation property so that $A\in W$ as any bounded subset of $A$ is in $L\subseteq W$.
\end{proof}
We will show that there is no wellorder of $\mathcal{P}(\kappa^+)^{\succman}$ in $\succman$. So assume otherwise. Let $\sim$ be the equivalence relation of eventual coincidence on ${}^{\kappa^+}2$ in $N$.  We can realise $G$ as a matrix where the $\alpha$-th row is $\Add(\kappa, 1)$-generic over $L$. Now the columns are in fact $\Add(\kappa^+, 1)$-generic over $L$. Let us write $c_\alpha$ for the $\alpha$-th column ($\alpha<\kappa^+)$ and $d_\beta$ for the $\beta$-th row ($\beta<\kappa$). For any $\alpha<\kappa^+$ we have that $\langle d_\beta\res[\alpha, \kappa^+)\mid\beta<\kappa\rangle\in L[G_{
>\alpha}]$. Thus 
$$\langle [d_\beta]_\sim\mid\beta<\kappa\rangle\in N$$
and by our assumption there must be a choice function, say $\langle x_\beta\mid\beta<\kappa\rangle$, in $N$. In $L[G]$, we can define the sequence $\langle \delta_\beta\mid\beta<\kappa\rangle$, where $\delta_\beta$ is the least point after which $x_\beta$ and $d_\beta$ coincide. As $\kappa^+$ is not collapsed by $\PP$, we can strictly bound all $\delta_\beta$ by some $\delta_\ast<\kappa^+$. But then 
$$\langle x_\beta(\delta_\ast)\mid\beta<\kappa\rangle\in N$$
is $\Add(\kappa, 1)$-generic over $L$, which contradicts that $N$ and $L$ have the same subsets of $\kappa$.
\end{proof}

\iffalse

\begin{rem}
In this case, one can show that the $L[G_{[\alpha, \kappa^+)}]$, $\alpha<\kappa^+$ do not form a dense ``set" of $\kappa^+$-small grounds. For example, we can modify $G_{[1, \kappa^+)}$ by adding the generic sequence of the first factor onto every row mod 2, call this $H$. Then $L[H]$ is a $\kappa^+$-ground of $L[G]$ in which no $L[G_{[\alpha, \kappa^+)}]$ is included. It also follows that $\succman\subsetneq N$ as 
$$\langle[d_\beta]_\sim\mid\beta<\kappa\rangle\in N\setminus\succman$$
It would be interesting to see a better description of $\succman$. For example, is 
$$\succman=L(\mathcal{P}(\kappa^+)^{\succman})$$
true? Is $\succman$ even a model of $\ZF$?
\end{rem}
\fi
Note that Fact \ref{stronglimitzffact} does not apply in the situation here, so we may ask:
\begin{que}
Is $\succman$ a model of $\mathrm{ZF}$? Is $\succman=N$?
\end{que}

\section{Conclusion}

There are a number of open questions regarding the interplay between large cardinal properties of $\kappa$ and the $\kappa$-mantle. The following table summarizes what is known as presented in the introduction.\\

\begin{tabular}{l|l}
Large cardinal property of $\kappa$ & Theory of $\kapman$ extends...\\
\hline
 extendible   &  $\ZFC+\mathrm{GA}$\\
measurable & $\ZFC$\\
weakly compact & $\ZF+\kappa\text{-}\mathrm{DC}$\\
inaccessible & $\ZF+(<\kappa, H_{\kappa^+})\text{-choice}$\\
\end{tabular}\\

There is certainly much more to discover here. How optimal are these results? Optimality has only been proven for one of them, namely the first. This is due to Gabriel Goldberg.

\begin{fact}[Goldberg, \cite{gabeextendible}]
Suppose $\kappa$ is an extendible cardinal. Then there is a class forcing extension in which $\kappa$ remains extendible and $\kapman$ is not a $\kappa$-ground. In particular, if $\lambda<\kappa$ and $\mantle_\lambda\models\ZFC$ then $\mantle_\lambda$ has a nontrivial ground.
\end{fact}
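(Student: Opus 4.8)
The plan is to reduce everything to producing a model of $\ZFC$ in which $\kappa$ is extendible and the $\kappa$-grounds have no $\subseteq$-least member; since $\kapman$ lies inside every $\kappa$-ground, it then cannot be one. (By Usuba's Fact~\ref{usubaext}, $\kapman=\mantle$ in such a model, so this is just forcing the failure of the Bedrock Axiom while keeping $\kappa$ extendible.) I would start from a model $V$ of $\ZFC$ in which $\kappa$ is extendible and the Ground Axiom holds — obtained from an arbitrary model with an extendible cardinal by a preliminary class-forcing preparation, whose extendibility-preservation is itself part of the work — and then force with the class product
$$\PP=\prod_{\delta\in I}^{{<}\kappa\text{-support}}\Add(\delta^{+},1),$$
where $I$ is the class of inaccessibles; let $G$ be generic and $V^{\ast}=V[G]$. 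That $V^{\ast}\models\ZFC$ and that cardinals are preserved is routine bookkeeping of the kind around Lemma~\ref{sameinaccessibleslemm}.

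I would then transfer the analysis of Section~\ref{kappamahlosection} to $V$, using the Ground Axiom of $V$ exactly where that section uses that $L$ has no nontrivial ground. For $\lambda<\kappa$ the model $W_{\lambda}:=V[G\res(I\setminus\lambda)]$ arises from $V^{\ast}$ by the set forcing $\prod_{\delta\in I\cap\lambda}\Add(\delta^{+},1)$, of size ${<}\kappa$ by inaccessibility of $\kappa$, so each $W_{\lambda}$ is a $\kappa$-ground; moreover the $W_{\lambda}$ are $\subseteq$-decreasing, strictly so whenever a new inaccessible is passed. Conversely, given a $\kappa$-ground $W$, fix an inaccessible $\lambda_{0}<\kappa$ with $W$ a $\lambda_{0}$-ground, so $W$ has the $\lambda_{0}$-approximation property in $V^{\ast}$; one first notes $V\subseteq W$ (because $V$ has no nontrivial ground, Usuba's downward directedness \cite{usuddg} and the intermediate-model theorem force $V$ and $W$ to share a common ground already inside $V[G\res(I\cap\delta)]$ for a suitable $\delta<\kappa$, which must then be $V$ itself), after which the proof of Lemma~\ref{nicedescrip} applies unchanged: any $\lambda_{0}$-approximation of the matrix coding $G\res(I\setminus\lambda_{0})$ has bounded columns and ${<}\kappa$-many rows, so it literally is a condition of $\PP\subseteq V\subseteq W$, whence $W_{\lambda_{0}}\subseteq W$. (This is the Jensen-covering-free variant flagged in the remark after Lemma~\ref{nicedescrip}.) Consequently $\kapman^{V^{\ast}}=\bigcap_{\lambda<\kappa}W_{\lambda}=V[G\res(I\setminus\kappa)]$; were this a $\kappa$-ground it would contain some $W_{\lambda}$ while lying inside every $W_{\nu}$, forcing $W_{\lambda}=W_{\nu}$ for all $\nu\in[\lambda,\kappa)$, which fails past the next inaccessible. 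The same computation run below $\lambda$ in place of $\kappa$ gives $\mantle_{\lambda}=V[G\res(I\setminus\lambda)]$ for every $\lambda<\kappa$, a tame class-forcing extension of $V$ and hence a model of $\ZFC$, which has the nontrivial set-forcing ground $W_{\mu}$ for any inaccessible $\mu$ admitting an inaccessible in $[\lambda,\mu)$ — this is the second assertion.

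The genuinely hard step is keeping $\kappa$ extendible, both through the preparation and through $\PP$; the obstruction is that $\PP\res\kappa$ has size $\kappa$, so extendibility is not preserved just because the remaining forcing is small. I would lift embeddings: given $\eta>\kappa$, take $j\colon V_{\eta}\to V_{\theta}$ with $\crit(j)=\kappa$ and $j(\kappa)>\eta$; every condition of $\PP\res\kappa$ has hereditary size ${<}\kappa=\crit(j)$, so $j$ fixes $\PP\res\kappa$ pointwise and $G\res\kappa$ lifts for free, while $j[G\res[\kappa,\eta)]$ is absorbed by a master condition $q^{\ast}$ supported on $j[I\cap[\kappa,\eta)]\subseteq[j(\kappa),j(\eta))$, a legitimate condition because that support has size ${\le}|\eta|<j(\kappa)$. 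One then needs, in $V^{\ast}$, a $\PP\res j(\eta)$-generic over $V_{\theta}$ below $q^{\ast}$ and compatible with $j[G\res[\kappa,\eta)]$; since $\PP\res[\kappa,j(\eta))$ is only ${<}\kappa^{+}$-closed, a bare master-condition argument cannot reach the ${\approx}|V_{\theta}|$-many dense sets, so instead one performs surgery on $G\res[j(\kappa),j(\eta))$, altering it on the Easton-small coordinate set $j[I\cap[\kappa,\eta)]$ to meet $q^{\ast}$ and checking that genericity over $V_{\theta}$ survives. Doing this uniformly in $\eta$, and carrying out the analogous lifting through the $\mathrm{GA}$ preparation, is the technical heart of the proof; everything else is a routine relativisation of Section~\ref{kappamahlosection}.
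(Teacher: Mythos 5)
This statement is an external result quoted from Goldberg's paper \cite{gabeextendible}; the present paper gives no proof of it, so your attempt has to stand on its own, and as written it has a genuine gap exactly at what you yourself call the technical heart: you never prove that $\kappa$ stays extendible, neither through the GA preparation nor through $\PP$. Your sketch concedes that a master-condition argument cannot meet the $|V_\theta|$-many dense sets and defers to an unverified ``surgery'' on $G\res[j(\kappa),j(\eta))$; but the genericity check is precisely the hard point. On an image coordinate $j(\delta)$ you must overwrite the $\Add(j(\delta)^+,1)$-generic on the set $j[\delta^+]$, and a single condition of $\Add(j(\delta)^+,1)$ has size up to $j(\delta)$ and so may meet the altered region in a set of full size $\delta^+$; the usual surgery lemma, which needs each condition to see only a small piece of the alteration, does not apply, and no substitute is offered. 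Since the whole statement is about preserving extendibility, nothing is established without this step.

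Even granting preservation, two further steps are wrong or unsupported. First, the identity $\kapman^{V^{\ast}}=\bigcap_{\lambda<\kappa}W_{\lambda}=V[G\res(I\setminus\kappa)]$ is false for your forcing, by the very mechanism of Section \ref{kappamahlosection}: the ``columns'' $c_\beta$ with $c_\beta(\delta)=g_\delta(\beta)$ are fresh (each initial segment is decided by a single condition, hence lies in $V$), so $c_\beta\in W_\lambda$ for every $\lambda<\kappa$, yet $c_\beta\notin V$ and hence $c_\beta\notin V[G\res(I\setminus\kappa)]$ by mutual genericity (Fact \ref{solovayfact}); the intersection strictly exceeds the tail model. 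Your main conclusion can be repaired without this identity (every $\kappa$-ground contains some $W_\lambda$, and the $W_\lambda$ are strictly decreasing, so no $\kappa$-ground can lie in all $\kappa$-grounds), but your derivation of the ``in particular'' clause rests on the analogous false identity $\mantle_\lambda=V[G\res(I\setminus\lambda)]$ and so does not go through as written. Second, the inclusion $V\subseteq W$ for every $\kappa$-ground $W$ of $V^{\ast}$ — which both Lemma \ref{nicedescrip}-style arguments need, since unlike $L$ your prepared model is not automatically contained in $W$ — is justified by invoking Usuba's downward directedness between $V$ and $W$; but $V$ is only a class-forcing ground of $V^{\ast}$, not a set-forcing ground, so the DDG and the ground-definability machinery do not apply to it directly, and this inclusion needs its own (local, approximation-and-cover style) argument.
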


The most interesting question seems to be up to when exactly the axiom of choice can fail to hold in $\kapman$. Since this can happen at a Mahlo cardinal, the natural  next test question is whether this is possible at a weakly compact cardinal.

\begin{que}
Suppose that $\kappa$ is weakly compact. Must $\kapman\models\ZFC$?
\end{que}

\bibliographystyle{alpha}
\bibliography{bib.bib}

\end{document}